\newtheorem{thm}{Theorem}[section]
\theoremstyle{definition}
\theoremstyle{theorem}
\newtheorem{prop}[thm]{Proposition}
\newtheorem{lem}[thm]{Lemma}
\theoremstyle{remark}
\theoremstyle{definition}
\newtheorem{step}{Step}[]
\def\Ker{\mathop{\mathrm{Ker}}\nolimits}
\def\Hilb{{\mathrm{Hilb}}}
\def\<{{\langle}}
\def\>{{\rangle}}
\newcommand{\mf}[1]{{\mathfrak{#1}}}
\newcommand{\bb}[1]{{\mathbb{#1}}}
\newcommand{\mca}[1]{{\mathcal{#1}}}
\title{On the birational geometry for irreducible symplectic 4-folds related to the Fano schemes of lines}
\author{Kotaro Kawatani\footnote{kawatani@cr.math.sci.osaka-u.ac.jp}}
\date{Osaka University}
\begin{document}

\maketitle

\section{Introduction} 

A compact K\"{a}hler manifold $X$ is said to be an irreducible symplectic manifold 
when $X$ is a simply connected and $H^0(X, \Omega ^2_{X})$ is generated by 
an everywhere non-degenerate holomorphic 2-form $\sigma _X$. 
We call such a holomorphic 2-form $\sigma _X$ a symplectic form. 
Let $Y$ be a smooth cubic 4-fold and $F(Y)$ the set of all lines contained in $Y$. 
$F(Y)$ is said to be the Fano schemes of lines. 
Beauville and Donagi \cite{B-D} showed that $F(Y)$ is an irreducible symplectic 4-fold. 
Examples of irreducible symplectic manifolds are few. 
Most of them are constructed as the moduli spaces of stable sheaves on K3 surfaces or Abelian surfaces.

Assume that a finite group $G$ acts on $F(Y)$. 
If $G$ preserves a symplectic form $\sigma _{F(Y)}$ then there exists a symplectic form $\bar \sigma $ on the smooth locus of $X_{G}:=F(Y)/G$. 
In general $X_{G}$ has singular points. 
Let $\nu : \tilde X_{G} \to X_{G}$ be a resolution of $X_{G}$. 
The symplectic 2-form $\bar \sigma$ lifts to an everywhere non-degenerate 2-form on $\tilde X_{G}$
 if and only if the resolution $\nu$ is crepant. 
If $\nu$ is crepant, then $\tilde X_{G}$ is also an irreducible symplectic 4-fold. 
The $G$-action on $F(Y)$ is said to be a good action 
when $G$ preserves $\sigma _{F(Y)}$ and $X_{G}$ has a crepant resolution. 
Can we find good actions on $F(Y)$? 
When it is possible, what is $\tilde X_{G}$ ?

In this paper, 
we assume that $G$ is a subgroup of the automorphism group $\mathit{PGL}(5)$ of $\bb P^5$ and 
$G$ preserves $Y$. 
We study two examples of good actions on $F(Y)$. 
In the first example, the resolution $\tilde{X_{G}} $ is birational to 
the generalized Kummer variety $K^2(A) $ of an Abelian surface $A$. 
In the second one, $\tilde{X_{G}}$ is birational to the 2-points Hilbert scheme $\Hilb^2(S)$ of a K3 surface $S$. 
We have to investigate the $G$-action on symplectic forms on ${F(Y)}$. 
The Abel-Jacobi map defines a Hodge isomorphism of type $(-1,-1)$ from $H^4(Y, \bb C)$ to $H^2(F(Y),\bb C)$. 
Using the identification with $H^0(F(Y),\Omega ^2_{F(Y)})$ and $H^{1}(Y,\Omega_Y^3)$, 
we can check whether $G$ preserves the symplectic form or not (see Lemma \ref{criterion}).

In Section $3$, we consider the first example, which was found by Namikawa (\cite{Nam}, 1.7.(iv)). 
Let $Y$ be a smooth cubic 4-fold defined by $$\{ (z_0:\cdots :z_5) \in \bb P^5  | f(z_0,z_1,z_2) + g(z_3,z_4,z_5)=0 \},$$ 
where $f$ and $g$ are homogeneous cubic polynomials. 
In addition, 
we consider two elliptic curves defined by $ C := \{ (z_0:z_1:z_2) \in \bb P^2 | f(z_0,z_1,z_2)=0 \}$ and 
$ D := \{ (z_3:z_4:z_5) \in \bb P^2 | g(z_3,z_4,z_5)=0 \}$. 
Assume that $G$ is the cyclic group $\bb Z_3$ of order $3$. 
The group $G$ acts on $Y$ in the following way :
\[
\bb Z_3 {}^{\curvearrowright} Y\ni (z_0: \cdots :z_5) \mapsto (z_0:z_1:z_2:\zeta z_3: \zeta z_4:\zeta z_5)
\] 
where $\zeta = \exp(2\pi \sqrt{-1}/3)$. 
$X_{\bb Z_3}$ has singularities of type $A_2$ along $C \times D$ and is smooth otherwise. 
So $X_{\bb Z_3}$ has a crepant resolution $\tilde X_{\bb Z_3}$ (Lemma \ref{crepant}). 
$\tilde X_{\bb Z_3}$ is birational to $K^2(C \times D)$ for the elliptic curves $C$ and $D$ (Proposition \ref{mainprop}). 
There is a natural birational map $\psi:\tilde X_{\bb Z_3} \dashrightarrow  K^2 ( C \times D ) $ 
which is the Mukai flop on disjoint 18 copies of $\bb P^2$ (Theorem 3.5). 
Furthermore when $C$ and $D$ are not isogenus, we found 2 more symplectic birational models $X_{\mathrm{I}}$ and $X_{\mathrm{II}}$ other than $\tilde X_{\bb Z_3}$ and $K^2(C \times D)$ (Theorem \ref{mainthm2}).

In Section 4, we consider the second example. 
Let $Y$ be a smooth cubic 4-fold defined by $\{ (z_0: \cdots :z_5) \in \bb P^5 | z_0^3 + \cdots + z_5^3  =0\}$. 
Put  
\[
M =   \begin{Bmatrix}  A=\begin{pmatrix} a_1 &  &  \\  & \ddots & \\ & &  a_6\end{pmatrix} \in \mathit{PGL}(5)  |  \#\{ a_i=1 \} = \# \{j | a_j=\zeta  \} =3 \end{Bmatrix}.
\] 	
Let $G$ be the finite group generated by $M$. 
Then $G$ is isomorphic to $\bb Z_3^{\oplus 4}$ and the $G$-action on $F(Y)$ turns out to be good. 
Let $E$ be the elliptic curve defined by $\{ (z_0:z_1:z_2) \in \bb P^2 | z_0^3+z_1^3+z_2^3=0 \}$. 
Since $E$ has a complex multiplication $\zeta \curvearrowright E \ni x \mapsto \zeta x$, 
the Abelian surface $E \times E$ has the $\bb Z_3$ action given by $\bb Z_3 \curvearrowright E \times E \ni (x,y) \mapsto (\zeta x,\zeta ^2 y) \in E \times E$. 
This $\bb Z_3 $ action on $E \times E$ preserves a symplectic form on $E \times E$, 
and $(E \times E)/\bb Z_3$ has a crepant resolution $S \to (E \times E)/\bb Z_3$. In particular $S$ is a K3 surface. 
We shall prove that $\tilde{X_{G}}$ is birational to $\Hilb ^2(S) $ (Theorem 4.2). 
\vspace{1pt}

\noindent
\textbf{Acknowledgment}.
The author would like to thank Professor Namikawa for his advice. 

\section{Preliminaries}

In this section we prepare some facts needed later. 

\subsection{On irreducible symplectic manifolds}
Let $A$ be an Abelian surface and $\Hilb^{n+1}(A)$ the Hilbert scheme of $(n+1)$-points on $A$. 
We define 
\[
\mathrm{Sym}^{n+1}(A) := A^{n+1}/\mathfrak{S}_{n+1}
\]
where $\mathfrak{S}_{n+1}$ is the $(n+1)$-th symmetric group. 
Let $\mu$ be the Hilbert-Chow morphism:
\[
\mu: \Hilb^{n+1}(A) \to \mathrm{Sym}^{n+1}(A).
\] 
We define the map $\Sigma: \mathrm{Sym}^{n+1}(A) \to  A$ by $\Sigma  (\{ x_1, \cdots x_{n+1}\}):=\sum_{i=1}^{n+1} x_i$ and 
 consider the composite $\Sigma \circ \mu$.  
We define $:$
\[
K^n (A) := (\Sigma \circ \mu)^{-1}(0).
\] 
We call $K^n (A)$ \textit{the generalized Kummer variety of $A$}. 
$K^n(A)$ is an irreducible symplectic manifold (\cite{Bea}). 
We also define $\bar K ^n(A):=\Sigma ^{-1}(0)$. 
$\bar K ^n(A)$ is isomorphic to 
\[
\bar K ^n(A) \cong \{ (x_1, \cdots , x_{n+1}) \in A ^{n+1}| x_1 + \cdots + x_{n+1}=0   \}/ \mathfrak{S}_{n+1}.
\] 

For an elliptic curve $C$, we define  
\[
K^2(C) :=  \bigr\{ \{ p_1,p_2,p_3 \} \in \Hilb ^3(C) | p_1+p_2+p_3=0  \bigr\}.
\]
Notice that $K^2(C)$ is isomorphic to $\bb P^2$.

\subsection{Fano schemes}

We collect some facts for Fano schemes needed later. 
Let $Y$ be a smooth cubic 4-fold. Define
$$F(Y):=\{ l \subset Y | l \cong \bb P^1 , \deg l=1 \}. $$ 
$F(Y)$ is known to be a smooth projective variety of dimension 4. 
Furthermore we have the following theorem. 

\begin{prop}[Beauville-Donagi\cite{B-D}]
The notation being as above, $F(Y)$ is an irreducible symplectic manifold. 
$F(Y)$ is deformation equivalent to the Hilbert scheme $\Hilb ^2(S) $ of 2-point on a K3 surface $S$. 
\end{prop}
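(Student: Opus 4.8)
The plan is to deduce both assertions from a single special case together with a deformation argument, since being an irreducible symplectic manifold is invariant under deformation of compact K\"ahler manifolds. First I would record that the smooth cubic 4-folds are parametrized by the complement $U$ of the discriminant hypersurface inside $\bb P(H^0(\bb P^5,\mca O_{\bb P^5}(3)))$. The discriminant is an irreducible hypersurface, so $U$ is connected. Over $U$ the Fano schemes fit together into a smooth proper family $\mca F \to U$ whose fibres are the $F(Y)$; by Ehresmann's theorem all fibres are diffeomorphic, and as a family of compact K\"ahler manifolds they are all deformation equivalent to one another. Thus it suffices to identify $F(Y)$ with $\Hilb^2(S)$ for one well-chosen smooth cubic 4-fold $Y$ and one K3 surface $S$.

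For the distinguished member I would take a Pfaffian cubic. Fixing a $6$-dimensional vector space $W$, the locus of skew forms of rank $\le 4$ is a cubic hypersurface $\mathrm{Pf}\subset \bb P(\wedge^2 W^\vee)\cong \bb P^{14}$. For a generic linear subspace $L\cong \bb P^5$ the intersection $Y:=\mathrm{Pf}\cap L$ is a smooth cubic 4-fold, and the orthogonal subspace $L^\perp\subset \bb P(\wedge^2 W)\cong\bb P^{14}$ meets the Grassmannian $\mathrm{Gr}(2,W)$, in its Pl\"ucker embedding, transversally in a K3 surface $S$ of degree $14$. The geometric heart of the argument is to exhibit a correspondence between lines $l\subset Y$ and length-$2$ subschemes of $S$: to a general line one associates the pair of points of $S$ cut out by a suitable $2$-plane through $l$, and one verifies that this assignment extends to an isomorphism $F(Y)\xrightarrow{\ \sim\ }\Hilb^2(S)$ by a careful analysis of the rank stratification of $\mathrm{Pf}$ along $L$ and of the secant geometry of $S\subset \mathrm{Gr}(2,W)$.

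Granting this isomorphism, I would invoke the fact (\cite{Bea}) that the Hilbert scheme $\Hilb^2(S)$ of a K3 surface is an irreducible symplectic 4-fold. For the Pfaffian member we then have $F(Y)\cong \Hilb^2(S)$, and by the family argument every $F(Y)$ is deformation equivalent to this $\Hilb^2(S)$. Deformation invariance of the defining properties finishes the proof: simple connectedness is preserved under diffeomorphism, while constancy of Hodge numbers in a family of compact K\"ahler manifolds gives $h^{2,0}(F(Y))=h^{2,0}(\Hilb^2(S))=1$, and the symplectic form deforms to a symplectic form, so its generator remains everywhere non-degenerate.

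I expect the main obstacle to be the explicit identification $F(Y)\cong \Hilb^2(S)$ for the Pfaffian cubic. This is not a formal consequence of the deformation setup: it requires understanding precisely which $2$-planes meet $Y$ along a line, how such planes correspond to pairs of rank-$2$ forms, i.e.\ points of $S$, and why degenerate configurations (secant lines, non-reduced subschemes) match the boundary behaviour of the Hilbert--Chow morphism $\mu$. As an independent check on the $2$-form itself, one may note that the Abel--Jacobi map gives a Hodge isomorphism of type $(-1,-1)$ sending the $1$-dimensional space $H^{3,1}(Y)$ into $H^{2,0}(F(Y))$, which already forces $h^{2,0}(F(Y))\ge 1$ and produces the symplectic form used throughout this paper.
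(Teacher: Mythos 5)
The paper does not prove this proposition at all: it is quoted as a known theorem of Beauville--Donagi \cite{B-D}, so there is no internal argument to measure yours against. What you have written is in fact a faithful outline of the \emph{original} proof in \cite{B-D}: connectedness of the parameter space of smooth cubics, the smooth proper family of Fano schemes with deformation-equivalent fibres, the Pfaffian cubic $Y=\mathrm{Pf}\cap L$ with its associated degree-$14$ K3 surface $S=L^{\perp}\cap\mathrm{Gr}(2,W)$, and the isomorphism $F(Y)\cong\Hilb^{2}(S)$ for that distinguished member, followed by deformation invariance. So the route is the right one, and it is the only known route. Two caveats. First, essentially all of the mathematical content sits in the step you explicitly defer, namely constructing and verifying the isomorphism $F(Y)\cong\Hilb^{2}(S)$ for the Pfaffian cubic (identifying which $2$-planes $U\subset W$ isotropic for every form on a line $l\subset Y$ give the pair of points of $S$ attached to $l$, and checking the extension over secant and non-reduced configurations); as written this is a statement of intent, which you acknowledge, so the proposal is a correct plan rather than a proof. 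Second, the closing deformation step is glossed a little too quickly: constancy of $h^{2,0}$ and invariance of $\pi_{1}$ do not by themselves guarantee that the generator of $H^{0}(\Omega^{2})$ stays \emph{everywhere} non-degenerate along the whole connected family --- openness of the irreducible-symplectic locus is Beauville's stability under small deformations \cite{Bea}, but one also needs closedness (e.g.\ via triviality of the canonical bundle, whose first Chern class is locally constant in the family, together with positivity of $\int\sigma^{2}\bar\sigma^{2}$) to propagate the property across all of $U$. Your final remark that the Abel--Jacobi map independently produces the $2$-form via $H^{3,1}(Y)\to H^{2,0}(F(Y))$ is exactly the mechanism the paper relies on (Lemma \ref{Abel-Jacobi}), and is a good sanity check.
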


The following relation between $H^{4}(Y,\bb C)$ and $H^2(F(Y),\bb C)$ is important to us. 

\begin{lem}[\cite{B-D}]\label{Abel-Jacobi}
Let $\mca L $ be the universal family of lines$:$
$$\mca L := \{ (l,y) \in F(Y) \times Y | l \ni y \}.$$ 
Let $p$ and $q$ be the projections from $\mca L$ to $F(Y)$ and $Y$ respectively. 
We define the Abel-Jacobi map $\alpha $ by
\[
\alpha : H ^4(Y, \bb C) \to H^2(F(Y) , \bb C),\ \alpha (\omega  ):= p_*q^*(\omega ). 
\]
Then $\alpha $ is a Hodge isomorphism of type $(-1,-1)$. 
\end{lem}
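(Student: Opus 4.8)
The plan is to recognize $\alpha$ as the action on cohomology of the algebraic cycle $[\mca L] \subset F(Y) \times Y$, which makes it automatically a morphism of Hodge structures; the only real content is to pin down its bidegree and to prove bijectivity. For the bidegree, observe that $p : \mca L \to F(Y)$ is a $\bb P^1$-bundle (the fibre over $l \in F(Y)$ is the line $l$ itself), hence proper of relative dimension $1$. The pullback $q^*$ preserves Hodge type, while the Gysin pushforward $p_*$ along a proper morphism of relative dimension $1$ lowers cohomological degree by $2$ and shifts Hodge bidegree by $(-1,-1)$. Therefore $\alpha = p_* q^* : H^4(Y,\bb C) \to H^2(F(Y),\bb C)$ is of type $(-1,-1)$, as claimed. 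This part is formal.

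It remains to prove that $\alpha$ is an isomorphism, and the first step I would take is to match Hodge numbers. A smooth cubic $4$-fold $Y$ has $h^{3,1}(Y)=h^{1,3}(Y)=1$ and $h^{2,2}(Y)=21$, so $b_4(Y)=23$; the irreducible symplectic $4$-fold $F(Y)$, being deformation equivalent to $\Hilb^2$ of a K3 surface, has $h^{2,0}=h^{0,2}=1$ and $h^{1,1}=21$, so $b_2(F(Y))=23$. Under the shift by $(-1,-1)$ these numbers agree in every bidegree, namely $H^{3,1}(Y)\to H^{2,0}$, $H^{2,2}(Y)\to H^{1,1}$ and $H^{1,3}(Y)\to H^{0,2}$. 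Consequently $\alpha$ is an isomorphism as soon as it is injective, equivalently surjective.

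For bijectivity I would run a deformation argument. The whole construction of $\mca L$, $p$, $q$, and hence $\alpha$, globalizes over the parameter space $U$ of smooth cubic $4$-folds, which is an open, connected subset of $\bb P(H^0(\bb P^5,\mca O(3)))$: one gets relative families $f$ and $g$ together with a relative universal line, producing a morphism of local systems $R^4 f_* \bb C \to R^2 g_* \bb C$ underlying a morphism of variations of Hodge structure whose fibrewise value is $\alpha$. A morphism of local systems over a connected base has locally constant rank, so it is enough to verify that $\alpha$ is an isomorphism at a single cubic. Here I would specialize to a Pfaffian cubic $Y$, for which there is a degree-$14$ K3 surface $S$ with an explicit isomorphism $F(Y)\cong \Hilb^2(S)$; on $\Hilb^2(S)$ the Hodge structure of $H^2$ and its comparison with $H^4(Y)$ can be computed directly, yielding bijectivity at that point and therefore everywhere on $U$.

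The substantive difficulty is precisely this bijectivity: the type $(-1,-1)$ statement follows instantly from the correspondence, but showing that $\alpha$ has no kernel requires either the deformation route above or a direct intersection-theoretic computation. The main obstacle is producing one cubic at which the identity can be checked concretely, and the specialization to a Pfaffian cubic with $F(Y)\cong\Hilb^2(S)$ is what makes this tractable, trading an abstract assertion for the well-understood Hodge theory of a Hilbert scheme of a K3 surface. An alternative that avoids specialization would be to analyze the transpose correspondence $q_* p^*$ and bound the rank of $\alpha$ via intersection numbers on $\mca L$, such as the count of lines of $Y$ meeting a general line, but I expect the deformation argument to be the cleaner path.
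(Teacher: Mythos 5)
The paper gives no proof of this lemma --- it is quoted directly from Beauville--Donagi \cite{B-D} --- and your sketch is essentially a reconstruction of their argument: the correspondence $[\mca L]$ yields a morphism of Hodge structures of type $(-1,-1)$ because $p$ is a $\bb P^1$-bundle, the count $b_4(Y)=b_2(F(Y))=23$ with matching bigraded dimensions reduces bijectivity to injectivity, and a constant-rank argument for the morphism of local systems over the connected space of smooth cubics reduces everything to an explicit check at a Pfaffian cubic, where $F(Y)\cong\Hilb^2(S)$ for a degree-$14$ K3 surface $S$. The outline is correct and is the route taken in \cite{B-D}; the only step you leave schematic is the actual verification at the Pfaffian point, which you rightly flag as the substantive content.
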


Let $(z_0: \ldots : z_5) $ be the homogeneous coordinates of $\bb P^5$, 
and $f_Y(z_0, \ldots ,z_5)$ the defining polynomial of $Y$. 
Let $\mathit{Res}:H^5 (\bb P^5 -Y, \bb C )\to H^4(Y,\bb C) $ be the residue map. 
By Lemma \ref{Abel-Jacobi}, we have $ \alpha ( H^1 (Y , \Omega _{Y}^3)) = H^0(F(Y), \Omega_{F(Y)}^2)$.
Recall that	 $H^1 (Y , \Omega _Y^3)$ is generated by $Res \frac{\Omega }{f_Y^2}$ (\cite{Gri}), where 
$ \Omega= \sum _{i=0}^{5} (-1)^i z_i dz_0 \wedge   \cdots   \check{dz_i} \cdots \wedge dz_5 $. 
From these arguments, we get the following commutative diagram: 
\[
\xymatrix{
H ^4(Y, \bb C) \ar[r]_{\alpha}^{\cong } & H^2(F(Y),\bb C) \\
H^1 (Y , \Omega _{Y}^3) \ar@{^{(}->}[u] \ar[r]& H^0(F(Y), \Omega_{F(Y)}^2) \ar@{^{(}->}[u]\\
\bb C \langle \mathit{Res} \frac{\Omega}{f_Y^2} \rangle \ar@{=}[u] \ar[r] & \bb C \langle \sigma_{F(Y)} \rangle. \ar@{=}[u]
}
\]

\subsection{Group actions on $F(Y)$}

Let $\mathit{PGL}(5)$ be the automorphism group of $\bb P^5$, 
and $G$ a finite subgroup of $\mathit{PGL}(5)$. 
We assume that $G$ preserves $Y$. 
We need a criterion for the action preserving the symplectic form.

Let us consider the induced action $G {}^{\curvearrowright} F(Y)$. 
Since the Abel-Jacobi map $\alpha $ is $G$-equivariant, 
the induced action $G {}^{\curvearrowright} F(Y)$ preserves the symplectic form $\sigma _{F(Y)}$ if and only if 
$G$ preserves $\mathit{Res} \frac{\Omega }{f_Y^2}$.
So, we get the following lemma.

\begin{lem}\label{criterion}
Assume that $G$ preserves $Y$. Then $G$ preserves the symplectic form $\sigma _{F(Y)}$ on $F(Y)$ if and only if $G$ preserves 
$\mathit{Res} \frac{\Omega }{f_Y^2}$
\end{lem}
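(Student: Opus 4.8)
The plan is to reduce the statement to a comparison of two one-dimensional representations of $G$. Both $H^1(Y,\Omega^3_Y)$ and $H^0(F(Y),\Omega^2_{F(Y)})$ are one-dimensional lines on which $G$ acts by a character; since the Abel-Jacobi map identifies them $G$-equivariantly, these two characters must coincide, and the equivalence then follows by reading off when each character is trivial.

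First I would fix the relevant data. Because $F(Y)$ is an irreducible symplectic manifold, $H^0(F(Y),\Omega^2_{F(Y)})=\bb C\langle\sigma_{F(Y)}\rangle$ is one-dimensional; by the Griffiths presentation recalled above, $H^1(Y,\Omega^3_Y)=\bb C\langle\mathrm{Res}\frac{\Omega}{f_Y^2}\rangle$ is likewise one-dimensional. The commutative diagram preceding the lemma shows that $\alpha$ restricts to an isomorphism of these two lines, so $\alpha(\mathrm{Res}\frac{\Omega}{f_Y^2})=c\,\sigma_{F(Y)}$ for some $c\in\bb C^\times$. Writing the $G$-actions as characters, $g^*\,\mathrm{Res}\frac{\Omega}{f_Y^2}=\chi(g)\,\mathrm{Res}\frac{\Omega}{f_Y^2}$ and $g^*\sigma_{F(Y)}=\psi(g)\,\sigma_{F(Y)}$, I would apply the equivariant $\alpha$ to the first relation: $c\,\psi(g)\,\sigma_{F(Y)}=g^*\alpha(\mathrm{Res}\frac{\Omega}{f_Y^2})=\alpha(g^*\,\mathrm{Res}\frac{\Omega}{f_Y^2})=c\,\chi(g)\,\sigma_{F(Y)}$, whence $\chi=\psi$. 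Since $G$ preserves $\sigma_{F(Y)}$ exactly when $\psi\equiv 1$, and preserves $\mathrm{Res}\frac{\Omega}{f_Y^2}$ exactly when $\chi\equiv 1$, the two conditions are equivalent.

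The only step that genuinely requires argument rather than bookkeeping is the $G$-equivariance of $\alpha=p_*q^*$, which is the fact flagged just before the lemma. Here I would observe that any $g\in G\subset\mathit{PGL}(5)$ preserving $Y$ induces compatible automorphisms of $Y$, of $F(Y)$, and of the incidence variety $\mca L$, with respect to which both projections $p$ and $q$ are equivariant; functoriality of the pullback $q^*$ and of the proper pushforward $p_*$ under these automorphisms then gives $g^*\circ\alpha=\alpha\circ g^*$. I would also note, in order that the phrase ``$G$ preserves $\mathrm{Res}\frac{\Omega}{f_Y^2}$'' be well posed for $G\subset\mathit{PGL}(5)$, that $\Omega/f_Y^2$ is invariant under rescaling the homogeneous coordinates and hence descends to $\bb P^5\setminus Y$, so that its residue is a genuine class in $H^4(Y,\bb C)$ on which $G$ acts. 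If one wishes to make $\chi$ explicit, lifting $g$ to $\tilde g\in\mathrm{GL}(6)$ with $\tilde g^*f_Y=\lambda f_Y$ and using $\tilde g^*\Omega=\det(\tilde g)\,\Omega$ yields $\chi(g)=\det(\tilde g)/\lambda^2$, which is independent of the chosen lift; but this formula is not needed for the equivalence asserted in the lemma.
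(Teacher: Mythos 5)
Your proposal is correct and follows essentially the same route as the paper: the paper derives the lemma directly from the $G$-equivariance of the Abel--Jacobi map $\alpha$ together with the identification of the two one-dimensional lines $\bb C\langle \mathit{Res}\frac{\Omega}{f_Y^2}\rangle$ and $\bb C\langle\sigma_{F(Y)}\rangle$, which is exactly your character-comparison argument. Your additional remarks (equivariance of $p_*q^*$ via functoriality, well-definedness of $\Omega/f_Y^2$ for $G\subset\mathit{PGL}(5)$, and the explicit formula $\chi(g)=\det(\tilde g)/\lambda^2$) merely make explicit what the paper leaves implicit.
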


\section{First example}
We first recall results of \cite{Nam}. 
We define a smooth cubic 4-fold $Y$ by 
\[
Y:= \{ (z_0: \cdots :z_5) \in \bb P^5 |  f(z_0,z_1,z_2)+g(z_3,z_4,z_5)=0\} .
\] 
Define two elliptic curves by 
$$C:=\{(z_0: z_1:z_2) \in \bb P^2  | f(z_0,z_1,z_2)=0\}$$ and 
$$\ D:=\{ (z_3:z_4:z_5)\in \bb P^2 |   g(z_3,z_4,z_5)=0\}.$$
We put $$P_C=\{(z_0: \cdots :z_5) \in \bb P^5 | z_3=z_4=z_5=0  \},$$ 
and $$P_D=\{(z_0: \cdots :z_5) \in \bb P^5 | z_0=z_1=z_2=0  \}.$$ 
Notice that $C \subset P_C$ and $D \subset P_D$. 
Assume that $G \subset \mathit{PGL}(5)$ is the cyclic group $\bb Z_3$ of order $3$ generated by 
$$\tau = \begin{pmatrix}
1 &  &  &  &  &  \\
  & 1&  &  &  &  \\
  &  & 1&  &  &  \\
  &  &  &\zeta& & \\
  &  &  &   & \zeta& \\
  &  &  &   &  & \zeta 
\end{pmatrix}, $$ 
where $\zeta=\exp({2\pi \sqrt{-1}}/3)$. 

Let us consider the natural group action :
\[
\bb Z _3 {}^{\curvearrowright} \bb P ^5 \ni (z_0:z_1:z_2:z_3:z_4:z_5) \mapsto (z_0:z_1:z_2:\zeta z_3: \zeta z_4: \zeta z_5). 
\]
Since $\bb Z_3$ preserves $Y$, the action induces the $\bb Z_3$-action on $F(Y)$. 

\begin{lem}\label{crepant}
The induced action $\bb Z_3 {}^{\curvearrowright} F(Y)$ preserves the symplectic form $\sigma _{F(Y)}$, 
and the singular locus of $X_{\bb Z_3} := F(Y)/{\bb Z_3}$ is isomorphic to $C \times D$. 
The singularity is $A_2$. 
In particular $X_{\bb Z_3}$ has a crepant resolution $\nu : \tilde X_{\bb Z_3} \to X_{\bb Z_3}$. 
\end{lem}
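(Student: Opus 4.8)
The plan is to verify the three assertions in turn: the symplectic form is preserved, the singular locus is $C\times D$, and the singularities are of type $A_2$ (whence crepancy follows from the classification of surface Du Val singularities together with the $4$-fold structure).

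First I would check that $\bb Z_3$ preserves the symplectic form by applying Lemma \ref{criterion}. It suffices to compute the action of $\tau$ on $\mathit{Res}\frac{\Omega}{f_Y^2}$. The generator $\tau$ scales $z_3,z_4,z_5$ by $\zeta$ and fixes $z_0,z_1,z_2$, so $\Omega=\sum_{i=0}^5(-1)^i z_i\, dz_0\wedge\cdots\check{dz_i}\cdots\wedge dz_5$ picks up a factor $\zeta^3=1$ from the three scaled differentials together with the scaled coordinate in each monomial; likewise $f_Y=f(z_0,z_1,z_2)+g(z_3,z_4,z_5)$ is invariant because $g$ is homogeneous cubic in $z_3,z_4,z_5$ and $f$ involves only the fixed variables, so $f_Y\circ\tau=f_Y$. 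Hence $\tau^*\bigl(\frac{\Omega}{f_Y^2}\bigr)=\frac{\Omega}{f_Y^2}$ and by Lemma \ref{criterion} the induced action preserves $\sigma_{F(Y)}$.

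Next I would identify the fixed locus of $\tau$ acting on $F(Y)$, since the singular locus of the quotient $X_{\bb Z_3}$ is the image of $\mathrm{Fix}(\tau)$. A line $l\in F(Y)$ is fixed by $\tau$ precisely when $\tau$ maps $l$ to itself as a subvariety of $\bb P^5$. Because $\tau$ acts with two eigenvalue-subspaces $P_C=\{z_3=z_4=z_5=0\}$ and $P_D=\{z_0=z_1=z_2=0\}$ (eigenvalues $1$ and $\zeta$ on these two disjoint planes), a $\tau$-invariant line is either contained in one of the two eigenplanes or is the join of one fixed point in $P_C$ and one in $P_D$. I would argue that an invariant line meeting $Y$ appropriately and lying in $P_C$ forces the line to lie in $C$, similarly for $P_D$, and then show that the invariant lines producing genuine quotient singularities are exactly those joining a point of $C\subset P_C$ to a point of $D\subset P_D$. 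This yields a natural identification $\mathrm{Fix}(\tau)\cong C\times D$, and after passing to the quotient the singular locus of $X_{\bb Z_3}$ is isomorphic to $C\times D$, of dimension $2$.

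Finally I would determine the transverse singularity type. Near a fixed line $l$ the tangent space $T_l F(Y)$ carries a linear $\bb Z_3$-action preserving $\sigma_{F(Y)}$; splitting off the $2$-dimensional fixed directions tangent to $C\times D$ leaves a $2$-dimensional symplectic $\bb Z_3$-representation transverse to the fixed locus. I would compute the eigenvalues of $\tau$ on this transverse slice and check that they are $(\zeta,\zeta^{-1})$, so that the transverse quotient $\bb C^2/\bb Z_3$ with weights $(1,-1)$ is the $A_2$ Du Val singularity; the symplectic-preserving hypothesis is exactly what guarantees the product of eigenvalues is $1$, forcing the $SL_2$-type (hence Du Val, hence $A_2$) form. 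Once the transverse singularity is identified as $A_2$, crepancy of the minimal resolution follows, as surface Du Val singularities admit crepant (minimal) resolutions and this extends fiberwise over the smooth base $C\times D$ to give the crepant resolution $\nu:\tilde X_{\bb Z_3}\to X_{\bb Z_3}$. The main obstacle I expect is the eigenvalue computation on the transverse slice of $T_lF(Y)$: one must describe $T_lF(Y)\cong H^0(l,N_{l/Y})$ explicitly as a $\bb Z_3$-representation and isolate the non-fixed part, and correctly reading off the weights $(1,-1)$ (rather than, say, $(1,1)$, which would give a non-symplectic or non-Du-Val quotient) is where care is needed.
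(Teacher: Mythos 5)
Your proposal is correct and follows essentially the same route as the paper: Lemma \ref{criterion} together with the explicit invariance of $\Omega/f_Y^2$ for the symplectic form, identification of the fixed locus with lines joining a point of $C$ to a point of $D$, and the symplectic eigenvalue argument forcing transverse weights $(\zeta,\zeta^{-1})$ and hence $A_2$ singularities along $C\times D$. You in fact supply more detail than the paper does (e.g.\ the residue computation, and the case of $\tau$-invariant lines lying inside $P_C$ or $P_D$, which is vacuous since $Y\cap P_C=C$ is an elliptic curve and contains no lines); the paper's later Lemma \ref{mainlem} confirms your transverse weight computation in explicit local coordinates.
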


\begin{proof}
By Lemma \ref{criterion}, 
the induced action $\bb Z_3 {}^{\curvearrowright} F(Y)$ preserves the symplectic form $\sigma _{F(Y)}$.
The singular locus of $X_{\bb Z_3}$ is isomorphic to the fixed locus $\mathrm{Fix}_{\bb Z_3}(F(Y))$ of the $\bb Z_3$ action on $F(Y)$. 
We remark that $\mathrm{Fix}_{\bb Z_3}(Y)$ is the disjoint union $C \cup D$. 
If a line $l$ is in $\mathrm{Fix}_{\bb Z_3}(F(Y))$, then $\bb Z_3$ acts on the line $l$. 
$l$ has two fixed points by the $\bb Z_3$ action. 
So $l \in F(Y)$ is in $\mathrm{Fix}_{\bb Z_3}(F(Y))$ if and only if 
$l$ passes through a point $p \in C$ and $q \in D$. Hence $\mathrm{Fix}_{\bb Z_3}(F(Y))$ is isomorphic to $C \times D$. 
		\begin{center}
			\includegraphics[height=5cm,width=8cm,clip]{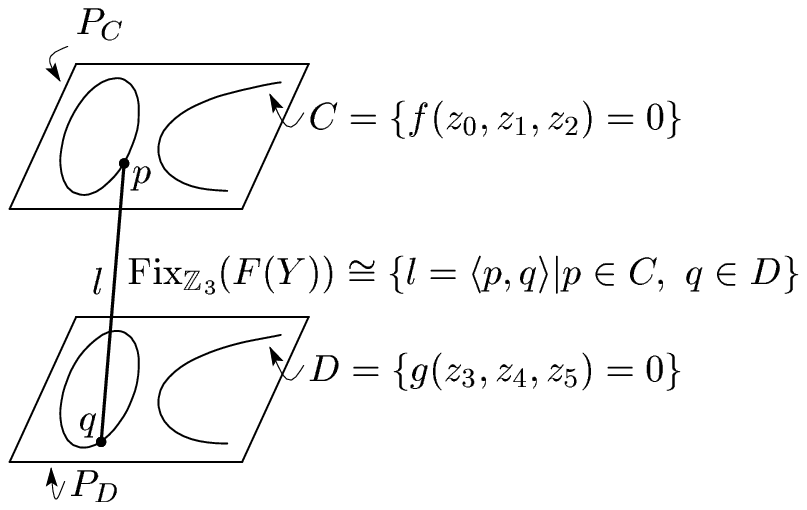}
		\end{center}

Since $\bb Z_3$ preserves $\sigma _{F(Y)}$, 
$X_{\bb Z_3}$ has $A_2$ singularities along $C \times D$. 
So, $X_{\bb Z_3}$ has a crepant resolution $\nu:\tilde X_{\bb Z_3} \to X_{\bb Z_3}$.
\end{proof}

\begin{prop}[\cite{Nam} 1.7.(iv)]\label{mainprop}
The notation being as above, 
$\tilde X_{\bb Z_3}$ is birational to $K^2(C \times D)$. 
\end{prop}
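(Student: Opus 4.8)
The plan is to exhibit both $X_{\bb Z_3}$ and $K^2(C\times D)$ as degree-$6$ rational covers of $(\bb P^2)^{*}\times(\bb P^2)^{*}$ and to identify the two covers by an explicit combinatorial matching of points, using the description $K^2(C)\cong(\bb P^2)^{*}$ (collinear triples on a plane cubic are exactly those summing to $0$) recalled in the Preliminaries.

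First I would set up the two coordinate projections from the complementary planes. Since $P_C$ and $P_D$ are disjoint $2$-planes in $\bb P^5$ fixed pointwise by $\tau$, define $\pi_D:\bb P^5\dashrightarrow P_C$, $(z_0:\cdots:z_5)\mapsto(z_0:z_1:z_2)$, and $\pi_C:\bb P^5\dashrightarrow P_D$, $(z_0:\cdots:z_5)\mapsto(z_3:z_4:z_5)$. For a general line $l\subset Y$ (meeting neither plane), $\ell_C:=\pi_D(l)\subset P_C$ and $\ell_D:=\pi_C(l)\subset P_D$ are lines, and $l$ lies in the $\bb P^3=\langle\ell_C,\ell_D\rangle$. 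On this $\bb P^3$ the cubic $Y$ cuts out a cubic surface $S_{\ell_C,\ell_D}=\{\lambda_1\lambda_2\lambda_3+\mu_1\mu_2\mu_3=0\}$, where $\lambda_1\lambda_2\lambda_3=f|_{\ell_C}$ and $\mu_1\mu_2\mu_3=g|_{\ell_D}$ are binary cubics whose zeros are $\ell_C\cap C=\{p_1,p_2,p_3\}$ and $\ell_D\cap D=\{q_1,q_2,q_3\}$. Because $\tau$ fixes $P_C$ and $P_D$ pointwise, the pair $(\ell_C,\ell_D)$ is $\bb Z_3$-invariant, so $l\mapsto(\ell_C,\ell_D)$ descends to a rational map $X_{\bb Z_3}\dashrightarrow(\bb P^2)^{*}\times(\bb P^2)^{*}=K^2(C)\times K^2(D)$.

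Next I would analyze the fibers through the lines on $S_{\ell_C,\ell_D}$. For generic $(\ell_C,\ell_D)$ the surface is smooth with $27$ lines. Nine of them are the connecting lines $\overline{p_iq_j}$ (one checks these lie on $S$ since $f,g$ vanish at $p_i,q_j$); these are the $\tau$-fixed lines that project to points, corresponding to the singular locus $C\times D$. The remaining $18$ lines each project isomorphically onto $\ell_C$ and $\ell_D$, inducing a linear isomorphism $\phi_l:\ell_C\to\ell_D$. Evaluating the equation of $S$ at $x=p_i$, where $\lambda_1\lambda_2\lambda_3(p_i)=0$, forces $\mu_1\mu_2\mu_3(\phi_l(p_i))=0$, i.e. $\phi_l(p_i)\in\{q_1,q_2,q_3\}$; hence $\phi_l$ restricts to a bijection $\sigma_l:\{p_i\}\to\{q_j\}$. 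Conversely a bijection $\sigma$ determines $\phi_l$ only up to the scaling $(z_0:z_1:z_3:z_4)\mapsto(z_0:z_1:cz_3:cz_4)$, and the requirement $l\subset S$ pins down $c^3$, producing exactly $3$ lines per matching — precisely one free $\bb Z_3$-orbit under $\tau$. Thus the $18$ transversal lines form $6$ free orbits matching the $3!=6$ bijections $\{p_i\}\to\{q_j\}$.

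Finally I would assemble the birational map: to the orbit $[l]$ assign the triple $\{(p_i,\sigma_l(p_i))\}$ on $C\times D$. Since $\sum p_i=0$ and $\{\sigma_l(p_i)\}=\{q_j\}$ with $\sum q_j=0$, this triple lies in $K^2(C\times D)$ over the point $(\ell_C,\ell_D)$, giving a rational map $\Psi:X_{\bb Z_3}\dashrightarrow K^2(C\times D)$ over $(\bb P^2)^{*}\times(\bb P^2)^{*}$ that is, on a generic fiber, the bijection $\{6\text{ orbits}\}\to\{6\text{ matchings}\}$. As a dominant, generically injective rational map between irreducible $4$-folds in characteristic $0$, $\Psi$ is birational, and composing with the crepant resolution $\nu:\tilde X_{\bb Z_3}\to X_{\bb Z_3}$ yields the asserted birational map $\tilde X_{\bb Z_3}\dashrightarrow K^2(C\times D)$. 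The main obstacle I anticipate is the genericity bookkeeping: verifying that $S_{\ell_C,\ell_D}$ is smooth for generic $(\ell_C,\ell_D)$, that the $27$ lines split cleanly as $9+18$ with the transversal ones projecting isomorphically, and that $\tau$ acts freely on them, so that the fiberwise combinatorial identification globalizes to a genuine dominant generically injective map. The conceptual heart — that transversal lines correspond to matchings of the two triples — is the clean part; the technical labor lies in controlling the special loci where this picture degenerates.
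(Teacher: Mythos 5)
Your proposal is correct, and it actually constructs the same map as the paper, but it establishes birationality by a genuinely different mechanism. First, the identification: your $\bb P^3=\langle \ell_C,\ell_D\rangle$ is exactly the paper's $W_l$ (the span of the orbit $\{l,\tau(l),\tau^2(l)\}$), and your matching $\sigma_l$ encodes the paper's distinguished lines, since the $\tau$-fixed lines of $S$ meeting a graph line $l$ are precisely the joins $\overline{p_i q_{\sigma_l(p_i)}}$; so both treatments send $[l]$ to $\{(p_i,\sigma_l(p_i))\}$. The paper proves birationality synthetically: using the blow-up model of the smooth cubic surface (exceptional curves $E_i$, conics $C_i$, lines $L_{ij}$) it shows there is a \emph{unique} triple of mutually disjoint lines $m_1,m_2,m_3$ meeting all of $l,\tau(l),\tau^2(l)$, that this triple is $\bb Z_3$-fixed, and it then builds the inverse $\varphi$ explicitly (from $\{(p_i,q_i)\}$, take the skew triple of lines in $\langle m_1,m_2,m_3\rangle\cap Y$ meeting every $m_i=\overline{p_iq_i}$), concluding $\varphi^{-1}=\psi$. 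You instead exhibit both $4$-folds as degree-$6$ covers of $K^2(C)\times K^2(D)\cong(\bb P^2)^{\vee}\times(\bb P^2)^{\vee}$ and check that $\Psi$ is a bijection on generic fibers, so birationality follows from generic injectivity plus dominance in characteristic $0$, with no need for uniqueness of the skew triple or an explicit inverse. Your route also buys a cleaner line classification: on the split surface $F(s)+G(t)=0$ you exhibit the $9$ joins $\overline{p_iq_j}$ and $6\times 3=18$ graph lines (per matching, the projective isomorphism $\phi$ is unique and its linear lift is fixed up to a scalar $c$ with $c^3$ pinned down, giving exactly one free $\tau$-orbit), and since a smooth cubic surface has exactly $27$ lines, this count already closes the main gap you flagged — every non-join line must be a graph; the remaining bookkeeping is equally routine ($S$ is smooth iff $F$ and $G$ are squarefree, i.e. $\ell_C,\ell_D$ are not tangent to $C,D$, which holds generically, and $\tau$ sends the graph of $\Phi$ to the graph of $\zeta\Phi$, hence acts freely on the $18$). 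What the paper's route buys is what it needs afterwards: the explicit inverse $\varphi$ and the identification of which lines and orbits play which role are reused in Lemma \ref{mainlem}, Lemma \ref{Q} and Theorem \ref{mainthm} to locate the indeterminacy loci $Q$ and $\bb P_{(\mathrm{I})}\cup\bb P_{(\mathrm{II})}$ and to exhibit $\psi$ as a Mukai flop on $18$ disjoint planes, whereas your fiberwise counting, while fully sufficient for Proposition \ref{mainprop} itself, would need to be supplemented by that finer analysis later.
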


\begin{proof}
We first define a rational map $\psi : \tilde X_{\bb Z_3} \dashrightarrow K ^2 (C \times D)$. 
Let $l$ be in $F(Y)$, and $[l]$ the orbit of $l$ by the $\bb Z_3$ action : $[l]=\{ l, \tau (l) , \tau ^2(l) \}$.
Let $W_l$ be the linear space spanned by $l, \tau (l)$ and $ \tau ^2(l)$. 
If we choose $l \in F(Y)$ in general, then $W_l$ is isomorphic to $\bb P ^3$ and both $P_C \cap W_l$ and $P_D \cap W_l$ are lines. 
Furthermore we may assume that the cubic surface $S := W_l \cap Y$ is smooth. 
There are 27 lines in $S$ \cite{Har}. 
Notice that three lines $l , \tau (l)$ and $ \tau ^2 (l) $ are contained in $S$. 

We will prove follwoings. 
\begin{enumerate}
\item There are three lines $m_1,m_2$ and $m_3$ such that $m_i$ does not meet $m_j$ ($i \neq j$), and each $m_i $ meets $l $ at one point (respectively $\tau(l) , \tau ^2 (l)$). See also Figure \ref{biratmaps}. 
\item The set of three lines is unique, and  
\item each $m_i$ ($i=1,2,3$) is fixed by the $\bb Z_3 $ action. 
\end{enumerate}
Indeed the cubic surface $S$ is the blow up of $\bb P^2$ along 6 points $a_1, \ldots , a_6$. 
Let $E_i$ be the exceptional $(-1)$-curve over $a_i$, 
$C_i$ the proper transform of the conic passing through 5 points except $a_i$, and $L_{ij}$ the proper transform of line passing through $a_i$ and $a_j$. 
We may assume that $E_1= l $, $E_2= \tau(l)$ and $E_3= \tau^2(l)$. 
By the configuration of lines, each $m_i$ is given by $m_1 = C_4, m_2= C_5$ and $m_3=C_6$. 
There are exactly six lines which does not intersect $\tau^k(l)$ ($k=0,1,2$). 
These six lines are given by $L_{45}, L_{46}, L_{56}, E_4, E_5$ and $E_6 $. 
Since $\bb Z_3$ acts on the set $\{ E_1,E_2, E_3 \}$, 
$\bb Z_3$ acts on the set $\{L_{45}, L_{46}, L_{56} , E_4, E_5, E_6 \}$. 
Since the order of $\bb Z_3$ is $3$, $\bb Z_3$ acts on the two set $\{L_{45}, L_{46}, L_{56} \}$ and $\{E_4, E_5, E_6 \}$. 
If $E_4$, $E_5$ and $E_6$ are not fixed by $\bb Z_3 $ then all lines in $S$ move by the $\bb Z_3$ action. 
Since there are 9 lines in $S$ fixed by the $\bb Z_3$ action, 
three lines $E_4, E_5$ and $E_6$ are fixed by the $\bb Z_3$ action. 
Hence three lines $m_1,m_2$ and $m_3$ are fixed by the $\bb Z_3$ action. 
In particular $m_i$ ($i=1,2,3$) meet $C$ (resp $D$) at one point. 

Put $p_i= m_i \cap C$ and $q_i= m_i \cap D$. 
\begin{figure}[htbp]
\begin{center}
\includegraphics[clip,height=40mm]{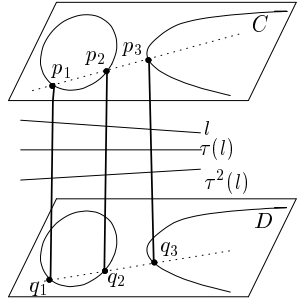}
\end{center}
\caption{Picture of $\psi$ and $\varphi$}
\label{biratmaps}
\end{figure}
Since three points $\{ p_1, p_2, p_3 \}$(resp. $\{ q_1, q_2, q_3 \}$) are in the line $P_C \cap W_l$, the sum $p_1 + p_2 + p_3$ is $0 \in C$ (resp. $q_1 + q_2 + q_3 = 0 \in D$). 
So we get an element $\{ (p_i, q_i) \}_{i=1}^{3}$ of $K^2(C\times D)$. 
We define $\psi $ as:
\[
\psi : \tilde X_{\bb Z_3} \dashrightarrow K ^2 (C \times D), \psi ([l]) =\{ (p_i, q_i) \}_{i=1}^{3}. 
\]  

We next define a rational map $\varphi : K^2(C \times D) \dashrightarrow \tilde X_{\bb Z_3}$. 
Let $\{ (p_i , q_i )  \}_{i=1}^3$ be in $K ^2(C \times D)$. 
We may assume that $p_i \neq p_j$ and $q_i \neq q_j$ ($i\neq j$). 
Let $m_i (i=1,2,3)$ be the line passing through $p_i$ and $q_i$. 
Let $M $ be the linear space spanned by $m_1, m_2$ and $ m_3$. 
Then $M$ is isomorphic to $\bb P ^3$. Let $S$ be the cubic surface $M \cap Y$. 
We remark that $S $ is smooth. 
By the configuration of 27 lines, 
there are exactly three lines $l_1,l_2$ and $l_3$ such that $l_i\cap l_j = \emptyset$ ($i\neq j$), and each $l_i $ meets $m_j $ ($i,j=1,2,3$) at exactly one point. 
We notice that $m_1,m_2$ and $m_3$ are fixed by the $\bb Z_3 $ action on $S$. 
By the uniqueness of the set of three lines, we have 
\[
\bb Z_3 {}^\curvearrowright \{ l_1, l_2, l_3 \} \mapsto \{ l_1, l_2, l_3\}.  
\]
Since each $l_i $ is not fixed by the $\bb Z_3$ action, we may assume that $\tau (l_1)=l_2$.   

By the uniqueness of the set of three lines, both $\varphi $ and $\psi$ are birational maps. 
In particular, ${\varphi}^{-1}=\psi$. 
\end{proof}

\subsection{On the indeterminacy of the birational maps}

In Theorem \ref{mainthm} we will prove that $\varphi $ and $\psi $ is given as the Mukai flop on 18 disjoint copies of $\bb P^2$. 
The key lemma of the proof is Lemma \ref{mainlem}. 
In the proof of Lemma \ref{mainlem}, 
we give the geometric description of the exceptional divisor of $\nu:\tilde X_{\bb Z_3} \to X_{\bb Z_3}$ 
by using $G$-Hilbert scheme. 
Let $X$ be a smooth projective variety over $\bb C$. We assume that a finite group $G$ acts on $X$. 
Then we define $G\mbox{-}\Hilb(X)$ as 
\[
G\mbox{-}\Hilb(X):= \{ Z \in \Hilb ^{\# G}(X) | \mca O_Z \cong \bb C[G]\ (\mathrm{as}\ G\mathrm{\ modules}) \}.
\]
and we call it \textit{$G$-Hilbert scheme of $X$}. 
Let $z$ be in $\tilde X_{\bb Z_3}$ 
and we assume that $z $ is not in the exceptional locus $\mathrm{Exc}(\nu)$ of $\nu$. 
Then $z$ is the orbit of $l \in F(Y)$ by the $\bb Z_3$ action. 
Hence $z$ is identified with $Z \in \bb Z_3 \mbox{-}\Hilb(F(Y))$. 
Assume that $z$ is in $\mathrm{Exc}(\nu)$. 
On a neighborhood of $\mathrm{Fix}(\bb Z_3)$, 
the $\bb Z_3$ action can be identified with
\[
\bb Z_3 \curvearrowright \mathrm{Spec}\bb C [x_1,x_2,x_3,x_4],\ (x_1,x_2,x_3,x_4) \mapsto (x_1,x_2,\zeta x_3, \zeta^2 x_4).
\] 
By using the $G$-graph, we see that $\bb Z_3\mbox{-}\Hilb(\mathrm{Spec}\bb C[x_1,x_2,x_3,x_4])$ is isomorphic to 
$\bb C^2\times \bb Z_3\mbox{-}\Hilb(\bb C^2)$ (See \cite{Nak}). 
So $\bb Z_3\mbox{-}\Hilb (\mathrm{Spec}\bb C[x_1,x_2,x_3,x_4]) $ is a crepant resolution of $\bb C^4/\bb Z_3$. 
Since this crepant resolution is unique, the element $z$ of $\tilde X_{\bb Z_3} $ can be identified with the element $Z$ of $\bb Z_3 \mbox{-}\Hilb (F(Y))$. 
By pulling back the universal family  $\mca L$ of $F(Y)$, 
we get the family of lines $\mca L_Z$ over $Z$. 
\[
\xymatrix{
\mca L_{Z}\ar[r] \ar[d] &\ar[d] \mca L \subset F(Y) \times \bb P^5  \ar[r] & \bb P^5 \\
Z      \ar[r]           & F(Y)
}
\]
The support of $\mca L_{Z}$ is $ l\cup \tau(l) \cup \tau^2 (l)  $, where $l \in  F(Y)$. 
Here we put $[l] := l\cup \tau(l) \cup \tau^2 (l)$. 
When $l$ is not in the fixed locus of the $\bb Z_3$ action, 
$\mca L_{Z}$ is uniquely determined by $[l]$. 
If $l$ is in the fixed locus of the $\bb Z_3$ action, then $Z$ is not determined uniquely by $[l]$. 

Let $p_2$ be the second projection 
$p_2:F(Y) \times \bb P^5 \to \bb P^5 $. 
For each $Z$, take the smallest linear subspace $L$ contained in $\bb P^5$ such that $\mca L_Z  \subset   p_2^{-1}(L) $.  
We call this $L$ the linear space spanned by $\mca L_Z$ (or simply  $Z$). 
In a generic case, $Z$ spans $\bb P^3$. In some special cases $Z$ spans $\bb P^2$.

\begin{lem}\label{mainlem}
Assume that the line $l \in F(Y)$ is in the fixed locus of the $\bb Z_3$ action, that is, $l$ passes through $p \in C$ and $q \in D$. 
We only consider $Z \in \tilde X_{\bb Z_3}$ such that the support of $\mca L_Z$ is $l$. 
Then $Z$ is parametrized by the tree $\mca I \cup \mca J$ of two projective lines $\mca I$ and $\mca J$. 

1. Assume that neither $p$ nor $q$ are 3-torsion points. 
Then $\mca L_{Z}$ spans $\bb P^3$, for each $Z$. 

2. Assume that both $p$ and $q$ are 3-torsion points. 
There exists exactly one point on each $\bb P^1$ such that 
$\mca L_Z$ spans $\bb P^2$. These two points are not in the intersection $\mca I \cap \mca J$. 
For other points, $\mca L_Z$ spans $\bb P^3$.

3. Assume that $p$ is a 3-torsion point and $q$ is not a 3-torsion point. 
There exists exactly one point on the tree such that $\mca L_Z$ spans $\bb P^2$. 
Moreover this point is not in the intersection $\mca I \cap \mca J$. 
Otherwise $\mca L_Z$ spans $\bb P^3$. 
\end{lem}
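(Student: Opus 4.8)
\emph{Proof (proposal).}
The plan is to work in the formal neighbourhood of a fixed line $l \in \mathrm{Fix}_{\bb Z_3}(F(Y))$ and to read the span of $\mca L_Z$ off the normal-bundle description of the deformations of $l$. First I would normalise coordinates so that $p = (1:0:0:0:0:0) \in C$, $q = (0:0:0:1:0:0) \in D$ and $l = \{z_1=z_2=z_4=z_5=0\}$; this is compatible with $\tau$, which acts by a scalar on each block $(z_0,z_1,z_2)$ and $(z_3,z_4,z_5)$. A first-order deformation of $l$ is a section of $N_{l/\bb P^5}=\mca O_l(1)^{\oplus 4}$, written $\eta=\sum_{i\in\{1,2,4,5\}}(\alpha_i s+\beta_i t)e_i$ in coordinates $[s:t]$ on $l$ ($p=[1:0]$, $q=[0:1]$), where $e_i$ is the normal frame attached to $z_i$. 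Computing $\tau_\ast$ gives the weights of the eight coordinates, and restricting to $H^0(N_{l/Y})=\Ker\bigl(H^0(N_{l/\bb P^5})\to H^0(\mca O_l(3))\bigr)$, i.e. imposing $df_Y(\eta)=0$, cuts out $T_lF(Y)$ with weights $(0,0,1,2)$. I would identify the weight-$1$ generator $\eta_1=s(g_5e_4-g_4e_5)$ and the weight-$2$ generator $\eta_2=t(f_2e_1-f_1e_2)$, where $f_i=(\partial f/\partial z_i)(p)$, $g_i=(\partial g/\partial z_i)(q)$ (note $f_0(p)=g_3(q)=0$ by Euler). These span the transverse slice $\bb C^2$ on which, by the product structure already recalled, the fibre is the $\bb Z_3$-Hilbert scheme of $\bb C^2_{(\eta_1,\eta_2)}$, i.e. the $A_2$-tree $\mca I\cup\mca J$, with its three monomial clusters: the $\mathfrak m^2$-cluster (the node $\mca I\cap\mca J$), the one curvilinear along $\eta_1$, and the one curvilinear along $\eta_2$.

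Next I would translate ``$\mca L_Z$ spans $\bb P^2$'' into a condition on the normal directions the jet $Z$ detects. Since $\mca L_Z$ is supported on $l$, it spans $\bb P^2$ precisely when every normal vector it produces lies on a single line of $\bb P(N_{l/\bb P^5})$. The first-order deformation along $\eta_1$ contributes $g_5e_4-g_4e_5$ in the $D$-plane $\langle e_4,e_5\rangle$ (at the $p$-end), and along $\eta_2$ it contributes $f_2e_1-f_1e_2$ in the $C$-plane $\langle e_1,e_2\rangle$ (at the $q$-end). As these lie in complementary coordinate planes, any cluster detecting both to first order — in particular the $\mathfrak m^2$-cluster at the node — already spans $\bb P^3$. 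Hence a drop to $\bb P^2$ can occur only for clusters curvilinear along a single axis, i.e. on $\mca I\cup\mca J$ away from the node.

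The heart of the proof is the second-order computation on, say, $\mca I$, parametrised by clusters with $\eta_2\equiv\beta\,\eta_1^2$ ($\beta\in\bb P^1$; $\beta=0$ is the $\eta_1$-curvilinear endpoint, $\beta=\infty$ is the node). Expanding $f_Y=0$ along the corresponding $2$-jet of lines, the coefficient of $s^2t$ determines the $C$-plane vector $\beta_1^{(2)}e_1+\beta_2^{(2)}e_2$ at the $q$-end through $f_1\beta_1^{(2)}+f_2\beta_2^{(2)}=-\tfrac12 H_D$, while its $\eta_2$-component is exactly the cluster parameter $\beta$; here $H_D$ is the Hessian of $g$ along the tangent of $D$ at $q$ (a nonzero multiple of the Hesse invariant). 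Thus this vector vanishes iff $\beta=0$ and $H_D=0$; otherwise it is a nonzero $C$-plane direction, not parallel to the $D$-plane vector $g_5e_4-g_4e_5$, and $\mca L_Z$ spans $\bb P^3$. Since $H_D=0$ exactly when $q$ is a flex of $D$ and the group law on $D$ has a flex as origin, the span drops to $\bb P^2$ on $\mca I$ only at the curvilinear endpoint and only when $q\in D[3]$. The mirror computation on $\mca J$ gives the same statement with $C$, $p$, $H_C$.

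Assembling these, the node always spans $\bb P^3$, the unique candidate on $\mca I$ is its $\eta_1$-curvilinear endpoint (realised iff $q$ is $3$-torsion), and the unique candidate on $\mca J$ is its $\eta_2$-curvilinear endpoint (realised iff $p$ is $3$-torsion); in particular a $\bb P^2$-point is never the node $\mca I\cap\mca J$. This yields all three cases at once: if neither point is $3$-torsion every $\mca L_Z$ spans $\bb P^3$; if both are, there is exactly one $\bb P^2$-point on each of $\mca I$ and $\mca J$, neither at the node; and if only $p$ is $3$-torsion there is exactly one $\bb P^2$-point on the whole tree, lying on $\mca J$. I expect the main obstacle to be the second-order jet computation of the third paragraph: one must carry the expansion of $f_Y=0$ to order $\epsilon^2$, check that reparametrisation only alters the second-order term by a multiple of $\eta_1$ (so the $C$-plane vector $(\beta_1^{(2)},\beta_2^{(2)})$ is intrinsic), isolate the genuinely new normal direction, and correctly identify the resulting obstruction with the flex (equivalently $3$-torsion) invariant of $D$ (resp. $C$).
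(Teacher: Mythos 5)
Your proposal is correct and follows essentially the same route as the paper: local coordinates on $G(2,6)$ near $l$, the $\bb Z_3$-weight decomposition $(0,0,1,2)$ of $T_lF(Y)$, the identification of the fibre over $[l]$ with the $A_2$-tree of $\bb Z_3$-clusters $\mca I\cup\mca J$, and a second-order computation of the linear span of $\mca L_Z$ on each branch. The only difference is organizational: you run a single intrinsic computation and name the obstruction to the span dropping to $\bb P^2$ as the Hessian of the cubic along the tangent line (the flex, i.e.\ $3$-torsion, condition), whereas the paper repeats the explicit coordinate computation for three separate normalizations of $p$ and $q$; your asserted second-order relation is consistent with the paper's explicit ideals $I_{(a:b)}$ and $J_{(c:d)}$ (e.g.\ the forced terms $\beta u_3-v_1^2$ and $\alpha v_0+u_4^2$ in the first case), so the plan would go through once that jet computation is written out.
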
 

When $q$ is a 3-torsion point and $p$ is not a 3-torsion point, we have a similar situation to \textit{3}. 
\begin{proof}
Let $Z \in \tilde X_{\bb Z_3}$ be a closed subscheme of length $3$ such that the support of $\mca L_Z$ is the line passing through $p \in C$ and $q \in D$. 
Let $(x:y:z:u:v:w)$ be the homogeneous coordinates of $\bb P^5$, 
consider the cubic $$ Y:=\{ (x: \cdots :w) \in \bb P^5 |   f(x,y,z) + g(u,v,w)=0  \}.$$ 
Let $C$ and $D$ be elliptic curves defined by $$C:=  \{ (x:y:z) \in \bb P^2 | f(x,y,z)=0 \},\  D:= \{ (u:v:w) \in \bb P^2 | g(u,v,w)=0 \}.$$ 
We put $$P_C := \{ (x: \cdots :w ) \in \bb P^5 |  u=v=w=0  \},$$ and 
$$P_D := \{ (x: \cdots :w) \in \bb P^5 | x=y=z=0  \}. $$

1. 
By changing coordinates, we may assume that 
$$p=(0:0:1), \ f(x,y,z)= x(x-z)(x- \alpha z) -y^2z ,\ \alpha \in \bb C- \{0,1 \} $$ 
$$q=(0:0:1),\ g(u,v,w)= u(u-w)(u-\beta w) -v^2w,\ \ \beta \in \bb C- \{0,1 \} . $$ 
So the line $l$ is given by:
\[
l= (0:0: \lambda : 0: 0: \mu ) \ \mathrm{where\ } (\lambda :\mu ) \in \bb P^1.
\]
Let $G(2,6)$ be the Grassmann manifold parameterizing $2$-dimensional subspace of $\bb C^6$. 
We will give coordinates system around $l$ by using the local coordinates of  $G(2,6)$. 
We define two hyperplanes 
\begin{eqnarray*}
\{ z=0 \}:= \{ (x: y:\cdots : w) \in \bb P^5| z=0 \}, \\
  \{ w=0 \}:= \{ (x: y:\cdots : w) \in \bb P^5| w=0 \}. 
\end{eqnarray*}
Then the intersection $\{ w=0 \} \cap l$ is $p=(0:0:1:0:0:0)$ and the intersection 
$\{z=0 \} \cap l$ is $q=(0:0:0:0:0:1)$. 
Let $(u_0,u_1,u_3,u_4)$ be affine coordinates of the hyperplane $\{ w =0\}$ around $p$ :
\[
 (u_0:u_1:1:u_3:u_4:0) \in \bb P^4  . 
\]
Let $(v_0,v_1,v_3,v_4)$ be affine coordinates of the hyperplane $\{ z =0\}$ around $q$ :
\[
 (v_0:v_1:0:v_3:v_4:1) \in \bb P^4   . 
\]
Let $U_l$ be an open neighborhood of $l$ in $G(2,6)$. The local coordinates of $U_l$ is given by $(u_0,u_1,u_3,u_4,v_0,v_1,v_3,v_4)$. 
Indeed, for each $(u_0,\cdots , v_4) \in U_l$, the corresponding line $\tilde l$ is given by 
\[
(u_0,\cdots , v_4) \leftrightarrow \tilde l:= \lambda (u_0:u_1:1:u_3:u_4:0) + \mu (v_0:v_1:0:v_3:v_4:1)
\]

The line $\tilde l \in G(2,6)$ is in $ F(Y)$ if and only if 
$\tilde l$ is contained in $ Y$. 
Therefore we have
\[
(f+g)(\tilde l) = \lambda ^3 F_1 + \lambda ^2 \mu F_2 + \lambda \mu ^2 F_3 + \mu ^3 F_4 =0
,\ \mathrm{for\ }\forall (\lambda :\mu ) \in \bb P^1 . 
\]
So we get the following equations
\begin{eqnarray*}
F_1 & := & u_0^3 -(\alpha +1) u_0^2 + \alpha u_0 - u_1 ^2 + u_3 ^3 =0 \\
F_2 & := & 3  u_0^2 v_0 -(\alpha +1 ) 2 u_0 v_0 + \alpha v_0 - 2 u_1 v_1 + 3 u_3 ^2 v_3 -(\beta +1 ) u_3 ^2 -u_4^2 =0\\ 
F_3 & := & 3  u_0 v_0^2  -(\alpha +1 )  v_0^2  - v_1^2 + 3 u_3 v_3^2 -(\beta +1 ) 2 u_3 v_3 + \beta u_3 -2 u_4 v_4 =0 \\
F_4 & := & v_0^3 +v_3^3 -(\beta +1) v_3^2 + \beta v_3  -v_4 ^2 =0 
\end{eqnarray*}
By the implicit function theorem, we can choose the affine coordinates of $F(Y)$ as $(u_1,u_4,v_1,v_4)$.

Let us consider the exceptional divisor of $\nu : \tilde X_{\bb Z_3} \to X_{\bb Z_3}$. 
Locally, the $\bb Z_3 $ action on $F(Y)$ can be written as 
\[
\bb Z_3 {}^\curvearrowright (u_1,u_4,v_1,v_4) \mapsto (u_1,\zeta u_4, \zeta ^2 v_1,v_4). 
\]
The exceptional divisor of $\bb Z_3 \mbox{-}\Hilb (\mathrm{Spec} \bb C[u_1,u_4,v_1,v_4]/\bb Z_3) $
 is given by $E_1+ E_2$, where 
\[ 
E_1 = \{ (u_1-\lambda, v_4-\mu, u_4^2, u_4v_1, v_1^3, au_4 + bv_1^2) | (\lambda,\mu) \in \bb C^2,\  (a:b) \in \bb P ^1 \} ,
\] 
\[
E_2 = \{ (u_1-\lambda, v_4-\mu, u_4^2, u_4^3, u_4 v_1, v_1^2, cu_4^2 + dv_1) | (\lambda,\mu) \in \bb C^2,\  (c:d) \in \bb P^1\}  .
\]
Here we identify the sub-schemes with their defining ideals. 
Since the line $l$ corresponds to the origin, 
when $Z\in E_1$, the ideal sheaf of $Z$ in $G(2,6)$ is 
\[
 I_{(a:b)} = (F_1, F_2, F_3, F_4 , u_1,v_4,u_4^2 ,u_4v_1,v_1^3, au_4 + bv_1^2) ,\  (a:b) \in \bb P^1.
\]
When $Z\in E_2$, the ideal sheaf of $Z$ in $G(2,6)$ is 
\[
 J_{(c:d)} = (F_1, F_2, F_3 , F_4 ,u_1 , v_4, u_4^3 , u_4v_1, v_1^3, cu_4^2 + dv_1) ,\  (c:d) \in \bb P^1 .
\]
By the calculation of generators of ideals, we have 
\[
 I_{(a:b)} = (u_0, v_3 ,v_0, \beta u_3-v_1^2, u_1, v_4, u_3^2, u_4v_1, v_1^3, au_4+bv_1^2),\  (a:b) \in \bb P^1 
\]
\[
 J_{(c:d)} = (u_0, u_3, v_3, \alpha v_0+u_4^2, u_1, v_4, u_4^3, u_4v_1, v_1^2, cu_4^2+d v_1),\  (c:d) \in \bb P^1 
\]
In particular, $Z$ is parametrized by $\mca I \cup \mca J $ where $\mca I= \{ I_{(a:b)} | (a:b) \in \bb P^1 \}$
 and $\mca J = \{  J_{(c:d)} | (c:d) \in \bb P^1   \}$. 
Assume that $Z$ is defined by $I_{(a:b)}$. 
The family of line $\mca L_{Z}$ over $Z$ is given by
\[
\mca L_{Z}  =  \{ (0: \mu v_1 : \lambda : \lambda u_3: \lambda u_4 : \mu ) 
| u_3^2=v_1^3=\beta u_3 -v_1^2=au_4+bv_1^2=u_4v_1=0 \} . 
\]
Since $\beta u_3- v_1^2 = a u_4 + bv_1^2=0$, we get an equation $a u_4 + b\beta u_3=0$. 
So, the linear space $L_{(a:b)}$ spanned by $\mca L_Z$ is  
\[
L_{(a:b)} = \{ (x:y:z:u:v:w) \in \bb P^5 | x= av + b \beta u =0 \}. 
\]
We remark that $L_{(a:b)}$ is spanned by the tangent line $\{ x=0 \} \subset P_C$ of $C$ at $p$ and the line $\{av + b \beta u =0  \} \subset P_D$ passing through $q \in D$. 

Assume that $Z$ is defined by $J_{(c:d)}$. 
In the same way, the linear space $M _{(c:d)}$ spanned by $\mca L_Z$ is  
\[
M_{(c:d)} = \{ (x:y:z:u:v:w) \in \bb P^5 | u= d y - c \alpha x  =0 \}. 
\]
$M_{(c:d)}$ is spanned by the line $\{ dy-c \alpha x =0 \} \subset P_C$ passing through $p \in C$ and the tangent line $\{ u=0 \} \subset P_D$ of $D$ at $q$. 
So we can draw the following picture. 
\begin{center}
\includegraphics[clip,width=12cm,height=8cm]{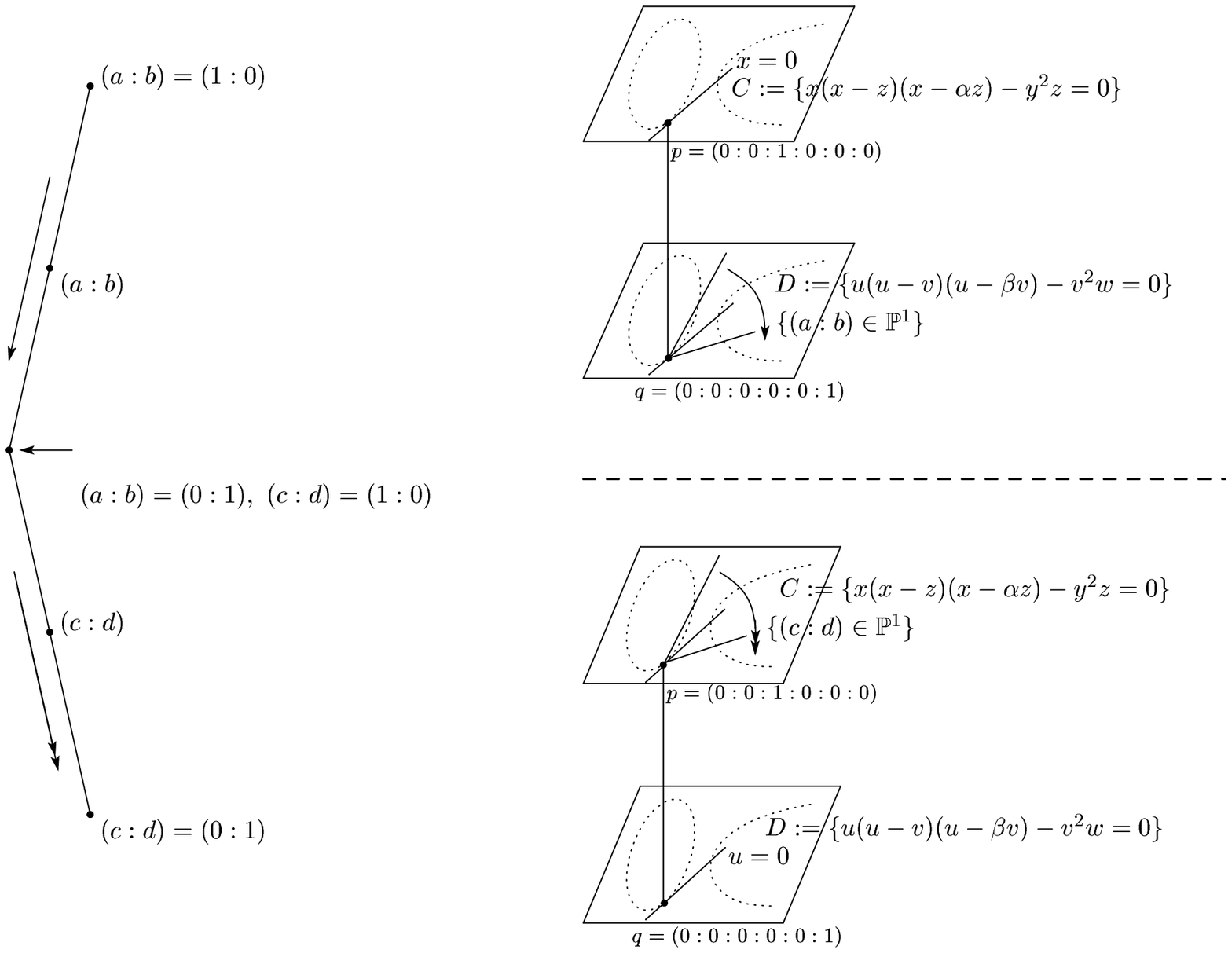}
\end{center}

2. By changing coordinates, we may assume that 
$$p=(0:1:0),\ f(x,y,z)= x(x-z)(x- \alpha z) -y^2z ,\ \alpha \in \bb C-\{0,1\} $$ and  
$$q=(0:1:0),\  g(u,v,w)= u(u-w)(u-\beta w) -v^2w ,\ \beta \in \bb C-\{0,1\} .$$ 

Let $U_l$ be a neighborhood of $l$ in $G(2,6)$. 
Local coordinates of $U_l $ are given by 
$(u_0,u_2,u_3,u_5,v_0,v_2,v_3,v_5) $. 
The corresponding line $\tilde l$ to $(u_0, \cdots ,v_5)$ is given by 
\[
(u_0, \cdots ,v_5) \leftrightarrow  \tilde l := \lambda (u_0:1:u_2:u_3:0:u_5) + 
\mu (v_0:0:v_2:v_3:1:v_5)
\]
Since $(f+g)(\tilde  l)$ should be zero for each $(\lambda :\mu ) \in \bb P^1$, we have 
\[
(f+g)(\tilde l)=\lambda ^3 F_1 + \lambda ^2\mu F_2 + \lambda \mu ^2 F_3 + \mu ^3 F_4 =0.   
\]
Let $d F_i$ be the exterior derivative of $F_i$ ($i=1, \cdots ,4$) at the origin. 
Then we have 
\[
\begin{pmatrix} dF_1 & dF_2 & dF_3 & dF_4  \end{pmatrix}
=
\begin{pmatrix} -du_2 & -dv_2 & -du_5 & -dv_5  \end{pmatrix}. 
\]
%
The local coordinates around $l \in F(Y)$ is given by $(u_0 , u_3, v_0, v_3)$. 
The $\bb Z_3$ action on $F(Y)$ is locally given by 
\[
\bb Z_3 {}^{\curvearrowright} F(Y) , (u_0,u_3,v_0,v_3 ) \mapsto (u_0,\zeta u_3, \zeta ^2 v_0,v_3) .
\]
The exceptional divisor of $\bb Z_3 \mbox{-}\Hilb(\mathrm{Spec}\bb C[u_0,u_3,v_0,v_3 ])$ is given by $E_1 + E_2$ where 
\[
E_1 = \{ ( u_0-\lambda ,v_3-\mu , u_3^2 , u_3v_0, v_0^3, a u_3 + bv_0^2) | (\lambda ,\mu)\in \bb C^2 ,(a:b) \in \bb P ^1 \}, 
\]
\[ 
E_2 = \{  (u_0-\lambda ,v_3-\mu ,u_3^3 , u_3 v_0, v_0^2, c u_3^2 + d v_0 ) |(\lambda,\mu) \in \bb C^2,  (c:d) \in \bb P^1 \}. 
\]
When $Z \in E_1$, the ideal sheaf of $Z$ in $G(2,6)$ is 
\[
 I_{(a:b)} = (u_2,u_5,  v_2 ,v_5, u_0, v_3 , u_3^2 , u_3v_0, v_0^3, au_3+bv_0^2) ,\  (a:b) \in \bb P^1 .
\]
When $Z \in E_2$, the ideal sheaf of $Z$ in $G(2,6)$ is 
\[
 J_{(c:d)} = (u_2, u_5, v_2, v_5, u_0, v_3, u_3^3, u_3v_0 , v_0^2 ,  cu_3^2+d v_0) ,\  (c:d) \in \bb P^1 . 
\]
In particular, $Z$ is parametrized by $\mca I \cup \mca J $ where $\mca I= \{ I_{(a:b)} | (a:b) \in \bb P^1 \}$
 and $\mca J = \{  J_{(c:d)} | (c:d) \in \bb P^1   \}$. 
Assume that $Z$ is defined by $I_{(a:b)}$. 
When $(a:b)\neq (1:0)$, $\mca L_{ Z}$ is 
\[
\mca L_{Z} =  \{ (\mu v_0: \lambda : 0 : \lambda u_3: \mu  : 0 ) |  u_3^2=u_3v_0=v_0^3=au_3+bv_0^2=0 \} . 
\]
The linear space $L_{(a:b)}$ spanned by  $\mca L_Z$ is 
\[
L_{(a:b)} = \{ (x:y:z:u:v:w) \in \bb P^5 | z= w =0 \}. 
\]
We remark that $L_{(a:b)}$ is spanned by the tangent line $\{z = 0 \} \subset P_C$ of $C$ at $p$ and the tangent line $\{  w=0\} \subset P_D$ of $D$ at $q$. 
When $(a,b )=(1,0)$, $\mca L_{Z}$ is
\[
\mca L_{Z} =  \{ (\mu v_0: \lambda : 0 : 0: \mu  : 0 ) |  v_0^3 =0 \}. 
\]
The linear space $L_{(1:0)}$ spanned by $Z$ is 
\[
L_{(1:0)} = \{ z=u=w=0 \} .
\]
The linear space $L_{(1:0)}$ is spanned by the tangent line $\{ z=0 \} \subset P_C$ of $C$ at $p$ and the point $q \in D$. 

Assume that $Z$ is defined by $J_{(c:d)}$. 
In the same way, when $(c:d)\neq(0:1)$, 
the linear space $M _{(c:d)} $ spanned by $Z$ is  
\[
M_{(c:d)} = \{ (x:y:z:u:v:w) \in \bb P^5 | z= w  =0 \}. 
\]
This linear space is spanned by the tangent line $\{ z=0 \} \subset P_C$ of $C $ at $p$ and the tangent line $\{w=0  \} \subset P_D$ of $D$ at $q$. 
When $(c:d )=(0:1)$, $\mca L_Z$ is 
\[
\mca L _Z = \{ (0: \lambda : 0 : \lambda u_3: \mu  : 0 ) |  u_3^3=0 \}. 
\]
So $M_{(0:1)}$ is $$\{z=w=x=0\}  .$$ 
The linear space $M_{(0:1)}$ is spanned by the point $p \in C$ and the tangent line $\{ w=0 \} \subset P_D$ of $D$ at $q$. 
\vspace{5pt}

3. 
By changing coordinates, we assume that 
$$p=(0:1:0),\ f(x,y,z)= x(x-z)(x- \alpha z) -y^2z,\ \alpha \in \bb C-\{0,1\}$$ and  
$$q=(0:0:1),\ g(u,v,w)= u(u-w)(u-\beta w) -v^2w,\ \beta \in \bb C-\{0,1\} .$$ 

Let $U_l$ be a neighborhood of $l$ in $G(2,6)$. 
Local coordinates of $U_l $ are given by
$ (u_0,u_2,u_3,u_4,v_0,v_2,v_3,v_4)$. 
For $(u_0, \cdots , v_4) \in U_l$, the corresponding line $\tilde l$ is given by 
\[
(u_0, \cdots , v_4) \leftrightarrow  \tilde l := \lambda (u_0:1:u_2:u_3:u_4:0) + 
\mu (v_0:0:v_2:v_3:v_4:1)
\]
When $\tilde l \in F(Y)$, we have $ (f+g)(\tilde l) = \lambda ^3F_1+ \lambda ^2 \mu F_2 + \lambda \mu ^2 F_3 + \mu ^3 F_4 =0$, for each $(\lambda , \mu) \in \bb P^1$. 
Let $dF_i$ be the exterior derivative of $F_i$ ($i=1, \cdots ,4$) at the origin. 
Then we have 
\[
\begin{pmatrix} dF_1 & dF_2 & dF_3 & dF_4  \end{pmatrix}
=
\begin{pmatrix} -du_2 & -dv_2 & \beta du_3 & \beta dv_3  \end{pmatrix}
\]
%
So the local coordinates of $F(Y)$ around $l$ are given by $(u_0 , u_4, v_0, v_4)$. 
The $\bb Z_3$ action is given by:
\[
\bb Z_3 {}^{\curvearrowright} F(Y) , (u_0,u_4,v_0,v_4 ) \mapsto (u_0,\zeta u_4, \zeta ^2 v_0,v_4) .
\]
The exceptional divisor of $\bb Z_3 \mbox{-}\Hilb (\mathrm{Spec}\bb C[u_0,u_4,v_0,v_4]) $ is given by $E_1+E_2$ where 
\[
E_1 = \{ ( u_0-\lambda ,v_4-\mu ,  u_4^2 , u_4v_0, v_0^3, a u_4 + bv_0^2) | (\lambda ,\mu) \in \bb C^2, (a:b) \in \bb P ^1 \} 
\]
\[ 
E_2 = \{  (u_0-\lambda ,v_4-\mu ,u_4^3 , u_4 v_0, v_0^2 , c u_4^2 + d v_0 ) | (\lambda ,\mu) \in \bb C^2, (c:d) \in \bb P^1\}.
\]
If $Z\in E_1$ then $Z$ is defined by 
\[
 I_{(a:b)} = (u_2,u_3,  v_2 ,v_3, u_0, v_4 , u_4^2 , u_4v_0, v_0^3, au_4+bv_0^2) ,\  (a:b) \in \bb P^1 
\]
If $Z\in E_2$ then the ideal sheaf of $Z$ is  
\[
 J_{(c:d)} = (u_2, u_3, v_3, u_4^2+v_2, u_0, v_4, u_4^3, u_4v_0 , v_0^2 ,  cu_4^2+d v_0) ,\  (c:d) \in \bb P^1.  
\]
In particular, $Z$ is parametrized by $\mca I \cup \mca J $ where $\mca I= \{ I_{(a:b)} | (a:b) \in \bb P^1 \}$
 and $\mca J = \{  J_{(c:d)} | (c:d) \in \bb P^1   \}$. 
Assume that $Z$ is defined by $I_{(a:b)}$. 
When $(a:b)\neq(1:0)$, $\mca L_{Z}$ is 
\[
\mca L_{Z} =\{ (\mu v_0: \lambda : 0 : 0 : \lambda u_4  : \mu ) |  u_4^2=u_4v_0=v_0^3=au_4+bv_0^2=0 \} . 
\]
The linear space $L_{(a:b)}$ spanned by $\mca L_Z$ is 
\[
L_{(a:b)} = \{ z=u=0 \}. 
\]
This linear space is spanned by the tangent line $\{z=0\} \subset P_C$ of $C$ at $p$ and  the tangent line $\{ w=0 \} \subset P_D$ of $D$ at $q$. 
When $(a:b)=(1:0)$, $\mca L_{Z}$ is 
\[
\mca L_{Z} = (\mu v_0: \lambda : 0 : 0 : 0  : \mu ) |  v_0^3=0  \} . 
\]
Then the linear space $L_{(1:0)}$ spanned by $\mca L_Z$ is 
\[
L_{(1:0)} = \{ z=u=v=0 \} .
\] 
This linear space is spanned by the tangent line $\{ z=0\} \subset P_C $ of $C$ at $p$ and the point $q \in D$. 

Assume that $Z$ is defined by $J_{(c:d)}$. 
For each $(c:d) \in \bb P^1$, $\mca L_{Z}$ is 
\[
\mca L_{Z} = (\mu v_0: \lambda : \mu v_2 : 0 : \lambda u_4  : \mu ) | u_4^2+v_2 = u_4^3 = u_4v_0 = v_0^2 = cu_4^2+dv_0=0 \} . 
\]
Similarly, for each $(c:d) \in \bb P^1$, the linear space $M _{(c:d)}$ spanned by $\mca L_Z$ is 
\[
M_{(c:d)} = \{ (x:y:z:u:v:w) \in \bb P^5 | u= cx -dz  =0 \}. 
\]
The linear space is spanned by the line $\{ cx-dz=0 \} \subset P_C$ passing through the point $p$ and 
the tangent line $\{ u=0  \} \subset P_D$ of $D$ at $q$. 
\end{proof}

To describe the indeterminacy of $\psi$ explicitly, 
we define $Q_{(\mathrm{I})}$ and $Q_{(\mathrm{II})}$ as:
\[
Q_{(\mathrm{I})} := \{ Z \in \tilde X_{\bb Z_3} | \mca L_Z\mathrm{\ spans\ }\bb P^2\mathrm{\ and\ } \mca L_Z \ni p \mathrm{\ where\ }p \in C\ \mathrm{and}\ 3p=0   \},
\]
and 
\[
Q_{(\mathrm{II})} := \{ Z \in \tilde X_{\bb Z_3} | \mca L _Z\mathrm{\ spans\ }\bb P^2\mathrm{\ and\ } \mca L_Z\ni q \mathrm{\ where\ }q \in D\ \mathrm{and}\ 3q=0   \}. 
\]
Let $ Q :=Q_{(\mathrm{I})}  \cup Q_{(\mathrm{II})} $. 
We shall prove that the indeterminacy of $\psi $ is $Q$ in Theorem \ref{mainthm}. 
\begin{lem}\label{Q}
If $\mca L_Z$ spans $\bb P^2$, then $\mca L_Z$ passes through a 3-torsion point of $C$ or $D$. 
Furthermore $Q_{(\mathrm{I})} $ is isomorphic to $\{ p\in C| 3p=0 \} \times P_D$, and 
$Q_{(\mathrm{II})} $ is isomorphic to $\{ q \in D| 3q=0 \} \times P_C$. 
In particular, $Q_{(\mathrm{I})} $ and $Q_{(\mathrm{II})}$ are respectively isomorphic to 9 disjoint copies of $\bb P^2$. 
\end{lem}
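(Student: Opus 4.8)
The plan is to exploit that the linear span $L:=\langle\mca L_Z\rangle\subset\bb P^5$ is always $\bb Z_3$-invariant: $\tau$ either cyclically permutes the three lines $l,\tau(l),\tau^2(l)$ making up the support of $\mca L_Z$, or fixes the support line. Writing $\bb C^6=V_0\oplus V_1$ for the eigenspace decomposition of $\tau$, where $V_0$ (weight $0$) cuts out $P_C$ and $V_1$ (weight $\zeta$) cuts out $P_D$, any invariant plane $V$ satisfies $V=(V\cap V_0)\oplus(V\cap V_1)$; hence $L$ is $P_C$, $P_D$, a \emph{type-I} plane $\langle\ell,r\rangle$ with $\ell\subset P_C$ a line and $r\in P_D$ a point, or a \emph{type-II} plane with $P_C,P_D$ interchanged. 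Since $P_C\cap Y=C$ and $P_D\cap Y=D$ are smooth cubics containing no line, $L$ cannot be $P_C$ or $P_D$, so $L$ is of type I or type II.

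First I would settle the opening assertion. If $Z\in\mathrm{Exc}(\nu)$ it is immediate from part~1 of Lemma~\ref{mainlem}, which forbids $\mca L_Z$ from spanning $\bb P^2$ unless the support line already meets $C$ or $D$ at a $3$-torsion point. If instead $Z=\{l,\tau(l),\tau^2(l)\}$ is a free orbit with $L=\langle\ell,r\rangle$ of type I, I parametrise $L$ by $(s_0:s_1:s_2)$ with $(s_0:s_1)$ on $\ell$ and $(0:0:1)=r$, so that $\tau$ acts by $(s_0:s_1:s_2)\mapsto(s_0:s_1:\zeta s_2)$ and
\[
(f+g)|_{L}=f|_{\ell}(s_0,s_1)+g(r)\,s_2^{3}.
\]
The three lines of $\mca L_Z$ lie on this plane cubic, so it must split into a single $\bb Z_3$-orbit of three lines; as $\tau$ only rescales $s_2$, this forces $f|_{\ell}=A^{3}$ for a linear form $A$ in $(s_0,s_1)$. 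Thus $\ell$ meets $C$ triply at the point $p:=\{A=0\}\cap\ell$, so $\ell=T_pC$ is a flex tangent and $p$ is a flex of $C$; the flexes of a plane cubic are exactly its $3$-torsion points, whence $3p=0$. The three lines $\{A+\zeta^{k}B=0\}$ (with $B$ the linear form in $s_2$) all pass through $p=\{A=B=0\}$, so $\mca L_Z\ni p$. This proves the first statement, type II giving a $3$-torsion point of $D$.

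Conversely, for each flex $p$ of $C$ and each $r\in P_D$ I set $L_{p,r}:=\langle T_pC,r\rangle$; then $(f+g)|_{L_{p,r}}=A^{3}+g(r)s_2^{3}$ is a free $\bb Z_3$-orbit of three lines through $p$ when $r\notin D$, and the triple of the support line when $r\in D$. This yields $Z_{p,r}\in\tilde X_{\bb Z_3}=\bb Z_3\mbox{-}\Hilb(F(Y))$ — the free orbit of the three lines when $r\notin D$, and the exceptional cluster of Lemma~\ref{mainlem} when $r\in D$. The assignment $(p,r)\mapsto Z_{p,r}$ is given by a flat family of length-$3$ clusters, hence a morphism $\Phi:\{p\in C\mid 3p=0\}\times P_D\to\tilde X_{\bb Z_3}$ with image $Q_{(\mathrm{I})}$, and its inverse sends $Z$ to $(p,r)$ with $p$ the common point of the lines of $\mca L_Z$ and $r=L\cap P_D$. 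Because a type-I plane meets $P_D$ in precisely the point $r$, this recovers all of $P_D$, so $\Phi$ is bijective; as $p$ is recovered from $Z$ the nine sheets (one per flex) are disjoint, and $Q_{(\mathrm{I})}\cap Q_{(\mathrm{II})}=\emptyset$ since no plane is both type I and type II. Hence $Q_{(\mathrm{I})}\cong\{p\in C\mid 3p=0\}\times P_D$ is nine disjoint copies of $\bb P^2$, and $Q_{(\mathrm{II})}$ follows symmetrically.

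The main obstacle is the boundary $\{r\in D\}$, where $Z_{p,r}$ leaves the free-orbit region and enters the exceptional divisor of $\nu$: there I must check that the three lines degenerate \emph{flatly} to the length-$3$ cluster of Lemma~\ref{mainlem}, so that $\Phi$ is a genuine isomorphism and not merely a bijective morphism across this curve. I expect to handle this by comparing the flat limit, as $r\to D$, of the ideal of $l\cup\tau(l)\cup\tau^2(l)$ with the explicit ideals $I_{(a:b)}$ and $J_{(c:d)}$ of Lemma~\ref{mainlem}, using smoothness and projectivity of $\tilde X_{\bb Z_3}$ to conclude that $\Phi$, a morphism on the dense open set $r\notin D$, extends over all of $P_D$ and there coincides with those clusters.
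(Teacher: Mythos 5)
Your proof is correct and follows essentially the same route as the paper: both rest on the $\bb Z_3$-invariance of the span (forcing it to be a line in $P_C$ plus a point of $P_D$ or vice versa), deduce that the three lines are concurrent at a flex, i.e.\ $3$-torsion, point, fall back on Lemma \ref{mainlem} for the clusters supported on a fixed line, and parametrize $Q_{(\mathrm{I})}$ by (flex of $C$) $\times$ $P_D$. The only difference is cosmetic: you establish the free-orbit case by factoring $(f+g)|_L=A^3+g(r)s_2^3$ where the paper argues synthetically that the three lines of $P_l\cap Y$ meet $P_l\cap P_C$ in a single point, and you are more explicit than the paper about the flatness check along $r\in D$.
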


\begin{proof}
We first prove the first assertion. 
Let $l$ be in $F(Y)$. 
Since the support of $\mca L_Z$ is $l\cup \tau(l) \cup \tau^2(l)$, we have to consider the following two cases. 

\begin{enumerate}
\item $l,\tau (l)$ and $\tau ^2(l)$  span $\bb P^2$. 
\item  $l=\tau (l) = \tau ^2(l)$, that is, $l$ is in the fixed locus of the $\bb Z_3$ action. 
\end{enumerate}

1. Let us consider the first case. 
Let $P_l := \< l,\tau(l), \tau ^2(l) \>$ be the plane spanned by $l,\tau(l)$ and $\tau ^2(l)$. 
Then $\bb Z_3$ acts on $P_l$. The eigenvalue of the $\bb Z_3$ action on $\bb P^5$ is 1 or 
$\zeta = \exp (\frac{2 \pi \sqrt{-1}}{3})$. 
So the eigenvalue of the $\bb Z_3$ action on the plane $P_l$ is also 1 or $\zeta$. 
Therefore we may assume that, for suitable coordinates of $P_l$, 
the group action $\bb Z_3{}^{ \curvearrowright} P_l$ is given by 
\[
\bb Z_3 {}^{\curvearrowright} P_l, (x:y:z) \mapsto (x:y:\zeta z).
\]
Therefore the fixed locus $\mathrm{Fix}_{\bb Z_3}(P_l)$ of the $\bb Z_3$ action on $P_l$ is the disjoint union of a line $m_{\mathrm{fix}}$ and a point $q_{\mathrm{fix}}$. 
Since $\mathrm{Fix}_{\bb Z_3}(\bb P^5)$ is $P_C \cup P_D$, 
we may assume that $m_{\mathrm{fix}} = P_l \cap P_C,\ q_{\mathrm{fix}} = P_l \cap P_D$. 
Put $p=l \cap m_{\mathrm{fix}}$. Then $p$ is contained in $\mathrm{Fix}_{\bb Z_3}(Y)$. 
Since $\mathrm{Fix}_{\bb Z_3}(Y)=C \cup D$, $p \in C$. 
We will prove that the point $p$ is a 3-torsion point of $C$. 
To prove this, 
it is enough to show that $m_{\mathrm{fix}}$ meets $C$ only at the point $p$. 
So we assume that there exists another point $p^{\prime }$ on $C \cap m_{\mathrm{fix}}$. 
However $P_l \cap Y$ must be three lines and these three lines meet at exactly one point $p$. 
Since $p^{\prime}$ is in $Y$, this is contradiction. 
Therefore $p$ is a 3-torsion point of $C$. Hence $l$ passes through a 3-torsion point of $C$.

2. Let us consider the second case. We remark that this case is the limit of the first case. 
Let $Z $ be in $\tilde X_{\bb Z_3}$ such that the support of $\mca L_Z$ is $l$. 
By the proof of Lemma \ref{mainlem}, 
$\mca L_Z$ which spans $\bb P^2$ always passes through a 3-torsion point. 
Hence we get the first assertion.  

We shall prove the second assertion for $Q_{(\rm{I})}$. 
Assume that $Z\in Q_{(\rm{I})}$. By the above argument, the plane $\< \mca L_Z \>$ spanned by $\mca L_Z$ intersect $P_D$ at one point and 
$\<\mca L_Z\> \cap P_C$ is the tangent line of $C$ at a 3-torsion point. 
Conversely we choose a 3-torsion point $p$ of $C$ and a point $q$ from the plane $P_D$. 
Let $l_C$ be the tangent line of $C$ at $p$, and $\< l_C, q  \> $ the plane spanned by $l_C$ and $q$.  
There uniquely exists $Z\in \tilde X_{\bb Z_3}$ such that $\mca L_Z$ spans $\<l_C, q  \>$. 
This correspondence gives isomorphism between $Q_{(\rm{I})}$ and $\{p \in C | 3p=0 \} \times P_D$. 
Therefore $Q_{(\mathrm{I})}$ is isomorphic to $\{p \in C | 3p=0 \} \times P_D$. 
Similarly we can show that $Q_{(\mathrm{II})}$ is isomorphic to $\{q \in D | 3q=0 \} \times P_C$. 
\end{proof}

To describe the indeterminacy of $\varphi:K^2(C \times D) \dashrightarrow \tilde X_{\bb Z_3}$, 
we put 
\[
\bb P_{(\mathrm{I})}:=\Bigl\{ \{(p,q_1),(p,q_2),(p,q_3) \} \in K^2 (C \times D) \Big| 3p=0 \Bigr\}, 
\]
and 
\[
\bb P_{(\mathrm{II})}:=\Bigl\{ \{(p_1,q),(p_2,q),(p_3,q) \} \in K^2 (C \times D) \Big| 3q=0 \Bigr\}. 
\]
We remark that 
$$\bb P_{(\rm{I})} \cong \{ p \in C | 3p=0 \} \times P_D^{\vee},$$ and 
$$\bb P_{(\rm{II})} \cong \{ p \in C | 3p=0 \} \times P_D^{\vee},$$ 
where  $P_D^{\vee} $ (resp. $P_C^{\vee}$) is the dual of $P_D$ (resp. $P_C$). 
Moreover, $\bb P_{(\mathrm{I})}$ and $\bb P_{(\mathrm{II})}$ are respectively isomorphic to 9 disjoint copies of $\bb P^2$. 

\begin{thm}\label{mainthm} 
The birational map $\psi:\tilde X_{\bb Z_3}  \dashrightarrow K ^2 (C \times D)$ 
is decomposed into the Mukai flop on $Q$, 
and $\varphi$ is decomposed into the Mukai flop on $\bb P_{(\mathrm{I})}\cup \bb P_{(\mathrm{II})}$. 
In particular, the indeterminacy of $\psi $ is $Q$, and 
the indeterminacy of $\varphi $ is $\bb P_{(\mathrm{I})}\cup \bb P_{(\mathrm{II})}$. 
\end{thm}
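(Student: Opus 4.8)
The plan is to show that $\psi$ and $\varphi$ are mutually inverse Mukai flops by first pinning down where they fail to be isomorphisms and then analyzing the local geometry of the centers inside the two symplectic $4$-folds. First I would note that, away from $Q$, every $Z \in \tilde X_{\bb Z_3}$ has $\mca L_Z$ spanning $\bb P^3$: by Lemma \ref{mainlem} and Lemma \ref{Q}, $\mca L_Z$ spans only $\bb P^2$ exactly on $Q$. On the open set $\tilde X_{\bb Z_3}\setminus Q$ the construction of Proposition \ref{mainprop} applies verbatim — the spanned $\bb P^3$ cuts out a smooth cubic surface $S=W_l\cap Y$, and the unique triple of pairwise disjoint lines $m_1,m_2,m_3$ yields a well-defined point of $K^2(C\times D)$ — so $\psi$ is a morphism there; symmetrically $\varphi$ is a morphism on $K^2(C\times D)\setminus(\bb P_{(\mathrm I)}\cup\bb P_{(\mathrm{II})})$. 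As $\varphi=\psi^{-1}$, these restrict to mutually inverse isomorphisms between $\tilde X_{\bb Z_3}\setminus Q$ and $K^2(C\times D)\setminus(\bb P_{(\mathrm I)}\cup\bb P_{(\mathrm{II})})$, so a priori the indeterminacy of $\psi$ is contained in $Q$ and that of $\varphi$ in $\bb P_{(\mathrm I)}\cup\bb P_{(\mathrm{II})}$.

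Next I would identify the local model. By Lemma \ref{Q} each connected component of $Q$ and of $\bb P_{(\mathrm I)}\cup\bb P_{(\mathrm{II})}$ is a copy of $\bb P^2$ sitting in a holomorphic symplectic $4$-fold. Every such $\bb P^2$ is automatically Lagrangian: the restriction of the symplectic form is a holomorphic $2$-form on $\bb P^2$, and $H^0(\bb P^2,\Omega^2_{\bb P^2})=0$ forces it to vanish, so $\bb P^2$ is isotropic of half the dimension. Contraction with the symplectic form then gives a canonical isomorphism $N_{\bb P^2/\tilde X_{\bb Z_3}}\cong\Omega^1_{\bb P^2}$ (and likewise on the $K^2(C\times D)$ side). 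This is precisely the configuration in which a Mukai flop is defined.

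I would then perform the flop and match it with $\psi$. Blowing up $\tilde X_{\bb Z_3}$ along one component $\bb P^2\subset Q_{(\mathrm I)}$ produces an exceptional divisor $\bb P(\Omega^1_{\bb P^2})$, which is the incidence flag $\{(q,\ell):q\in\ell\}\subset P_D\times P_D^\vee$ and carries a second $\bb P^1$-bundle structure over the dual plane $P_D^\vee$; contracting along this second ruling replaces $\bb P^2$ by $P_D^\vee$ and yields a smooth symplectic $4$-fold. To see that the result is $K^2(C\times D)$ with the map $\psi$, I would read off the duality from Lemma \ref{Q}: a point of $Q_{(\mathrm I)}$ is a pair $(p,q)$ with $3p=0$ in $C$ and $q\in P_D$, recording the plane $\<l_C,q\>$ spanned by the tangent line $l_C$ of $C$ at $p$ and by $q$, whereas a point of $\bb P_{(\mathrm I)}$ records $p$ together with the line $\ell\in P_D^\vee$ carrying a triple of collinear points $q_1,q_2,q_3\in D$ with $q_1+q_2+q_3=0$. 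As $Z$ degenerates onto $Q_{(\mathrm I)}$ the three points $q_i$ collapse to $q$ while remembering their supporting line $\ell\ni q$, so the flag point $(q,\ell)$ simultaneously records $q\in P_D$ (the original center) and $\ell\in P_D^\vee$ (the flopped center). Hence the flop interchanges $q$ with its dual line, which is exactly the $\bb P^2\leftrightarrow(\bb P^2)^\vee$ duality, identifying the contraction with $\psi$; the case of $Q_{(\mathrm{II})}$ and $\bb P_{(\mathrm{II})}$ is symmetric under exchanging $C$ and $D$. Since a Mukai flop is never an isomorphism along its center, the indeterminacy of $\psi$ is exactly $Q$ and that of $\varphi$ exactly $\bb P_{(\mathrm I)}\cup\bb P_{(\mathrm{II})}$.

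The step I expect to be the main obstacle is this last identification: verifying that the abstract blow-up/blow-down reproduces the specific map $\psi$ rather than merely some flop, and in particular that the two rulings of the exceptional flag correspond respectively to the point $q$ and to the collinear triple on its dual line. Making this precise means tracking the degeneration of the family $\mca L_Z$ through the local coordinates $(u_1,u_4,v_1,v_4)$ and the ideals $I_{(a:b)},J_{(c:d)}$ of Lemma \ref{mainlem}, and checking that the resulting data matches the incidence structure of $\bb P(\Omega^1_{\bb P^2})$ on each of the nine components of each type. Once this local bookkeeping is carried out, the global statement follows by assembling the disjoint components.
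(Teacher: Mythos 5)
Your high-level picture is right --- $\psi$ and $\varphi$ are mutually inverse, the centers $Q$ and $\bb P_{(\mathrm I)}\cup\bb P_{(\mathrm{II})}$ are disjoint unions of Lagrangian planes, and the transformation is a Mukai flop --- but the two steps you treat as routine are where the actual work lies, and as stated they have gaps. First, your opening claim that on $\tilde X_{\bb Z_3}\setminus Q$ ``the construction of Proposition \ref{mainprop} applies verbatim'' because the spanned $\bb P^3$ cuts out a \emph{smooth} cubic surface is false: whenever the support of $\mca L_Z$ is a single line through $p\in C$ and $q\in D$, or the associated triples $\{p_i\}$, $\{q_i\}$ degenerate, the surface $L\cap Y$ is singular --- in the worst cases it is a surface all of whose lines pass through one point, or a union of three planes --- and the ``unique triple of pairwise disjoint lines'' argument breaks down. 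The paper has to treat these degenerations case by case (Lemma \ref{smooth}, and the fiber computations in Proposition 3.11, where the fibers over the singular locus of $\bar K^2(C\times D)$ turn out to be $\bb P^1$'s, trees, and contracted Hirzebruch surfaces); without this you have not even established that the indeterminacy of $\psi$ is contained in $Q$.

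Second, and more seriously, knowing that $\psi$ is an isomorphism off the Lagrangian planes does not by itself show that $\psi$ \emph{is} the Mukai flop: you must produce an actual isomorphism between the flopped $4$-fold $W^+$ and $K^2(C\times D)$ extending $\psi$, and the ``local bookkeeping'' you propose --- matching the incidence structure of $\bb P(\Omega^1_{\bb P^2})$ with the duality $q\leftrightarrow\ell$ --- yields at best a set-theoretic correspondence along the centers. The paper closes this gap by a global argument: it realizes the blow-up $W$ concretely inside $\tilde X_{\bb Z_3}\times G(4,6)$ (Proposition \ref{blowup}), constructs a morphism $\mu^+:W^+\to\bar K^2(C\times D)$ (Lemma \ref{factor}), shows $\mu^+$ is an isomorphism over the smooth locus (Lemma \ref{smooth}), proves that $-E$ is $\mu^+$-ample so that $W^+$ is \emph{projective} --- a point you do not address, since the Fujiki--Nakano contraction a priori only produces a Moishezon manifold --- and then invokes the uniqueness of projective crepant resolutions with irreducible exceptional divisor \cite{F-N} to conclude $W^+\cong K^2(C\times D)$. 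Some substitute for this identification step (uniqueness of crepant resolutions, or a directly constructed inverse morphism) is indispensable; as written your argument does not supply one.
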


The proof of Theorem \ref{mainthm} is long. 
Before starting the proof, we would like to explain the strategy of it.

\begin{step}\label{1}
We will blow up $\tilde X_{\bb Z_3}$ along $Q$. We denote it by $\pi:W \to \tilde X_{\bb Z_3}$. 
Now let us consider the situation when an irreducible symplectic 4-fold $X$ contains $P$ which is isomorphic to $\bb P^2$. 
Since $\bb P^2$ is a Lagrangian manifold, the normal bundle $N_{P/X}$ of $P$ in $X$ is isomorphic to 
the cotangent bundle $\Omega_{P}^1$ of $P$. 
Let ${\it B}_{P} X \to X$ be the blow up along $P$. Then the exceptional locus is isomorphic to 
\[
{\it B}_{P} X \supset \Gamma := \{ (x,H) \in P \times P^{\vee} | x \in H \} \to P \subset X,
\]
where $P^{\vee}$ is the dual of $P$. 
By Fujiki-Nakano contraction theorem, there exists a birational contraction morphism ${\it B}_{P} X \to X^+$ such that 
the exceptional locus is isomorphic to $\Gamma \to P^{\vee}$. 
So, we will blow down $W$ in another direction $\pi ^+ : W \to W^{+}$. 
Here $W^+$ is a Moishezon 4-fold. We shall prove that $W^+$ is actually a projective manifold in Proposition 3.11.
\end{step}

\begin{step}\label{2}
We will define a morphism $\mu^{\prime }: W \to \bar K^2(C \times D)$. 
We will prove that $\mu ^{\prime} $ factors through $W^+$. 
Namely there is a map $\mu ^+: W^+ \to \bar K^2(C \times D)$ such that $\mu ^+ \circ \pi ^+ = \mu ^{\prime}$. 
\end{step}
 
\begin{step}\label{3}
We will prove that $W^+$ is isomorphic to $K^2(C \times D)$. 
The following diagram illustrates three steps above. 
\[
\xymatrix{
& W\ar[rdd]_{\mu^{\prime}} \ar[rd]^{\pi^{+}} \ar[ld]_{\pi} &   \\
\tilde X_{\bb Z_3}  \ar@{--}[r] & \ar@{-->}[r]  & W^{+}  \ar[d]^{\mu ^+}   & K^2(C \times D) \ar[dl]_{\mu} \\
                                                        &                     &            \bar K^2(C \times D) & 
}
\]
\end{step}

Let us consider Step \ref{1}. 
We first construct the blow up $\pi : W \to \tilde  X_{\bb Z_3}$. 

\begin{prop}\label{blowup}
Let $G(4,6)$ be the Grassmann manifold which parametrizes 4 dimensional linear subspaces of $\bb C ^6$. 
We define $W$ by  
\begin{multline*}
W := \{ (Z, L)  \in \tilde X_{\bb Z_3} \times G(4,6)  
      | \mca L _Z \subset L,\  \\
    \mathrm{both}\ L\cap P_C\mathrm{\ and\ }L\cap P_D\mathrm{\ are\ lines} \}. 
\end{multline*}
Then the natural projection $\pi :W \to \tilde X_{\bb Z_3}$ is the blow up along $Q$. 
\end{prop}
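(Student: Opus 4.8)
The plan is to verify that $\pi$ is a proper birational morphism which is an isomorphism over $\tilde X_{\bb Z_3}\setminus Q$ and a $\bb P^1$-bundle over $Q$, to identify the exceptional locus with the projectivized normal bundle $\bb P(N_{Q/\tilde X_{\bb Z_3}})$, and then to conclude by the universal property of the blow-up. First I would treat the locus away from $Q$. For $Z\notin Q$ the family $\mca L_Z$ spans a $\bb P^3$, so there is a unique $4$-dimensional subspace $L=\<\mca L_Z\>$ with $\mca L_Z\subset L$; by the computations of Lemma \ref{mainlem} (and the configuration used in Proposition \ref{mainprop}) this $L$ meets $P_C$ and $P_D$ in lines. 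Hence the assignment $Z\mapsto(Z,\<\mca L_Z\>)$ is a morphism inverse to $\pi$ there, so $\pi$ is an isomorphism over $\tilde X_{\bb Z_3}\setminus Q$.

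Next I would compute the fibre over a point $Z\in Q_{(\mathrm I)}$. By Lemma \ref{Q} the plane $P=\<\mca L_Z\>$ meets $P_C$ in the tangent line $l_C$ of $C$ at a $3$-torsion point $p$ and meets $P_D$ in a single point $q$. Writing $V\subset\bb C^6$ for the $3$-dimensional subspace corresponding to $P$, a short computation in the quotient $\bb C^6/V$ shows that the $4$-dimensional spaces $L\supset P$ with $L\cap P_D$ a line form a $\bb P^1$, namely the pencil of lines in $P_D$ through $q$, and that every such $L$ automatically satisfies $L\cap P_C=l_C$. Thus $\pi^{-1}(Z)\cong\bb P^1$, and letting $q$ vary over $P_D$ the exceptional divisor over each component of $Q_{(\mathrm I)}$ becomes the incidence variety $\{(q,\ell)\in P_D\times P_D^{\vee}\mid q\in\ell\}$; the same analysis applies to $Q_{(\mathrm{II})}$ with $P_C$ and $P_D$ interchanged.

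I would then identify $E:=\pi^{-1}(Q)$ with $\bb P(N_{Q/\tilde X_{\bb Z_3}})$. Since each component of $Q$ is a Lagrangian $\bb P^2$ inside the symplectic fourfold $\tilde X_{\bb Z_3}$, its normal bundle is $\Omega^1_{\bb P^2}$ (as already used in Step \ref{1}), and $\bb P(\Omega^1_{\bb P^2})$ is exactly the incidence variety $\{(x,H)\in\bb P^2\times(\bb P^2)^{\vee}\mid x\in H\}$. This matches the fibre description above and shows $E$ is, over each component of $Q$, the projectivized cotangent bundle of that $\bb P^2$, in particular a smooth irreducible divisor carrying the relative $\mca O(1)$ on its $\bb P^1$-fibres.

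Finally I would establish smoothness of $W$ and pin down the scheme structure of $\pi^{-1}(Q)$, which is where the explicit data of Lemma \ref{mainlem} are indispensable: near a point of $Q$ one writes the Grassmann coordinates of $L\in G(4,6)$ together with the incidence equations $\mca L_Z\subset L$ and the two conditions that $L\cap P_C$ and $L\cap P_D$ be lines, using the chart and the ideals $I_{(a:b)},J_{(c:d)}$ of Lemma \ref{mainlem}, and checks that these cut out a smooth fourfold in which $E$ is a smooth Cartier divisor with $\mca O_W(-E)|_{\text{fibre}}\cong\mca O_{\bb P^1}(1)$. Granting this, $\pi^{*}I_Q\cdot\mca O_W$ is invertible, so $\pi$ factors through $\mathit{Bl}_Q\tilde X_{\bb Z_3}$; comparing the two morphisms, both isomorphisms over $\tilde X_{\bb Z_3}\setminus Q$ and both realizing $\bb P(N_{Q/\tilde X_{\bb Z_3}})$ as their exceptional divisor, forces the induced map to be an isomorphism. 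I expect the main obstacle to be precisely this last step: the set-theoretic fibre computation is immediate, but confirming that $W$ is smooth across $E$ and that $\pi^{-1}(Q)$ carries the reduced blow-up structure, rather than some thickening, requires the honest local calculation in the Grassmann chart.
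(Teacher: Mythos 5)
Your proposal is correct in substance and rests on the same geometric facts as the paper's proof --- the uniqueness of $L=\<\mca L_Z\>$ for $Z\notin Q$, the description from Lemma \ref{Q} of $\<\mca L_Z\>$ for $Z\in Q$, and the identification (via the Lagrangian structure of each component of $Q$) of the exceptional locus with the incidence variety $\{(x,H)\in\bb P^2\times(\bb P^2)^\vee\mid x\in H\}$ --- but you run the comparison in the opposite direction. The paper constructs an explicit map $f:\mathit{B}_Q\tilde X_{\bb Z_3}\to W$, defined away from the exceptional divisors as $\pi^{-1}\circ\pi'$ and on $(\pi')^{-1}(Q_{(\mathrm I)})$ by sending $(p,x,H)$ to $(Z_{(p,x)},\<l_{C,p},H\>)$, and then concludes simply from the bijectivity of $f$; this implicitly leans on normality (or smoothness) of $W$, which the paper never verifies. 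You instead invoke the universal property of the blow-up, which obliges you to check that $\pi^{-1}(Q)$ is a Cartier divisor with the right scheme structure and that $W$ is smooth along it --- exactly the ``honest local calculation'' you flag at the end. So your route is more demanding but also more airtight where the paper is terse; conversely, the paper's explicit formula for $f$ on the exceptional divisor is the cleanest way to see that the two $\bb P^1$-bundles over $Q$ match, and you could shortcut your final step by writing that map down rather than re-deriving the fibre structure from Grassmann charts. Either way the essential input is the same: Lemma \ref{Q} plus the fact that a Lagrangian $\bb P^2$ in a symplectic $4$-fold has normal bundle $\Omega^1_{\bb P^2}$.
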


\begin{proof}
Let $\pi^{\prime} : \it{B}_Q\tilde X_{\bb Z_3} \to \tilde X_{\bb Z_3}$ be 
the blow up of $\tilde X_{\bb Z_3} $ along $Q$. 
Let $\mathrm{Exc}(\pi ^{\prime })$ be the exceptional divisor of $\pi^{\prime}$ and $\mathrm{Exc}(\pi)$ the exceptional divisor of $\pi $. 

We will define the morphism $f: \it{B}_{Q}\tilde X_{\bb Z_3} \to W$. 
Outside of the exceptional divisors, we define $f$ by 
$f:=\pi^{-1} \circ \pi ^{\prime}$. 
On $\mathrm{Exc}(\pi^{\prime})$, we define $f$ in the following way. 
On $(\pi')^{-1}(Q_{(\mathrm{I})})$, the morphism $\pi ^{\prime}$ is defined as follows: 
\begin{eqnarray*}
\{ (p,x,H) \in  C \times  P_D \times P_D^{\vee} |3p=0, x \in H \} & \stackrel{\pi'}{\to} & \{ (p,x) \in C\times P_D | 3p=0 \}\\ 
\pi^{\prime}(p,x,H) &=& (p,x)
\end{eqnarray*}
By Lemma \ref{Q}, $(p,x)$ determine $Z_{(p,x)} \in Q_{(\rm{I})}$ uniquely. 
Here let $l_{C,p}$ be the tangent line of $C$ at $p$. 
Hence we define a morphism $f :\mathrm{Exc}(\pi^{\prime}) \to \mathrm{Exc}(\pi)$ by 
$f(p,x,H) = (Z_{(p,x)},  \< l_{C,p}, H \>  )$ where $\< l_{C,p}, H \>$ is the projective linear space spanned by $l_{C,p}$ and $H$. 
In the same way, we can define $f$ on $(\pi')^{-1}(Q_{(\mathrm{II})})$. 
Since $f$ is bijective, $W$ is the blow up along $Q$. 
\end{proof}

As we remarked, 
by Fujiki-Nakano contraction theorem, there exists a contraction morphism 
$\pi ^+ : W \to W^+$. 
Note that on $\mathrm{Exc}(\pi ^+)$, $\pi^+$ can be identified as  
\[
W \supset \mathrm{Exc}(\pi^+) \ni (Z,L) \to L \in G(4,6). 
\]
We choose $w \in \pi ^+({\rm Exc}(\pi^+))$ and fix it. 
We remark that for each $(Z,L) \in (\pi ^+)^{-1}(w)$, $L$ is uniquely determined by $w$.

Next let us consider Step \ref{2}. 
We first define $\mu ^{\prime} : W  \to \bar K^2(C\times D)$ by
\[
\mu ^{\prime } (Z,L) := \{ (p_i,q_i) \}_{i=1}^3\ \mathrm{where\ } \< p_i, q_i \> \subset L \cap Y,\ \< p_i,q_i \> \cap \mca L_Z \neq \emptyset . 
\]
The map $\mu^{\prime}$ is well-defined. 

\begin{lem}\label{factor}
The map $\mu^{\prime}$ factors through $W^+$, 
that is, there is a map $\mu ^+:W^+ \to \bar K^2(C \times D)$ such that 
$\mu ^+ \circ \pi ^+ = \mu ^{\prime}$. 
\end{lem}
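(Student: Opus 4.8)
The plan is to deduce the factorization from the universal property of the contraction $\pi^+ : W \to W^+$. Since $\pi^+$ is a projective birational morphism onto $W^+$, which is normal (indeed a smooth Moishezon $4$-fold produced by the Fujiki--Nakano contraction theorem) and satisfies $(\pi^+)_*\mca O_W = \mca O_{W^+}$, a morphism $\mu' : W \to \bar K^2(C\times D)$ descends to a morphism $\mu^+ : W^+ \to \bar K^2(C\times D)$ with $\mu^+\circ \pi^+ = \mu'$ precisely when $\mu'$ is constant on every fibre of $\pi^+$; here I use that the target $\bar K^2(C\times D)$ is separated and that the non-trivial fibres are connected. Thus the whole problem reduces to the claim that $\mu'$ contracts each fibre of $\pi^+$ to a point.

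Next I would identify those fibres. Away from $\mathrm{Exc}(\pi^+)$ the morphism $\pi^+$ is an isomorphism, so there is nothing to check. On $\mathrm{Exc}(\pi^+)$, recall from Proposition \ref{blowup} that $\mathrm{Exc}(\pi) \cong \Gamma = \{(x,H) : x\in H\}$ fibres over $Q$ by $(x,H)\mapsto x$, while $\pi^+$ contracts the opposite ruling $\Gamma \to P^\vee$, $(x,H)\mapsto H$. On the exceptional locus one has $\pi^+(Z,L)=L$, so fixing a point $w\in P^\vee$ fixes the four-plane $L$, and $(\pi^+)^{-1}(w)$ is the $\bb P^1$ consisting of those $(Z,L)$ with this single $L$ and with $Z$ ranging over the corresponding pencil: concretely, on a component of $Q_{(\mathrm{I})}$ determined by a $3$-torsion point $p\in C$ and a line $H\subset P_D$, one has $L=\langle l_{C,p},H\rangle$ and $Z=Z_{(p,x)}$ with $x$ running through $H$. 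Hence on each fibre $L$ is constant while $Z$ varies.

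It then remains to show that $\mu'(Z,L)$ is independent of $Z$ along such a fibre. Since $L$ is fixed, the cubic surface $S:=L\cap Y$ and its $27$ lines are fixed, and because $\tau$ preserves both $P_C$ and $P_D$ it preserves $L=\langle l_{C,p},H\rangle$ and hence acts on $S$. By the configuration analysis in the proof of Proposition \ref{mainprop}, the three lines $\langle p_i,q_i\rangle$ selected by $\mu'$ are exactly the unique triple of mutually skew $\bb Z_3$-invariant lines of $S$ (the three conic classes $C_4,C_5,C_6$), a triple intrinsic to $S$ and therefore to $L$ alone. The incidence condition $\langle p_i,q_i\rangle\cap \mca L_Z\neq\emptyset$ in the definition of $\mu'$ merely selects this same triple for every $Z$ in the fibre, so $\mu'(Z,L)$ depends only on $L=\pi^+(Z,L)=w$. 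This gives constancy on the fibre and completes the reduction.

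I expect the main obstacle to be precisely this last step, namely verifying constancy at the distinguished point of the fibre where $\mca L_Z$ degenerates to span only a $\bb P^2$. There the support of $\mca L_Z$ collapses into the fixed locus, and one must check, using the explicit ideals $I_{(a:b)}$ and $J_{(c:d)}$ of Lemma \ref{mainlem}, that $\mca L_Z$ still meets each of the three invariant skew lines, so that the selected triple is the limit of the triples obtained at the generic points of the fibre. Separatedness of $\bar K^2(C\times D)$ then forces the value at the special point to coincide with the common value elsewhere, and once this continuity is in hand the rigidity argument of the first paragraph produces the desired $\mu^+$.
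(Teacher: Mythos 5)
Your proposal is correct and follows essentially the same route as the paper: reduce the factorization to constancy of $\mu'$ on the fibres of $\pi^+$, observe that on a non-trivial fibre the $4$-plane $L$ is fixed while only $Z$ varies, and conclude that $\mu'(Z,L)$ is determined by $L$ alone. One small imprecision: on the relevant fibres $L\cap P_C$ is the tangent line of $C$ at a $3$-torsion (inflection) point $p$, so the cubic surface $L\cap Y$ is degenerate and the three $\bb Z_3$-invariant lines $\langle p,q_i\rangle$ (with $\{q_1,q_2,q_3\}=L\cap P_D\cap D$) are concurrent at $p$ rather than mutually skew as in the smooth configuration of Proposition \ref{mainprop}; this does not affect the conclusion, since each of these lines contains $p$ and hence meets $\mca L_Z$ for every $Z$ in the fibre, giving directly $\mu'(Z,L)=\{(p,q_1),(p,q_2),(p,q_3)\}$ as in the paper.
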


\begin{proof}
Let $w $ be a point of $W^+$. 
We define $\mu ^+ $ in the following way:
\[
\mu ^{+}(w):=\mu^{\prime }\bigl( (\pi^+)^{-1}(w) \bigr). 
\]
If $\mu ^+$ is well-defined then the assertion of Lemma \ref{factor} is clear. 
So, we will prove that $\mu^{\prime }\bigl( (\pi^+)^{-1}(w) \bigr)$ is independent of the fiber $(\pi^+)^{-1}(w)$. 
If $w $ is not in $\pi ^+ (\pi ^{-1}(Q))$, then $(\pi ^+)^{-1}(w) \cap \pi ^{-1}(Q) = \emptyset$. 
Since the fiber is unique, $\mu ^+$ is well-defined. 

Assume that $w $ is in $\pi^+( \pi^{-1}(Q))$. 
Let $(Z,L) \in (\pi^+)^{-1}(w)$. 
As we remarked, 
the linear space $L$ is uniquely determined by $w$. 
Then exactly one of the following two cases will happen. 
\begin{enumerate}
\item $L \cap P_C$ is a tangent line of $C$ at a 3-torsion point. 

\item $L \cap P_D$ is a tangent line of $D$ at a 3-torsion point. 
\end{enumerate}
 
We may assume that the first case occurs. 
By Proposition \ref{blowup} $(\pi^+)^{-1}(w)$ is parametrized by the line $L \cap P_D$. 
Assume that $L \cap P_D \cap D$ consists of three points $\{ q_1,q_2,q_3 \}$. 
For each $(Z,L) \in (\pi^+)^{-1} (w) $, 
\[
\mu ^{\prime }(Z, L) =  \{ (p,q_1), (p,q_2), (p,q_3) \} \in \bar K^2(C \times D).
\] 
Therefore the map $\mu ^+$ is well-defined. 
\end{proof}

We consider Step \ref{3} to complete the proof of Theorem \ref{mainthm}. 
We will prove that the map $\mu^+$ is isomorphism on the smooth locus of $\bar K^2(C \times D)$ in the following Lemma.

\begin{lem}\label{smooth}
The map $\mu ^+$ is an isomorphism on the smooth locus of $\bar K^2(C \times D)$. 
\end{lem}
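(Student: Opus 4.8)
The plan is to prove Lemma \ref{smooth} by Zariski's main theorem: I will show that $\mu^+$ is a proper birational morphism which is quasi-finite over the smooth locus $U$ of $\bar K^2(C\times D)$, and then invoke that a proper birational morphism with finite fibres onto a normal variety is an isomorphism. First I would identify $U$ concretely. Writing a point of $\bar K^2(C\times D)$ as an unordered triple $\{(p_1,q_1),(p_2,q_2),(p_3,q_3)\}$ with $p_1+p_2+p_3=0$ in $C$ and $q_1+q_2+q_3=0$ in $D$, the $\mathfrak S_3$-stabiliser is non-trivial exactly when two of the points $(p_i,q_i)$ coincide as points of $C\times D$; hence $U$ is the locus of triples of three distinct points, and in particular $U$ is smooth, so normal.

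Next I would record that $\mu^+$ is proper (both $W^+$, by Proposition 3.11, and $\bar K^2(C\times D)$ being projective) and birational of degree one. Birationality is inherited from Proposition \ref{mainprop}: over a generic triple the three lines $m_i=\langle p_i,q_i\rangle$ are pairwise disjoint and span a $\bb P^3=L$, the cubic surface $L\cap Y$ is smooth, and the unique triple $l_1,l_2,l_3$ of mutually disjoint lines each meeting every $m_j$ recovers $\mca L_Z$, hence $Z$, uniquely; thus the generic fibre of $\mu^+$ is a single point and $\mu^+\circ\pi^+=\mu'$ is generically injective.

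The heart of the argument is to show that $\mu^+$ is quasi-finite over all of $U$, i.e. that every positive-dimensional fibre of $\mu^+$ lies over the singular locus $\bar K^2(C\times D)\setminus U$. Away from the flopped loci this follows from the $27$-line reconstruction above, since $L$ (the span of the $m_i$) and then $Z$ are uniquely determined. Over the flopped loci I would use the local pictures of Lemma \ref{mainlem} together with the fibre description in Lemma \ref{factor}: for $w\in\mathrm{Exc}(\pi^+)$ lying over $\bb P_{(\mathrm{I})}$, the line $L\cap P_C$ is the tangent line of $C$ at a $3$-torsion point $p$ and $\mu^+(w)=\{(p,q_1),(p,q_2),(p,q_3)\}$ with $\{q_1,q_2,q_3\}=L\cap P_D\cap D$. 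Conversely such a triple determines $p$, hence $L\cap P_C$, and, since $q_1+q_2+q_3=0$ forces the $q_i$ collinear in $P_D$, determines the line $L\cap P_D$ and therefore $L=\langle L\cap P_C,\,L\cap P_D\rangle$; so $w$ is recovered uniquely and $\mu^+$ stays injective there. The symmetric argument handles $\bb P_{(\mathrm{II})}$.

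The step I expect to be the main obstacle is precisely this control of the fibres over the non-generic part of $U$: one must verify that when the triple degenerates — for instance when $L\cap P_C$ becomes tangent to $C$ so that $L\cap Y$ acquires a node, or along the flopped copies of $\bb P^2$ — no contraction occurs, so that all such fibres remain $0$-dimensional and meet $U$ only as single points. Granting quasi-finiteness, the restriction of $\mu^+$ over $U$ is proper, quasi-finite and birational onto the normal variety $U$, hence finite and then an isomorphism, which is Lemma \ref{smooth}. I would finally remark that this isomorphism over $U$ is compatible with the Hilbert--Chow morphism $\mu:K^2(C\times D)\to\bar K^2(C\times D)$, which is itself an isomorphism over $U$, thereby setting up the comparison of the two crepant resolutions over the singular locus that is needed to complete Step \ref{3}.
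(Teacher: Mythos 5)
Your overall strategy coincides with the paper's: both arguments reduce the lemma to showing that every fiber of $\mu^+$ over the smooth locus $U$ of $\bar K^2(C\times D)$ is a single point, and then conclude from properness, birationality and normality of $U$. Your identification of $U$ as the locus of triples of three pairwise distinct points of $C\times D$ is correct; your treatment of the generic stratum via the $27$-line reconstruction is the paper's case~1; and your analysis over the flopped loci (where $p_1=p_2=p_3=p$ with $3p=0$, the $\mu'$-fiber is parametrized by the line $L\cap P_D$, and $\pi^+$ contracts it) is the paper's case~4.

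There is, however, a genuine gap at exactly the place you flag and then defer with ``granting quasi-finiteness'': the stratum of $U$ where two but not all three of the $p_i$ coincide, say $p_1=p_2\neq p_3$ with the $q_i$ distinct (and its further degeneration $q_2=q_3$). These triples still consist of three distinct points of $C\times D$, so they lie in $U$ and cannot be absorbed into the singular locus; yet $L\cap P_C$ is then tangent to $C$, the cubic surface $L\cap Y$ is singular, and the $27$-line uniqueness argument no longer applies. The paper closes precisely this case (its cases 2 and 3) as follows: the line $l=\<p_1,q_3\>$ meeting all three lines $\<p_i,q_i\>$ lies in the fixed locus of the $\bb Z_3$-action, so the candidate points $Z$ with $\mca L_Z$ supported on $l$ form the tree $\mca I\cup\mca J$ of Lemma \ref{mainlem}; by that lemma the linear spaces $L_{(a:b)}$ and $M_{(c:d)}$ spanned by the various $\mca L_Z$ are pairwise distinct, so the datum of $L$ (which is determined by the triple, being spanned by $l_C$ and $l_D$) singles out a unique $Z$, and since this fiber of $\mu'$ misses $\pi^{-1}(Q)$ it is untouched by the flop. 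Your proof needs this additional application of Lemma \ref{mainlem} before the appeal to Zariski's main theorem is justified.
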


\begin{proof}
Take $\{(p_i,q_i)  \}_{i=1}^{3}$ from the smooth locus of $\bar K^2(C \times D)$. 
We will prove that the fiber $(\mu ^+)^{-1}(\{(p_i,q_i)  \})  $ (for short $\{(p_i,q_i)\}_{\mu^+}$) is one point. 
Let $l_C \subset P_C$ (resp. $l_D \subset P_D$) be the line passing through three points $\{ p_i \}_{i=1}^3$ (resp. $\{  q_i \}_{i=1}^3$). 
We have to consider the fibers of $\mu ^+$ in the following cases. 
\begin{enumerate}
\item $ \bar K^2(C \times D) \ni  \{ (p_i,q_i)  \}_{i=1}^{3}$ 
where $p_i \neq p_j$ $(i\neq j)$, and $q_i \neq q_j$ $(i\neq j)$. 
\item $ \bar K^2(C \times D) \ni  \{ (p_i,q_i)  \}_{i=1}^{3}$ where $p_1=p_2$, $p_1 \neq p_3$ and 
$q_i \neq q_j$ $(i\neq j)$. 
\item $ \bar K^2(C \times D) \ni  \{ (p_i,q_i)  \}_{i=1}^{3}$ where $p_1=p_2$, $p_1 \neq p_3$ and 
$q_2=q_3$, $q_2 \neq q_1$. 
\item $ \bar K^2(C \times D) \ni  \{ (p_i,q_i)  \}_{i=1}^{3}$ where $p_1=p_2=p_3=:p$, and $q_i \neq q_j$ $(i\neq j)$. 
\end{enumerate}

1. Let us consider the first case. 
In this case, the fiber $\{(p_i,q_i)\}_{\mu^+}$ is isomorphic to the fiber 
$(\mu^{\prime})^{-1}(\{ (p_i,q_i) \})$ (for short $\{(p_i,q_i)\}_{\mu '}$). So assume that $(Z,L) \in \{(p_i,q_i)\}_{\mu '}$. 
The linear space $L$ is spanned by $l_C$ and $L_D$, and 
the cubic surface $L \cap Y$ is smooth. 
As we wrote in Proposition \ref{mainprop}, $Z$ consists of three lines $l$, $\tau(l)$ and $\tau^2(l)$. 
In particular $Z$ is uniquely determined. 
Hence the fiber is one point.

2 and 3. Notice that the case 3 is the limit of the case $2$. 
Since $\{(p_i,q_i)\}_{\mu '} \cap \pi ^{-1}(Q) = \emptyset  $, 
the fiber $\{(p_i,q_i)\}_{\mu '}$ is isomorphic to the fiber $\{(p_i,q_i)\}_{\mu ^+}$. 
Let $  (Z,L) \in  \{(p_i,q_i)\}_{\mu'}$.  
The linear space $L$ is spanned by $l_C$ and $l_D$. 
Let us consider the cubic surface $L \cap Y$. 
There is a line $l \subset L \cap Y$ which intersect three lines $ \{ \< p_i, q_i \> \}_{i=1}^{3}$. 
In both case $2$ and $3$, 
the line $l$ passes through $p_1$ and $q_3$. 
Notice that the line $l$ is in the fixed locus of the $\bb Z_3$ action on $F(Y)$. 
Since there uniquely exists $Z$ such that $\mca L_Z$ spans $L$, 
the fiber is one point.  

4. Let us consider the fourth case. 
Assume that $(Z,L) \in \{(p_i,q_i)\}_{\mu '} $.  
By the similar argument in the proof of Lemma \ref{factor}, 
$\{(p_i,q_i)\}_{\mu '}$ is parametrized by the line $l_D$. 
Since $\{(p_i,q_i)\}_{\mu '}$ is contracted to a point by $\pi ^+$, 
the fiber $\{(p_i,q_i)\}_{\mu ^+}$ is one point. 
\end{proof}

We will use the following result to complete Step \ref{3}. 
\begin{prop}[\cite{F-N}]
Let $Y$ be a locally $\bb Q$ factorial symplectic variety, and suppose that 
there is a projective crepant resolution of $Y$: $\pi : X \to Y$. 
If the exceptional divisor of $\pi$ is irreducible, then $\pi$ is a unique crepant resolution, that is, 
if there is another crepant resolution $\pi' : X' \to Y$ then the natural birational map $\pi' \circ \pi^{-1}$ is an isomorphism. 
\end{prop}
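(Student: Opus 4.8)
The plan is to establish uniqueness of the crepant resolution by reducing the statement to a purely local question about the exceptional divisor and then invoking the general deformation-theoretic principle that crepant resolutions of symplectic varieties are connected by Mukai flops. First I would verify the hypotheses: $Y$ is assumed locally $\bb Q$-factorial and symplectic, and we are given a projective crepant resolution $\pi: X \to Y$. The essential point is that the exceptional divisor of $\pi$ is irreducible, so it cannot be decomposed into separate components that could be flopped independently. I would begin by recording that any two crepant resolutions $\pi: X \to Y$ and $\pi': X' \to Y$ are related by a sequence of Mukai flops (this is the standard structure theory for symplectic resolutions), so the composite $\pi' \circ \pi^{-1}: X \dashrightarrow X'$ is a birational map that is an isomorphism in codimension one.

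The next step is to analyze the indeterminacy locus of $\phi := \pi' \circ \pi^{-1}$. Because $X$ and $X'$ are both symplectic and the map is an isomorphism in codimension one, any nontrivial flopping contraction must contract a divisor in $X$ to a lower-dimensional locus and extract a different divisor in $X'$; equivalently, a Mukai flop is associated to each irreducible component of the relevant exceptional divisor that gets flopped. The heart of the argument is to show that the irreducibility of $\mathrm{Exc}(\pi)$ forces $\phi$ to have no indeterminacy at all. Concretely, I would argue that if $\phi$ were not an isomorphism, there would exist a divisor $D \subset X$ contracted by $\pi$ (hence $D \subseteq \mathrm{Exc}(\pi)$) that is flopped to produce $X'$; but a Mukai flop replaces one ruling of the contracted locus by another, and the divisor being flopped must be a proper irreducible piece of the exceptional set while leaving the rest intact. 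Since $\mathrm{Exc}(\pi)$ is irreducible, there is no room for a nontrivial such decomposition: flopping the whole irreducible exceptional divisor would have to send it isomorphically to $\mathrm{Exc}(\pi')$, and combined with the codimension-one isomorphism this forces $\phi$ to extend to a global isomorphism.

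I expect the main obstacle to be the precise justification that the flop decomposition respects the irreducible components of the exceptional divisor, so that irreducibility genuinely precludes a nontrivial flop. One must rule out the possibility that a single irreducible divisor supports a flop that is invisible to the naive count — that is, one must confirm that the contracted locus of any elementary symplectic flopping contraction is always a proper subset whenever two distinct resolutions exist, which is where the local analytic structure of the $A_2$-type singularities and the Fujiki--Nakano contraction theorem enter. I would make this rigorous by passing to the local model near a generic point of $\mathrm{Exc}(\pi)$, where the symplectic resolution looks like the relative minimal resolution of a family of surface singularities; there the flops correspond to reflections in the associated root system, and a single irreducible exceptional divisor corresponds to a root that, once resolved in one way, leaves no alternative crepant choice. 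Assembling these local pictures and using properness of $\pi$ and $\pi'$ over $Y$ then yields that $\phi$ is a morphism, and symmetrically that $\phi^{-1}$ is a morphism, so $\phi$ is the desired isomorphism.
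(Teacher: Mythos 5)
The paper does not actually prove this proposition; it is quoted verbatim from Fu--Namikawa \cite{F-N} and used as a black box, so there is no internal proof to compare against. Judged on its own terms, your argument has a genuine gap at its central step. You claim that a nontrivial flop relating $X$ to $X'$ ``must contract a divisor in $X$ to a lower-dimensional locus and extract a different divisor in $X'$,'' and you deduce that an irreducible exceptional divisor leaves ``no room'' for such a flop. But flops --- and Mukai flops in particular --- are \emph{small} modifications: their indeterminacy loci have codimension at least $2$, and no divisor is contracted or extracted. This very paper illustrates the point: $\tilde X_{\bb Z_3}$ and $K^2(C \times D)$ are related by Mukai flops on copies of $\bb P^2$ inside $4$-folds, and those $\bb P^2$'s are codimension-$2$ centers sitting inside the varieties, not divisorial components. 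So irreducibility of $\mathrm{Exc}(\pi)$ does not, by itself, forbid a flop whose center is a codimension-$2$ subvariety contained in that irreducible divisor. Your fallback --- passing to the local model at a \emph{generic} point of $\mathrm{Exc}(\pi)$, where the transverse slice is an ADE surface singularity with its unique minimal resolution --- only shows the two resolutions agree in codimension one, which is already automatic for two crepant resolutions; the possible discrepancy lives precisely over the deeper strata that this local analysis does not see.

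The hypotheses of local $\bb Q$-factoriality and irreducibility enter in a different way. For a crepant resolution of a $\bb Q$-factorial $Y$, the relative Picard number $\rho(X/Y)$ equals the number of $\pi$-exceptional prime divisors, hence equals $1$ here. Consequently $N^1(X/Y)$ is one-dimensional, the only extremal contraction over $Y$ is the divisorial contraction $\pi$ itself, and there is no small contraction available to flop. Concretely: $X \dashrightarrow X'$ is an isomorphism in codimension one (both resolutions are crepant over the same $\bb Q$-factorial base), so one may transport a $\pi'$-ample divisor $H'$ to a divisor $H$ on $X$ that is movable over $Y$; since $N^1(X/Y) \cong \bb R$ and the relative movable cone contains the relative ample cone, $H$ is $\pi$-ample, whence $X \cong \mathrm{Proj}\bigoplus_m \pi_*\mca O_X(mH) \cong \mathrm{Proj}\bigoplus_m \pi'_*\mca O_{X'}(mH') \cong X'$. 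This also removes your reliance on the statement that any two symplectic resolutions are connected by Mukai flops, which is a substantially harder theorem (known in dimension $4$ by Wierzba--Wi\'sniewski but not needed, and not available in the generality of the proposition).
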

Since the exceptional divisor of $\mu :K^2(C \times D) \to \bar K^2(C \times D)$ is irreducible, 
it is enough to show that $W ^+$ is projective. 
We will observe the fiber of singular points of $\bar K^2(C \times D)$ by $\mu^+$ and 
prove that $W ^+$ is projective
\begin{prop}
Let $E$ be the exceptional divisor of $\mu ^+ : W ^+ \to \bar K^2(C \times D)$. 
Then $-E$ is $\mu ^+$-ample. In particular $W^+$ is a projective manifold. 
\end{prop}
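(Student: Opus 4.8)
The plan is to deduce the projectivity of $W^+$ from the relative ampleness of $-E$. Since $\bar K^2(C\times D)$ is projective---it is the closed fibre $\Sigma^{-1}(0)$ inside the projective variety $\mathrm{Sym}^3(C\times D)$---once $-E$ is shown to be $\mu^+$-ample the morphism $\mu^+$ becomes projective, and then $W^+$, being projective over the projective base $\bar K^2(C\times D)$ and smooth by the Fujiki--Nakano construction, is a projective manifold. The entire problem therefore reduces to proving that $-E$ is $\mu^+$-ample.

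To verify this I would invoke the relative Nakai--Moishezon criterion. By Lemma \ref{smooth} the map $\mu^+$ is an isomorphism over the smooth locus, so every subvariety contracted by $\mu^+$ lies over the singular locus of $\bar K^2(C\times D)$, that is, over the image of the big diagonal where at least two of the points $(p_i,q_i)$ coincide. This singular locus is a surface whose deepest stratum $\Xi$ is the finite set of points at which all three points collide at a common $3$-torsion point of $C\times D$ (the points with $3p=0$ and $3q=0$). Hence the fibres of $\mu^+$ have dimension at most $1$ away from $\Xi$ and dimension $2$ over $\Xi$, and it suffices to establish $(-E)\cdot\Gamma>0$ for every contracted curve $\Gamma$ together with $(-E)^2\cdot S>0$ for every $2$-dimensional fibre $S$.

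Over a general point of the singular locus the two colliding points produce a transverse $A_1$ singularity, so locally $W^+$ is the product of the minimal resolution of $A_1$ with a disk; the unique contracted curve is the exceptional $(-2)$-curve $\Gamma\cong\bb P^1$, on which $\mca O_{W^+}(E)$ has degree equal to its self-intersection $-2$ in the transverse slice, giving $(-E)\cdot\Gamma=2>0$. The decisive input is the behaviour over $\Xi$: there I would read off the $2$-dimensional fibre $S$ and the restriction $\mca O_{W^+}(E)|_S$ from the explicit local ideals of $\mathrm{Exc}(\nu)$ computed in Lemma \ref{mainlem} (the two $3$-torsion cases), transported through the blow-up along $Q$ and the Fujiki--Nakano blow-down $\pi^+$ of Proposition \ref{blowup}; together with the description of $Q$ in Lemma \ref{Q}, this determines $S$ and allows the computation of $(-E)^2\cdot S$ as well as of $(-E)\cdot\Gamma$ for curves $\Gamma\subset S$.

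The main obstacle is precisely this analysis over $\Xi$: the transverse-slice picture used for the generic stratum breaks down, and one must instead track how the Mukai flop reshapes the exceptional divisor in order to control both the self-intersection $(-E)^2\cdot S$ and the intersections with the curves inside the $2$-dimensional fibres. Once these positivity statements are secured, the relative Nakai--Moishezon criterion shows that $-E$ is $\mu^+$-ample, and the reduction of the first paragraph then yields that $W^+$ is a projective manifold.
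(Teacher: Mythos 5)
Your reduction to the relative ampleness of $-E$, and your treatment of the generic stratum of the singular locus (a transverse $A_1$ point, a $\bb P^1$-fiber $\xi$ with trivial normal bundle inside $E$, hence $(\xi,E)=-2$ and $(-E)\cdot\xi=2>0$), agree with the paper. But the proposal has a genuine gap exactly where you locate the ``main obstacle'': the $81$ two-dimensional fibers over the points $\{(p,q),(p,q),(p,q)\}$ with $3p=3q=0$ are never actually analyzed. You say one ``would read off'' the surface fiber $S$ and $\mca O_{W^+}(E)|_S$ from the local ideals of Lemma \ref{mainlem} transported through the blow-up and the Fujiki--Nakano contraction, and then check $(-E)^2\cdot S>0$ and $(-E)\cdot\Gamma>0$ for curves $\Gamma\subset S$; none of this is carried out, and that computation is precisely the substance of the proposition. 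Without it the relative Nakai--Moishezon criterion you invoke gives nothing. (You also assert without argument that the fibers are at most one-dimensional away from these $81$ points; the paper needs its case analysis of the fibers, including the intermediate case $p_1=p_2=p_3$, $q_i$ not all equal, where the fiber is the curve $\mca I$.)

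The paper closes this gap by a mechanism your plan misses and which makes the surface computation unnecessary. It identifies the deep fiber explicitly: there the cubic surface $L\cap Y$ degenerates to three planes through the line $\< p,q \>$, the corresponding fiber in $W$ is a Hirzebruch surface $F_n$ ruled by the direction parameter $D_t(s)$, and after the flop $\pi^+$ the sections $D_p(s)$ and $D_q(s)$ are contracted, so the fiber of $\mu^+$ is the contraction $\tilde F_n$ of $F_n$ along a $(-n)$-curve, a surface of Picard number $1$. Hence \emph{every} curve contracted by $\mu^+$ is algebraically equivalent to a positive multiple of the general fiber $\xi$, and the single intersection number $(\xi,E)=-2$ already forces $-E$ to be $\mu^+$-ample; no separate positivity check on the surface fibers is performed. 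To make your route work you would have to either reproduce this Picard-number-one argument or genuinely carry out the computation of $(-E)^2\cdot S$ and of $E$ on the curves inside $S$, neither of which the proposal does.
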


\begin{proof}
We first consider the fiber of singular points by $\mu ^+$. 
By Lemma \ref{smooth}, the exceptional divisor of $\pi ^+$ is contained in the inverse image of 
the singular locus of $\bar K^2(C \times D)$. 
We will use the same notation in the proof of Lemma \ref{smooth}. 
By symmetry we have to consider the following three cases. 
\begin{enumerate}
\item $\{ (p_i,q_i) \}_{i=1}^{3} \in \bar K^2(C \times D)$. 
$p_1=p_2=:p$, $p_1 \neq p_3 =-2p$ and $q_1=q_2=:q$, $q_1 \neq q_3 =-2q$. 
\item $\{ (p_i,q_i) \}_{i=1}^{3} \in \bar K^2(C \times D)$.  
$p_1=p_2=p_3 =:p$ and $q_1=q_2=:q$, $q_1 \neq q_3 =-2q$. 
\item $\{ (p_i,q_i) \}_{i=1}^{3} \in \bar K^2(C \times D)$. 
$p_1=p_2=p_3 =:p$ and $q_1=q_2=q_3 =:q$ 
\end{enumerate}

1. 
Let us consider the first case. 
We claim that the fiber $\{(p_i,q_i)\}_{\mu^+}$ is isomorphic to $\bb P^1$. 
Since $\{(p_i,q_i)\}_{\mu'} \cap  \pi^{-1}(Q) = \emptyset$, 
we have $\{(p_i,q_i)\}_{\mu^+} \cong \{(p_i,q_i)\}_{\mu'}$. 
So, let $(Z,L) \in \{(p_i,q_i)\}_{\mu'}$. 
Since the linear space $L$ is spanned by the line $l_C$ and $l_D$, $L$ is unique. 
There are infinitely many lines in the cubic surface $L \cap Y$ which intersect with two lines 
$ \< p, q \>$ and $\<-2p, -2q \> $. 
Here we can draw the following picture of the configuration of lines in $L \cap Y$. 
\begin{center}
\includegraphics[clip,height=40mm]{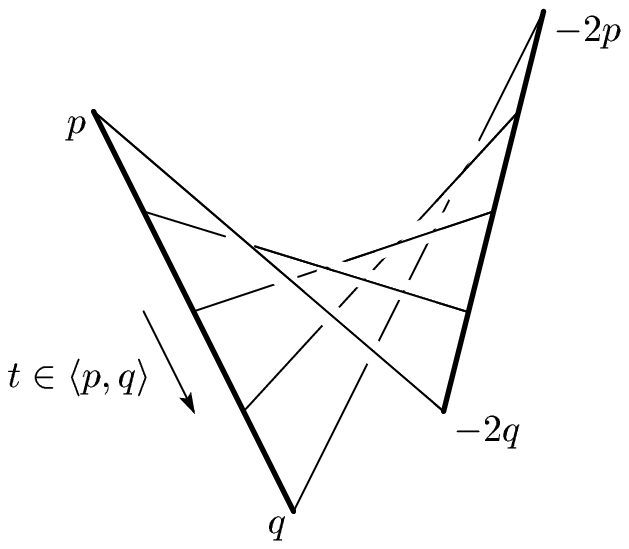}
\end{center}
The set of lines can be parametrized by the line $\<p,q\>$, that is, 
the line $l$ in $ L \cap Y$ is uniquely determined by $t \in \< p,q\>$. 
The cubic surface has singular points along the line $\< p,q\>$.

Notice that $\bb Z_3$ acts on the line $\< p,q \>$. 
The fiber $\{(p_i,q_i)\}_{\mu'}$ is isomorphic to $\< p,q\>/\bb Z_3$. 
Indeed let $l_t\ \subset L\cap Y$ be the line passing through $t \in \< p,q\>$. 
When $t \in \< p,q\> -\{p,q \}$, 
$l_t$ is not in the fixed locus of the $\bb Z_3$ action on $F(Y)$. 
Hence there exists uniquely $Z $ in $\tilde X_{\bb Z_3}$ such that the support of $\mca L_Z$ is $l_t \cup \tau (l_t) \cup \tau^2(l_t)$. 
When $t = p$, $l_p$ passes through $p$ and $-2q$. 
Notice that $l_p$ is in the fixed locus of the $\bb Z_3$ action on $F(Y)$. 
By Lemma \ref{mainlem}, there uniquely exists $Z \in \tilde X_{\bb Z_3}$ such that the support of $\mca L_Z$ is 
$l_p $ and $\mca L_Z$ spans $L$. 
Similarly, if $t = q$, then the line $l_q$ passes through $-2p$ and $q$. 
By $L$, $Z$ can be determined uniquely. 
So $\{(p_i,q_i)\}_{\mu^+}$ is isomorphic to $\< p,q\>/\bb Z_3$.

2. 
Let us consider the second case. 
Recall the notation $\mca I$ defined in third case of the proof for Lemma \ref{mainlem}. 
We will prove that the fiber $\{(p_i,q_i)\}_{\mu^+}$ is parametrized by $\mca I$. 
Let $ (Z,L)  \in \{(p_i,q_i)\}_{\mu'}$. 
Then $L$ is spanned by $l_C$ and $l_D$. 
Furthermore, the cubic surface $L \cap Y$ is given by $\{ (x:y:z:w) \in L |y^3 +z^2w =0\}$ for suitable homogeneous coordinates of $L$. 
The point $(1:0:0:0) \in L$ correspond to $p$. 
Notice that all lines contained in the cubic surface $L \cap Y$ pass through $p$. 

We claim that the fiber $\{(p_i,q_i)\}_{\mu'}$ is isomorphic to the tree $\mca I \cup l_D$ of two projective line $\mca I$ and $l_D$. 
Let $P_t$ be the plane spanned by $l_C$ and $t \in l_D$. 
If $t \in l_D-\{q,-2q \}$, then $P_t \cap Y$ is not in the fixed locus. 
So we can uniquely determine $Z$ such that the support of $\mca L_Z$ is $P_t \cap Y$. 
When $t = -2q$, $P_t \cap Y$ is the line $ \< p,-2q\> $ passing through $p$ and $-2q$. 
There uniquely exists $Z$ such that the support of $\mca L_Z$ is the line $\< p,-2q\>$ and $\mca L_Z$ spans $L$. 
When $t = q$, $P_t \cap Y$ is the line $\< p,q\>$. 
Here recall that $I_{(a:b) } \in \mca I$ is the ideal defined in the third case in the proof of Lemma \ref{mainlem}. 
Let $Z_{(a:b) } \in  \tilde X_{\bb Z_3}$ be the point defined by $I_{(a:b)}$. 
For each $ I_{(a:b)} \in \mca I$, $(Z_{(a:b)},L)$ is in the fiber $\{(p_i,q_i)\}_{\mu'}$. 
So in the second case, we can not uniquely determine $Z$. 
Therefore, $\{(p_i,q_i)\}_{\mu'}$ is isomorphic to $\mca I \cup l_D$. 

Since the line $l_D$ is contracted to a point by $\pi ^+ :W \to W^+$, 
the fiber $\{(p_i,q_i)\}_{\mu^+}$ is isomorphic to $\mca I$. 
We draw the picture of the ``blow down''. 
\begin{center}
\includegraphics[clip]{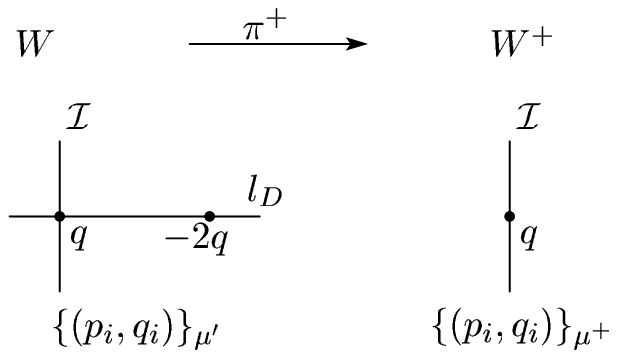}
\end{center}

3. 
Let us consider the third case. 
When $(Z,L) \in  \{(p_i,q_i)\}_{\mu'}$, the linear space $L$ is spanned by $l_C$ and $l_D$. 
Moreover the cubic surface $L \cap Y$ is three planes $P_1$, $P_2$ and $P_3$. 
Each plane is sent to another plane by the $\bb Z_3$ action. 
Three planes share the line $\< p,q\>$. So we choose a plane $P_1$ and fix it. 
\begin{center}
\includegraphics[clip,height=4cm]{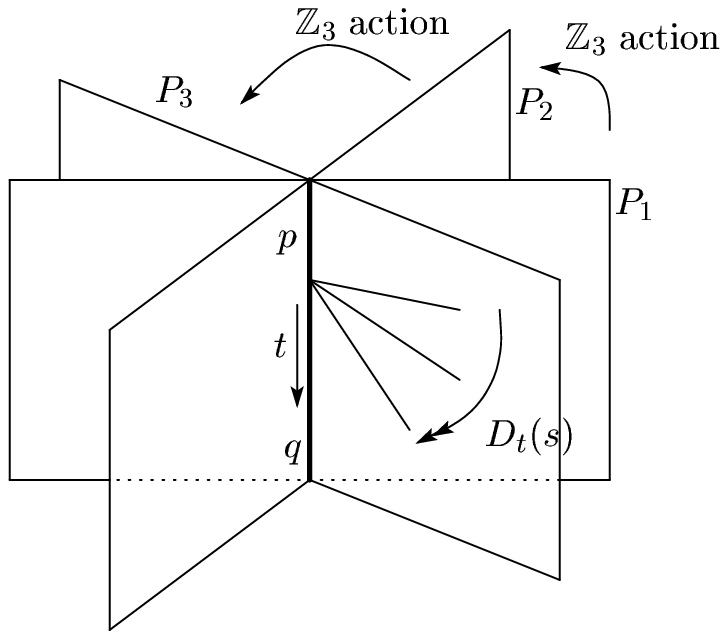}
\end{center}
 
For each $t \in \<p,q\>$, the set of lines passing through $t$ in $P_1$ is parametrized by $\bb P^1$. 
This parameter means direction of the line passing through $t$. 
So we call this parameter $D_t(s)$ where $s\in \bb P^1$. 
For each $t$, 
there is a special direction $D_t(s_0)$ which represent the line $\< p,q\>$. 
Notice that $\< p,q\>$ is in the fixed locus of the $\bb Z_3$ action on $F(Y)$. 
Here recall that $\mca I$ and $\mca J$ defined in the second case of the poof for Lemma \ref{mainlem} 
and let $I_{(a:b)} \in \mca I$ and $J_{(c:d)} \in \mca J$. 
If we consider the blow up $\tilde X_{\bb Z_3}\to X_{\bb Z_3}$, then the tree $\mca I \cup \mca J$ appears as the exceptional curves. 
Now let us consider the limit $\lim_{s\to s_0} D_t(s)$ for $t \in \< p,q\>-\{ p,q \}$. 
This limit is independent of $t$ 
and $\lim_{s\to s_0} D_t(s)$ coincides with the intersection $I \cap \mca J$. 
If we flop $\tilde X_{\bb Z_3} \dashrightarrow W^+$, then $D_p(s)$ and $D_q(s)$ are contracted to points. 
So the fiber $\{(p_i,q_i)\}_{\mu^+}$ is the contraction of Hirtzebruch surface $F_n \to  \tilde F_n$  by $(-n)$-curve. 
In particular, the Picard number of the surface $\{(p_i,q_i)\}_{\mu^+}$ is $1$. 
We can explain these by the following picture. 
\begin{center}
\includegraphics[clip,height=45mm]{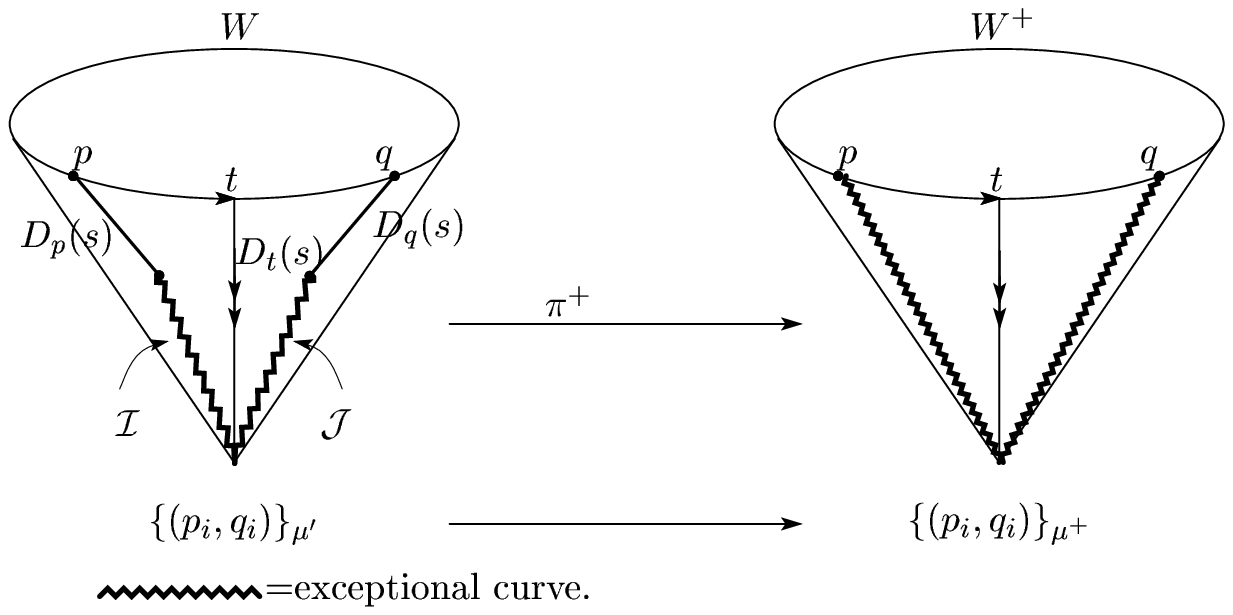}
\end{center}

Let $E$ be the exceptional divisor of $\mu^+$. 
$E$ has a fibration over the singular points of $\bar K^2(C \times D)$. 
A general fiber of the fibration is $\bb P^1$ and there are 81 singular fibers $\tilde F_n$. 
Since the Picard number of $\tilde F_n$ is $1$, 
all curves contracted by $\mu ^+$ are algebraically equivalent to a general fiber 
$$\xi= (\mu^{+})^{-1}( \{ (p,q),(p,q),(-2p,-2q) \}).$$ 
Since the normal bundle of $\xi$ is trivial, the intersection number $(\xi,E)$ is $-2$. 
Therefore $-E$ is $\mu^+$ ample. 
\end{proof}

\subsection{Birational symplectic models of $\tilde X_{\bb Z_3}$}
Finally let us consider other birational symplectic models of $\tilde X_{\bb Z_3}$ which are different from $K^2 (C \times D)$. 
We define divisors needed later. 
Let $E_1$ be the irreducible component of $\mathrm{Exc}(\nu):\tilde X_{\bb Z_3} \to X_{\bb Z_3}$ such that $E_1$ meets $Q_{(\mathrm{I})}$, and $E_1$ does not meet $Q_{(\mathrm{II})}$. 
Similarly, let $E_2$ be the irreducible component of $\mathrm{Exc}(\nu)$ such that $E_2$ meets $Q_{(\mathrm{II})}$, and $E_2$ does not meet $Q_{(\mathrm{I})}$. 
We remark that $\mathrm{Exc}(\nu)$ is $E_1 + E_2$. 
Here let $K^2(C \times D) \dashrightarrow  X_{\mathrm{I}}$ be the Mukai flop on $\bb P_{(\mathrm{I})}$, and 
$K^2(C \times D) \dashrightarrow  X_{\mathrm{II}}$ the Mukai flop on $\bb P_{(\mathrm{II})}$. 

\begin{thm}\label{mainthm2}
Assume that $C$ and $D$ are ``not'' isogenus.
Then there are exactly 4 birational models of $\tilde X_{\bb Z_3}$, that is, 
there are 4 projective irreducible symplectic manifolds $K^2(C \times D)$, $\tilde X_{\bb Z_3}$, $X_{\mathrm{I}}$ and $X_{\mathrm{II}}$ which are birational to $K^2(C \times D)$.  
The movable cone of $\tilde X_{\bb Z_3}$ is decomposed into the ample cones of these 4 models in the following way. 
\begin{center}
\includegraphics[clip,height=37mm]{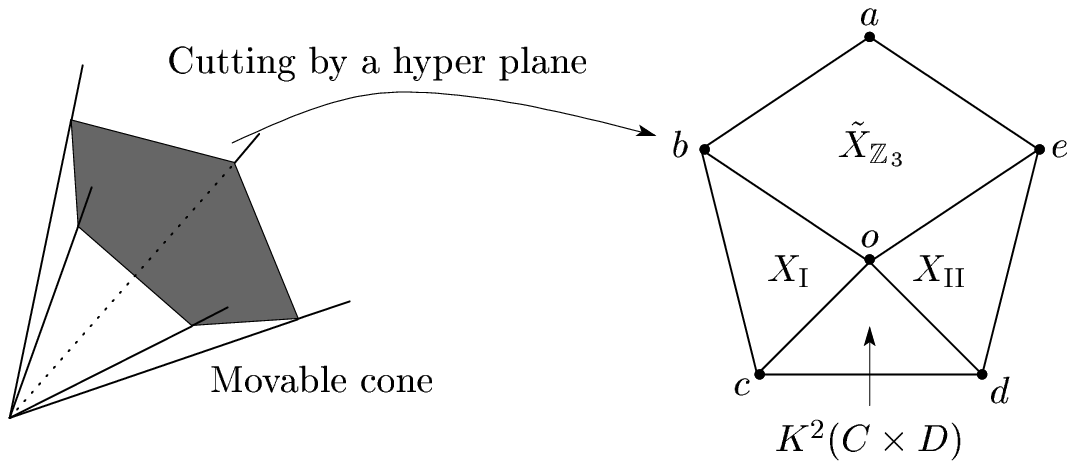}
\end{center}
Each codimension 1 face corresponds to the following contraction morphism. 
\begin{center}
\begin{tabular}{|c|l|cc|}
\hline
Face & Morphism \\ 
\hline
$\overline{oc}$ & Contraction of $\bb P_{(\mathrm{I})}$  \\
\hline
$\overline{od}$ & Contraction of $\bb P_{(\mathrm{II})}$ \\
\hline
$\overline{cd}$ & $K ^2(C \times D) \to \bar K^2(C \times D)$ \\
\hline
$\overline{ab}$ & Contraction of $E_1$ \\
\hline
$\overline{ae}$ & Contraction of $E_2$ \\
\hline
$\overline{ob}$ & Contraction of $Q_{(\mathrm{II})}$ \\
\hline
$\overline{oe}$ & Contraction of $Q_{(\mathrm{I})}$ \\
\hline
$\overline{bc}$ & Contraction of $\tilde E_1$ \\
\hline
$\overline{de}$ & Contraction of $\tilde E_2$ \\
\hline
\end{tabular}
\begin{tabular}{|c|l|}
\hline
Vertex & Morphism \\
\hline
$c$ & $K ^2(C \times D) \to K ^2(C)$ \\
\hline
$d$ & $K ^2(C \times D ) \to K ^2(D)$  \\
\hline
$o$ & Contraction of 18 copies of $\bb P^2$ \\
\hline
$a$ & $\tilde X_{\bb Z_3} \to X_{\bb Z_3}$ \\
\hline
$b$ & contraction of $E_1$ and  $Q_{(\mathrm{II})}$. \\ 
\hline
$e$ & contraction of $E_2$ and $Q_{(\mathrm{I})}$. \\
\hline
\end{tabular}
\end{center}
Here $\tilde E_1$ is the proper transform of $E_1$ in $X_{\mathrm{I}}$, 
$\tilde E_2$ is the proper transform of $E_2$ in $X_{\mathrm{II}}$. 
\end{thm}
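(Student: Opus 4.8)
The plan is to translate the statement into a computation inside the N\'eron--Severi lattice of $\tilde X_{\bb Z_3}$, exploiting the general principle that the movable cone of a projective irreducible symplectic manifold is subdivided into the ample cones of its birational models, two adjacent chambers being separated by a flopping wall and each boundary face inside the positive cone corresponding to a divisorial contraction. Since $\tilde X_{\bb Z_3}$ is birational to $K^2(C\times D)$ by Proposition \ref{mainprop}, its second cohomology carries the Beauville--Bogomolov form whose underlying lattice is $U^{\oplus 3}\oplus \bb Z\delta$, with $q(\delta)<0$ and $\delta$ proportional to the Hilbert--Chow exceptional divisor. The hypothesis that $C$ and $D$ are \emph{not} isogenous forces $NS(C\times D)$ to be the hyperbolic plane $U$ spanned by the fibre classes $\bar F_C=\{\ast\}\times D$ and $\bar F_D=C\times\{\ast\}$, so that $NS(\tilde X_{\bb Z_3})\cong U\oplus \bb Z\delta$ has rank $3$ and $q$ has signature $(1,2)$. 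I would fix the basis $\bar F_C,\bar F_D,\delta$, record $q(\bar F_C)=q(\bar F_D)=0$, $q(\bar F_C,\bar F_D)=1$, $q(\bar F_C,\delta)=q(\bar F_D,\delta)=0$, and work in $N:=NS(\tilde X_{\bb Z_3})_{\bb R}\cong \bb R^3$, whose positive cone $\mca C$ is one nappe of $\{q>0\}$.

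Next I would locate every wall inside $\mca C$, reading the candidate classes off the geometry already established. The two nef isotropic classes $\bar F_C,\bar F_D$ lie on $\partial\mca C$ and give the Lagrangian fibrations $K^2(C\times D)\to K^2(C)\cong\bb P^2$ and $K^2(C\times D)\to K^2(D)\cong\bb P^2$; these are the extremal rays $c$ and $d$. The divisorial walls are cut out by the prime exceptional divisors: the Hilbert--Chow divisor of $K^2(C\times D)$ (face $\overline{cd}$), the two components $E_1,E_2$ of $\mathrm{Exc}(\nu)$ from Lemma \ref{crepant} (faces $\overline{ab}$ and $\overline{ae}$), and their proper transforms $\tilde E_1,\tilde E_2$ (faces $\overline{bc}$ and $\overline{de}$). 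The interior flopping walls are the hyperplanes orthogonal to the line classes of the flopped planes: by Theorem \ref{mainthm} the Mukai flops occur on $\bb P_{(\mathrm I)},\bb P_{(\mathrm{II})}$ and on $Q_{(\mathrm I)},Q_{(\mathrm{II})}$, and these produce the two walls through the interior point $o$ that separate the four chambers (faces $\overline{oc},\overline{od}$ and $\overline{ob},\overline{oe}$). Computing $q$ on each of these classes and their mutual pairings reduces the description of the subdivision to elementary linear algebra in $\bb R^3$, which I expect to yield exactly the pentagon with interior vertex $o$ drawn in the statement, cut into the four chambers $(o,c,d)$, $(o,b,c)$, $(o,d,e)$ and $(o,a,b,e)$.

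I would then match each chamber with a model and each face with a contraction. The chamber $(o,c,d)$ is the ample cone of $K^2(C\times D)$, bounded by its two Lagrangian fibrations and the Hilbert--Chow contraction and separated from its neighbours by the flops of $\bb P_{(\mathrm I)}$ and $\bb P_{(\mathrm{II})}$; crossing these walls yields $X_{\mathrm I}$ (chamber $(o,b,c)$) and $X_{\mathrm{II}}$ (chamber $(o,d,e)$). Performing the complementary flop of $Q_{(\mathrm{II})}$ on $X_{\mathrm I}$, or of $Q_{(\mathrm I)}$ on $X_{\mathrm{II}}$, lands in the common chamber $(o,a,b,e)$, which by Theorem \ref{mainthm} is the ample cone of $\tilde X_{\bb Z_3}$; this closes the ``square'' of two commuting Mukai flops and shows the four models are pairwise distinct. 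The assignment of the boundary faces $\overline{cd},\overline{ab},\overline{ae},\overline{bc},\overline{de}$ to the divisorial contractions, of the interior faces $\overline{oc},\overline{od},\overline{ob},\overline{oe}$ to the Mukai flops, and of the vertices $c,d,o,a,b,e$ to the remaining contractions is then read directly from Lemmas \ref{mainlem} and \ref{Q} and Theorem \ref{mainthm}, producing the two tables.

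The main obstacle is \emph{completeness}: showing that these four chambers exhaust $\overline{\mathrm{Mov}}(\tilde X_{\bb Z_3})$, so that there are exactly four models and no more. This is precisely where the non-isogeny hypothesis enters. It forces $\rho(\tilde X_{\bb Z_3})=3$, confining the analysis to the small lattice $U\oplus \bb Z\delta$, and I would argue that the only negative classes producing divisorial or flopping contractions are the ones already listed---equivalently, that $\overline{\mathrm{Mov}}$ is a fundamental domain for the group generated by the reflections in the prime exceptional classes $E_1,E_2$ and the Hilbert--Chow class, and that no further wall meets $\mca C$. If $C$ and $D$ were isogenous, $NS$ would acquire additional negative classes, new walls would appear and the chamber count would grow; excluding this under the stated hypothesis is the heart of the argument.
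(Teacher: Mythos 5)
Your overall picture of the chamber structure is correct and matches the paper's (the four chambers $(o,c,d)$, $(o,b,c)$, $(o,d,e)$, $(o,a,b,e)$ and the assignment of models to them), but your route is genuinely different from the paper's and it leaves the decisive step unproved. The paper never passes through the Beauville--Bogomolov lattice or a wall-and-chamber principle: Lemmas \ref{Kumcone} and \ref{F(Y)cone} (and the lemma for $X_{\mathrm{I}}$, $X_{\mathrm{II}}$) compute the nef cone of each of the four models \emph{directly}, by exhibiting explicit extremal curves ($x$, $y$, $z$, $w$), tabulating their intersection numbers with explicit divisors ($H_1$, $H_2$, $E$, respectively $\phi^*H_{X_0}$, $E_1$, $E_2$), and proving nefness of the supporting divisors via the cone theorem together with Kawamata's result that an extremal locus is covered by rational curves and the observation that loci such as $F_1\cap F_2\cong C^{\vee}\times D^{\vee}$ carry no rational curves. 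Completeness (``exactly 4 models'') then follows because every face of the assembled region is shown to be either a divisorial or fiber-type contraction (hence a boundary face of the movable cone) or a flop onto one of the other three listed models, so no further chamber can be attached.

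The genuine gap in your proposal is precisely the step you flag as ``the heart of the argument'': you assert that the only walls meeting the positive cone of $NS\cong U\oplus\bb Z\delta$ are those orthogonal to the classes you read off from the known geometry, ``equivalently'' that $\overline{\mathrm{Mov}}$ is a fundamental domain for the reflections in the prime exceptional classes, but you never carry out this enumeration. The lattice $U\oplus\bb Z\delta$ contains infinitely many classes of negative square, and deciding which of them are actual wall divisors (divisorial or flopping) for a fourfold of generalized Kummer deformation type is a deep classification result that is not available from anything established in this paper; the general principle that $\overline{\mathrm{Mov}}$ decomposes into ample cones of birational models separated by flopping walls is likewise a substantial theorem you are importing wholesale. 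As written, your argument shows that at least these four chambers occur, but not that no fifth wall crosses the interior of the region $(a,b,c,d,e)$; without either the numerical classification of walls or the paper's face-by-face verification that every boundary face of the union is a divisorial or fiber-type contraction, the claim ``exactly 4'' is not established. Note also that even on your route you would still need essentially all of the paper's explicit intersection computations just to express $E_1$, $E_2$, $\bb P_{(\mathrm{I})}$, $\bb P_{(\mathrm{II})}$, $Q_{(\mathrm{I})}$, $Q_{(\mathrm{II})}$ in the chosen basis, so little is actually saved.
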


We will provide the proof into three parts. We always assume that $C$ and $D$ are not isogenus. 

\begin{lem}\label{Kumcone}
The ample cone of $K^2(C \times D)$ is given by the following picture, 
where $H_1$ (resp.\ $H_2$, $6H_1 + 6H_2 -E$) is the supporting divisor of the vertex $c$ (resp.\ $d$, $o$).  
\begin{center}
\includegraphics[height=3cm,clip]{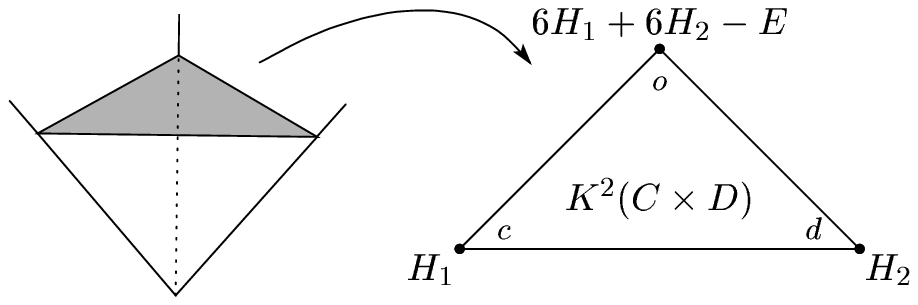}
\end{center}
The divisors $H_1$, $H_2$ and $E$ will be defined in the proof. 
\end{lem}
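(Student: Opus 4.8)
The plan is to turn the computation of the ample cone into lattice theory for the Beauville--Bogomolov--Fujiki form $q$, and then to match each bounding wall with an extremal contraction of $K^2(C\times D)$ that we already control. Write $A=C\times D$. Since $C$ and $D$ are not isogenus, $\Hom(C,D)=0$, so $\mathrm{NS}(A)$ has rank $2$, generated by the fibre classes $\alpha=[\{pt\}\times D]$ and $\beta=[C\times\{pt\}]$ with $\alpha^2=\beta^2=0$ and $\alpha\cdot\beta=1$; thus $\mathrm{NS}(A)\cong U$. By Beauville's description of $H^2$ of a generalized Kummer variety, $\mathrm{NS}(K^2(A))\cong \mathrm{NS}(A)\oplus\bb Z\delta$, where $E:=2\delta$ is the exceptional divisor of the Hilbert--Chow contraction $\mu:K^2(A)\to\bar K^2(A)$, the two summands are $q$-orthogonal, and $q(\delta)=-6$. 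Hence $\mathrm{NS}(K^2(A))\cong U\oplus\langle-6\rangle$ has signature $(1,2)$, so after projectivising, its positive cone $\mca C$ is the interior of a conic in $\bb P(\mathrm{NS}(K^2(A))_{\bb R})$, inside which the ample cone must sit.

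First I would produce the three boundary classes and verify each is nef by exhibiting a morphism. Pushing a length-$3$ subscheme forward along the two projections gives the Lagrangian fibrations $K^2(A)\to K^2(C)\cong\bb P^2$ and $K^2(A)\to K^2(D)\cong\bb P^2$; set $H_1,H_2$ to be the pullbacks of $\mca O_{\bb P^2}(1)$. A short intersection computation (using that the Hilbert--Chow curve $e$ has $e\cdot\alpha=e\cdot\beta=0$ and $e\cdot\delta=-1$) shows $H_1,H_2$ lie in the $\langle\alpha,\beta\rangle$-plane and are isotropic; being nef and primitive, they are exactly the isotropic rays $\bb R_{\ge0}\alpha$ and $\bb R_{\ge0}\beta$, i.e.\ the vertices $c,d$ on the boundary conic, and $\overline{cd}=e^{\perp}\cap\overline{\mca C}=\langle H_1,H_2\rangle$ is the wall of the divisorial contraction $\mu$. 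For the third class, $q(6H_1+6H_2-E)=q(6\alpha+6\beta-2\delta)=72-24=48>0$, so $6H_1+6H_2-E$ is an interior point $o$; I would identify it with the pullback of an ample class under the morphism contracting the $18$ Lagrangian planes $Q_{(\mathrm I)}\cup Q_{(\mathrm{II})}$, i.e.\ show it is trivial on precisely the lines of those planes, so that it is nef and supports $o$.

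Next I would locate the remaining two walls using the flops already constructed. By Theorem \ref{mainthm} the birational maps across the faces $\overline{oc}$ and $\overline{od}$ are the Mukai flops of $\bb P_{(\mathrm I)}$ and $\bb P_{(\mathrm{II})}$, whose contracted curves $\ell_{\mathrm I},\ell_{\mathrm{II}}$ are lines in Lagrangian planes. I would pin these down from the incidences forced by the faces, namely $\ell_{\mathrm I}\cdot\alpha=\ell_{\mathrm I}\cdot(6\alpha+6\beta-E)=0$ together with the symmetric relations for $\ell_{\mathrm{II}}$; solving via the $q$-duality $\mathrm{NS}_{\bb Q}\cong N_1{}_{\bb Q}$ gives the primitive orthogonal classes $\delta-2\alpha$ and $\delta-2\beta$, each of square $-6$. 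Then $\overline{oc}=\ell_{\mathrm I}^{\perp}\cap\overline{\mca C}=\langle H_1,6H_1+6H_2-E\rangle$ and $\overline{od}=\ell_{\mathrm{II}}^{\perp}\cap\overline{\mca C}=\langle H_2,6H_1+6H_2-E\rangle$, so the three walls $\overline{cd},\overline{oc},\overline{od}$ bound exactly the triangle $cdo$ with the asserted supporting divisors.

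The crux is to show there are no further walls inside the triangle, i.e.\ that $e,\ell_{\mathrm I},\ell_{\mathrm{II}}$ generate the whole Mori cone, so that $\mathrm{Nef}(K^2(A))=\{D\in\overline{\mca C}\mid D\cdot e,\;D\cdot\ell_{\mathrm I},\;D\cdot\ell_{\mathrm{II}}\ge0\}$ is precisely this triangle. I would establish this by invoking the classification of extremal rays on $K_2$-type fourfolds: a class in the interior of $\mca C$ can fail to be ample only by being non-positive on a primitive class of the prescribed BBF-square governing Hilbert--Chow fibres or Lagrangian planes, and one checks that in the rank-$3$ lattice $U\oplus\langle-6\rangle$ every such class cuts out a wall disjoint from the interior of $cdo$, the only exceptions being $e,\ell_{\mathrm I},\ell_{\mathrm{II}}$ themselves. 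This finite enumeration---ruling out spurious interior walls---is the main obstacle, and it is precisely here that the non-isogeny hypothesis enters: it forces $\mathrm{NS}(A)$ to have rank $2$, hence $\mathrm{NS}(K^2(A))$ rank $3$, which keeps the set of potential wall classes finite and under control.
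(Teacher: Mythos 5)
Your overall strategy---recast the problem in the Beauville--Bogomolov lattice $U\oplus\langle-6\rangle$ and enumerate walls---is genuinely different from the paper's, which works entirely with explicit test curves and intersection numbers. But as written it has two real gaps. First, you never actually prove that $6H_1+6H_2-E$ is nef: you propose to ``identify it with the pullback of an ample class under the morphism contracting the $18$ Lagrangian planes,'' but the existence of that contraction is not available at this point---it is normally \emph{deduced} from the nefness (and semiampleness) of this class, so the argument is circular. The paper does the work here: it writes $6H_1+6H_2-E\sim 2F_1+6H_2\sim 6H_1+2F_2$, so any curve $\xi$ with $(\xi,6H_1+6H_2-E)<0$ must lie in $F_1\cap F_2\cong C^{\vee}\times D^{\vee}$; by the cone theorem such a $\xi$ could be contracted and the exceptional locus would be covered by rational curves (Kawamata), which $C^{\vee}\times D^{\vee}$ does not contain. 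Some substitute for this step is indispensable.

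Second, your ``no further walls'' step leans on a classification of extremal rays (equivalently, wall divisors) for generalized-Kummer-type fourfolds. That classification is a substantial theorem in its own right, is not cited precisely, and the finite enumeration in $U\oplus\langle-6\rangle$ is not actually carried out. Moreover it is unnecessary: once $H_1$, $H_2$ and $6H_1+6H_2-E$ are known to be nef, and the three effective curve classes $x\subset\bb P_{(\mathrm I)}$, $y\subset\bb P_{(\mathrm{II})}$, $z$ (a $\mu$-contracted fibre) have intersection table $(x,H_1,H_2,E)=(0,1,6)$, $(y,\cdot)=(1,0,6)$, $(z,\cdot)=(0,0,-2)$, one checks directly that the dual cone of $\langle x,y,z\rangle$ in the rank-$3$ N\'eron--Severi space is exactly the simplicial cone spanned by $H_1$, $H_2$, $6H_1+6H_2-E$: indeed $aH_1+bH_2+cE$ is nonnegative on $x,y,z$ iff $c\le 0$, $a\ge -6c$, $b\ge -6c$, i.e.\ iff it is a nonnegative combination of the three divisors. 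Hence the nef cone is squeezed between two equal cones and no classification of walls is needed. This elementary duality closure is the point your proposal is missing; with it, the lattice-theoretic framing becomes optional decoration rather than the engine of the proof.
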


\begin{proof}
Since $C $ and $D$ are not isogenus, the Picard number of $K^2(C \times D)$ is $3$.
Let $E$ be the exceptional divisor of $\mu: K^2(C \times D) \to \bar K^2(C \times D)$. 
We have the following commutative diagram. 
\[ 
\xymatrix{
   & K ^2(C \times D) \ar[d]^{\mu} \ar[dl]_{\pi_1} \ar[dr]^{\pi_2} & \\
K^2(C)   & \ar[l]_{\bar \pi_1} \bar K^2(C \times D) \ar[r]^{\bar \pi_2} &  K^2(D) 
}
\]
Let $H_C$ (resp. $H_D$) be the pull back of the tautological line bundle $\mca O_{\bb P^2}(1)$ by the isomorphism $K^2(C) \to \bb P^2$ (resp. $K^2(D) \to \bb P^2$). 
We define $H_1$ and $H_2$ by:
\begin{eqnarray*}
H_1:= \pi_1^*(H_C),\  H_2:= \pi_2^*(H_D) .
\end{eqnarray*}
Let $x$ be a line in $\bb P_{(\mathrm{I})}$, $y$ a line in $\bb P_{(\mathrm{II})}$ and $z$ a rational curve which can be contracted by $\mu$. Write $\mu (z) =  \{ (p,q),(p,q), (-2p,-2q)  \}$ and 
assume that neither $p$ nor $q$ are 3-torsion points. 

Since $x$ is contracted by $\pi_1$, the intersection number $(x,H_1)$ is zero. 
Since $\pi_2$ is an isomorphism on one connected component of $\bb P_{(\mathrm{I})}$, we have $(x,H_2)=1$.  
In the same way, $(y,H_2)=0$, and $(y,H_1)=1 $. 
Since $z$ is contracted by $\pi _1$ and $\pi _2$, we have $(z,H_1)=(z,H_2)=0$. 
Since the normal bundle of $z$ in $E$ is trivial and $z$ is isomorphic to $\bb P^1$, $(z,E)=-2$. 
The intersection $E \cap \bb P_{(\mathrm{I})}$ is given by 
\begin{eqnarray*}
E \cap \bb P_{(\mathrm{I})} & = & \bigl\{  \{ (p, q),(p, q) ,(p,-2q) \} \in K^2(C \times D) \big| 3p=0 , q \in D \bigr\} .
\end{eqnarray*}
So, $E \cap \bb P_{(\mathrm{I})}$ is isomorphic to 9 copies of the curve 
$$ \bigl\{  \{ q,q,-2q \} \in K^2(D) | q \in D \bigr\}.$$ 
This curve is isomorphic to the dual curve $D^{\vee }$ of $D$. 
Since $ \deg D^{\vee} =6$, $(x,E) =6$. 
Similarly, we have $(y,E)=6$. 
From these arguments, we have the the following table of intersection numbers. 
\begin{center}
\begin{tabular}{|c|ccc|}
\hline
 & $H_1 $ & $H_2$ & $E$   \\
\hline
$x$ & 0 & 1 & 6 \\
$y$ & 1 & 0 & 6 \\
$z$ & 0 &  0 & $-2$ \\
\hline
\end{tabular}
\end{center}

Clearly both $H_1$ and $H_2$ are nef. 
We will prove that $6 H_1 + 6H_1 -E$ is nef.  
Notice that $6H_1$ (resp. $6H_2$) is lineally equivalent to $\pi _1^* C^{\vee}$ (resp. $\pi _2^* D^{\vee}$). 
Since $\bar \pi _1^* C^{\vee}$ contains the singular locus of $\bar K^2(C \times D)$, 
$\pi ^*_1 C ^{\vee}$ has two irreducible components. 
Let $F_1$  be the proper transform of $\bar \pi_1^* C^{\vee}$ and $F_2$ the proper transform of $\bar \pi_2^* D^{\vee}$. 
Assume that $\pi ^*_1 C ^{\vee} \sim a F_1 + b E$ for some numbers $a$ and $b$. 
Since $(z,F_1)=1$ and $(y,F_1)=0$, we have $\pi_1^* C^{\vee}=2 F_1 +E$. 
Similarly, $ \pi_2^* D^{\vee} \sim 2F_2+E$. 
Assume that $6 H_1 +6 H_2 -E$ is not nef. Then there is a curve $\xi$ such that $( \xi, 6 H_1 +6 H_2 -E) < 0$. 
Since $6H_1 +6H_2 -E$ is linearly equivalent to the following divisors, $\xi$ should be contained in $F_1 \cap F_2$. 
\begin{eqnarray*}
6H_1 +6H_2 -E & \sim & \pi_1^* C^{\vee}  + 6H_2 -E = 2F_1 +6H_2  \\
              & \sim & 6H_1 +  \pi_2^* D^{\vee}  -E = 6H_1 +2F_2 . 
\end{eqnarray*}
By using the cone theorem, we can contract $\xi$. 
Theorem 1 in \cite{Kaw} says that the exceptional locus is covered by rational curves. 
Since $F_1 \cap F_2$ is isomorphic to $C^{\vee} \times D^{\vee}$, 
$F_1 \cap F_2$ does not contain the rational curve. 
So, $6 H_1 +6 H_2 -E$ is nef. 

By the table of intersection numbers, the face $\overline{oc}$ (resp. $\overline{od}$ and $\overline{cd}$) defines the contraction of $x$ (resp. $y$ and $z$). 
\end{proof} 

\begin{lem}\label{F(Y)cone}
The ample cone of $\tilde X_{\bb Z_3} $ is given by$:$ 
\begin{center}
\includegraphics[height=3cm,clip]{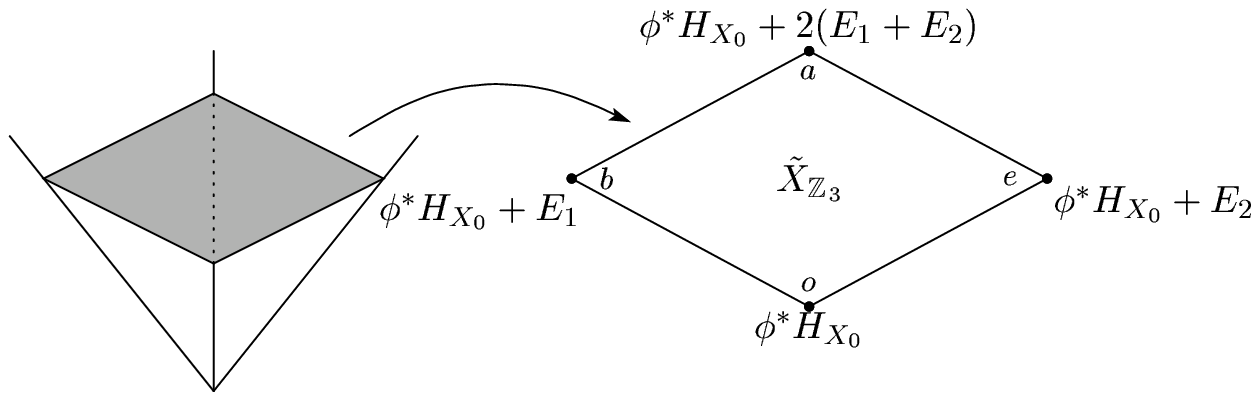}.
\end{center}
Each supporting divisor will be defined in the proof. 
\end{lem}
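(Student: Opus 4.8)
The plan is to compute $\mathrm{Amp}(\tilde X_{\bb Z_3})$ inside the N\'eron--Severi space it shares with $K^2(C\times D)$, in close analogy with Lemma \ref{Kumcone}. The Mukai flop $\psi$ of Theorem \ref{mainthm} is an isomorphism in codimension one, so it identifies $N^1(\tilde X_{\bb Z_3})_{\bb R}$ with $N^1(K^2(C\times D))_{\bb R}$ and the two movable cones; since $C$ and $D$ are not isogenous this space has dimension three. I would work throughout in the basis $H_1,H_2,E$ of Lemma \ref{Kumcone} together with the dual curve classes $x,y,z$ whose pairings are recorded there, and show that $\mathrm{Amp}(\tilde X_{\bb Z_3})$ is the quadrilateral with vertices $a,b,o,e$ and the four faces tabulated for Theorem \ref{mainthm2}.

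First I would exhibit the four extremal curves cutting out these faces. Over a general point of the singular surface $C\times D\subset X_{\bb Z_3}$ the fibre of $\nu$ is the $A_2$-chain $\gamma_1\cup\gamma_2$ of Lemma \ref{mainlem}, with $\gamma_1\subset E_1$ and $\gamma_2\subset E_2$; these span the divisorial faces $\overline{ab}$ and $\overline{ae}$. The flopped lines $x'\subset Q_{(\mathrm{I})}$ and $y'\subset Q_{(\mathrm{II})}$ inside the eighteen planes of Lemma \ref{Q} span the small faces $\overline{oe}$ and $\overline{ob}$. The four faces are then $\gamma_1^{\perp},\gamma_2^{\perp},(x')^{\perp},(y')^{\perp}$, and the vertices $a,b,o,e$ are their pairwise intersections, matching the contractions listed in Theorem \ref{mainthm2}.

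Next I would assemble the intersection table of $\{\gamma_1,\gamma_2,x',y'\}$ against $\{H_1,H_2,E\}$, along with the two exceptional classes $E_1,E_2$, in order to name each supporting divisor. Three inputs feed it. Since a Mukai flop reverses the class of its flopping curve, $[x']=-[x]$ and $[y']=-[y]$, which fixes the pairings of $x',y'$ with $H_1,H_2,E$ via Lemma \ref{Kumcone}; the values $(x'\cdot E_1)=\deg(Q_{(\mathrm{I})}\cap E_1)$ and $(x'\cdot E_2)=0$ (and symmetrically for $y'$) follow from Lemma \ref{Q} together with the fact that $E_1$ meets $Q_{(\mathrm{I})}$ but not $Q_{(\mathrm{II})}$; and the pairings $(\gamma_i\cdot E_j)$ are the entries of the $A_2$ Cartan matrix of the local resolution. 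The remaining degrees $(\gamma_i\cdot H_1),(\gamma_i\cdot H_2),(\gamma_i\cdot E)$ I would read off from the geometry of the three lines $m_1,m_2,m_3$ sweeping the cubic surfaces in Proposition \ref{mainprop}---exactly the dual-curve degree count that produced the entry $6$ in Lemma \ref{Kumcone}. With the table in place the supporting divisors emerge: vertex $o$ is the class $6H_1+6H_2-E$ already found in Lemma \ref{Kumcone}, since contracting the eighteen planes is common to $K^2(C\times D)$ and $\tilde X_{\bb Z_3}$; vertex $a$ is the pull-back $\nu^{*}P$ of a polarization $P$ on $X_{\bb Z_3}$, orthogonal to $\gamma_1$ and $\gamma_2$; and $b,e$ are the two remaining boundary classes, written explicitly in $H_1,H_2,E$.

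Finally I would check that each boundary class is genuinely nef, and this is where the main difficulty lies, precisely as in Lemma \ref{Kumcone}: non-negativity on the four sampled curves does not suffice. Nefness at $a$ is immediate from $\nu^{*}P$, and at $o$ it is inherited from Lemma \ref{Kumcone}. For the small faces $\overline{ob},\overline{oe}$ and the vertices $b,e$ I would suppose some curve $\xi$ pairs negatively with the relevant supporting divisor, contract it by the cone theorem, and apply Theorem 1 of \cite{Kaw} to cover the exceptional locus by rational curves; the crux is then to prove that every such rational curve is numerically one of $\gamma_1,\gamma_2,x',y'$, so that no new wall intrudes. Pinning down these rational curves---showing that the only curves contracted along the boundary are the $A_2$-fibres of $\nu$ and the lines in the flopped planes---is the heart of the argument and occupies the bulk of the proof.
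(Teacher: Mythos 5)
Your proposal follows essentially the same route as the paper: both identify the four extremal curve classes (the two components of the $A_2$-fibres of $\nu$ in $E_1,E_2$ and the lines in $Q_{(\mathrm{I})},Q_{(\mathrm{II})}$), assemble their intersection table against a rank-three divisor basis, read off the supporting classes of the vertices $a,b,o,e$, and verify nefness of the boundary classes via the cone theorem together with Theorem 1 of \cite{Kaw} and the absence of rational curves outside the fibre directions of the exceptional loci. The only cosmetic difference is that the paper works directly in the basis $\phi^{*}H_{X_0},E_1,E_2$ on $\tilde X_{\bb Z_3}$ (computing, e.g., $(x,\tilde E)=4$ geometrically) rather than transporting $H_1,H_2,E$ through the flop with the sign reversal of the flopping curve.
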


\begin{proof}
Let $K^2(C \times D) \to  X_0$ be the contraction of $\bb P_{(\mathrm{I})} \cup \bb P_{(\mathrm{II})}$. 
By Theorem \ref{mainthm}, 
there exists a projective morphism $\phi: \tilde X_{\bb Z_3} \to X_0$, and 
we have the following diagram:
\[ 
\xymatrix{
                               &\ar[ld]_{\pi^{+}} W \ar[rd]^{\pi}& \\
K ^2(C \times D)\ar[rd] \ar@{-->}[rr]^(0.53){\mathrm{Mukai flop}}_(0.53){\varphi} &     &\ar[ld]^{\phi} \tilde X_{\bb Z_3} \\
                               & X_0  & \\
}
\]
Let $H_{X_0}$ be an ample divisor of $X_0$. 
Then we may assume that 
$\phi ^* H_{X_0}$ is numerically equivalent to the proper transform of $6H_1 +6H_2-E$. 
Let $x$ be a rational curve in $E_1$ which is contracted by $\nu$, 
$y$ a rational curve in $E_2$ which is contracted by $\nu $, $z$ a line in $Q_{(\mathrm{I})}$, 
and $w$ a line in $ Q_{(\mathrm{II})}$. 

Clearly $(x,E_2)=(y,E_1) = 1 $. 
Since the normal bundle of $x$ in $E_1$ (resp.\ $y$ in $E_2$) is trivial, $(x,E_1)=-2$ (resp.\ $(y,E_2)=-2$). 
By the definition of $X_0$, we have $(z,\phi ^* H_{X_0})=(w,\phi ^*H_{X_0}) =0$. 
Since $ Q_{(\mathrm{I})} \cap E_1$ is isomorphic to $D$, we have $ (z, E_1) = 3$. 
Since $Q_{(\mathrm{I})} \cap E_2 = \emptyset$, we have $(z,E_2)=0$.  
Similarly, we have $(w,E_2)=3$ and $(w,E_1)=0$. 
Recall divisors $F_1$, $F_2$ and $E$ of $K^2(C\times D)$ defined in the proof for Lemma \ref{Kumcone}. 
Let $\tilde{F_1}$, $\tilde{F_2}$ and $\tilde E$ be a proper transform of $F_1$, $F_2 $ and $E$ respectively. 
By Lemma \ref{mainlem}, 
we have $ \tilde{F_1} =  E_1$ and $\ \tilde{F_2} = E_2$.
Here, we will prove that $(x,\phi ^* H_{X_0}) = 2$. 
Let $Z  \in \tilde X_{\bb Z_3}$ be in the curve $x$. 
For each $Z$, the support of $\mca L_Z$ is a line $\< p_0,q_0 \>$ passing through $p_0 \in C$ and $q_0 \in D$. 
Then $x \cap \tilde E$ is identified with 4 points $\{ q \in D | -2q =q_0 \}$. 
Hence $(x, \tilde E)=4$. 
Since $6H_1 +6H_2 -E \sim 2F_1 +2F_2 +E$, we have the following equation: 
\begin{eqnarray*}
4= (x,\tilde E )&=& (x,\phi ^* H_{X_0}- 2 \tilde F_1 - 2 \tilde F_2) \\
                &=& (x, \phi ^* H_{X_0}) -2(x ,E_1) -2(x ,E_2).
\end{eqnarray*}
Hence we have $(x,\phi ^*H_{X_0})=2$. 
In the same way, we have $(y,\phi ^*H_{X_0})=2$. 
Therefore we get the following table of intersection numbers. 
\begin{center}
\begin{tabular}{|c|ccc|}
\hline
   &$\phi ^* H_{X_0}$ &  $E_1$ & $E_2$ \\ \hline
$x$ & $2$ & $-2$ & $1$  \\
$y$ & $2$ & $1$   & $-2$ \\
$z$ & $0$ & $3$   &  $0$ \\
$w$ & $0$ &  $0$  & $3$ \\
\hline
\end{tabular}
\end{center}

We will prove that the divisor $\phi ^*H_{X_0} + 2 E_1 +2E_2$ defines the morphism $\nu :\tilde X_{\bb Z_3} \to X_{\bb Z_3}$. 
Suppose that  the vertex $a$ is generated by $\nu ^* H$, where $H $ is an ample divisor of $X_{\bb Z_3}$.  
Since the Picard number of $\tilde X_{\bb Z_3} $ is 3, 
we put $ \nu ^*H = a  \phi ^*H_{X_0} + b E_1 + c E_2 $ for some numbers $(a,b,c)$. 
Since both $(x,\nu ^*H)=(y, \nu ^*H)=0$, 
we have $b= c =2a$.

We will prove that both $\phi ^* H_{X_0}+E_1$ and $\phi ^* H_{X_0}+E_2$ are nef. 
From symmetry it is enough to prove it only for $\phi ^* H_{X_0}+E_1$.  
Since $\phi ^* H_{X_0} $ is nef, 
the curve $\xi$ such that $(\xi,\phi ^* H_{X_0}+E_1) <0$ should be contained in $E_1$. 
We can contract $\xi$ by the cone theorem, 
and Theorem 1 in \cite{Kaw} says that such a curve must be rational. 
Since $E_1$ is $\bb P^1$ bundle over $C \times D$, it is enough to check that $(x,\phi ^* H_{X_0} +E_1) \geq 0$. 
By the above table of intersection number, we have 
\[
(x,\phi ^* H_{X_0} + E_1) = 0. 
\]
Therefore $\phi ^*H_{X_0} + E_1$ is nef. 

By the table of intersection numbers, we have the correspondence between vertices and contraction morphisms in 
Theorem \ref{mainthm2}. 
\end{proof}

To complete the proof of Theorem \ref{mainthm2}, we consider 
the ample cone of $X_{\mathrm{I}}$ and $X_{\mathrm{II}}$. 
By symmetry, it is enough to prove it for $X_{\mathrm{I}}$. 

\begin{lem}
The ample cones of $X_{\mathrm{I}}$ and $X_{\mathrm{II}}$ are given by$:$ 
\begin{center}
\includegraphics[clip,height=36mm]{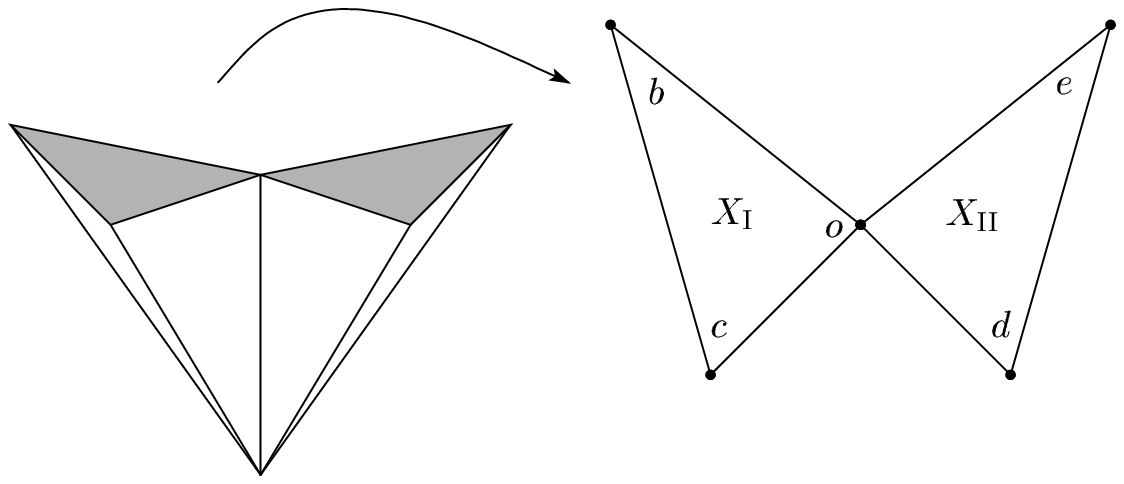}
\end{center}
\end{lem}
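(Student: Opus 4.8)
The plan is to follow the computational strategy of Lemmas \ref{Kumcone} and \ref{F(Y)cone}: fix a convenient basis of $N^1$, assemble a table of intersection numbers against the extremal curve classes, and then read off the supporting divisor of each face, verifying nefness by the cone theorem together with Theorem 1 in \cite{Kaw}. By the symmetry interchanging the roles of $C$ and $D$ (equivalently $\bb P_{(\mathrm{I})}$ and $\bb P_{(\mathrm{II})}$, $Q_{(\mathrm{I})}$ and $Q_{(\mathrm{II})}$, $E_1$ and $E_2$), it suffices to treat $X_{\mathrm{I}}$.

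First I would exploit that $\varphi_{\mathrm{I}}:K^2(C\times D)\dashrightarrow X_{\mathrm{I}}$ is the Mukai flop on $\bb P_{(\mathrm{I})}$. Since a Mukai flop is an isomorphism in codimension one, it induces a canonical identification $N^1(X_{\mathrm{I}})\cong N^1(K^2(C\times D))$ preserving the movable cone; under this I take the proper transforms $\tilde H_1,\tilde H_2,\tilde E$ of the divisors $H_1,H_2,E$ of Lemma \ref{Kumcone} as a basis. The ample cone of $X_{\mathrm{I}}$ is then the chamber adjacent to that of $K^2(C\times D)$ across the common wall $\overline{oc}$, which supports the contraction of $\bb P_{(\mathrm{I})}$, and lies on the opposite side of it.

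Next I would transport the extremal curve classes through the common resolution $W$, namely the blow-up of $K^2(C\times D)$ along $\bb P_{(\mathrm{I})}$ whose exceptional locus is nine copies of the incidence variety $\Gamma=\{(x,H)\in\bb P^2\times(\bb P^2)^{\vee}\mid x\in H\}$. The class $z$ contracted by $\mu$ and the line $y\subset\bb P_{(\mathrm{II})}$ are untouched, while the line $x\subset\bb P_{(\mathrm{I})}$ is replaced by a dual line $x^{+}\subset\bb P_{(\mathrm{I})}^{\vee}$ whose intersection numbers against $\tilde H_1,\tilde H_2,\tilde E$ are modified by the flop. Together with the curve contracted by $Q_{(\mathrm{II})}$ (the wall $\overline{ob}$ shared with $\tilde X_{\bb Z_3}$) and the rulings of the proper transform $\tilde E_1$, this produces an intersection table analogous to those of the previous two lemmas, from which I read off the triangle with vertices $o$, $b$ and $c$: the ray $o$ is supported by $6\tilde H_1+6\tilde H_2-\tilde E$, the ray $c$ by $\tilde H_1$, and the ray $b$ by the class already determined in Lemma \ref{F(Y)cone}. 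The three faces $\overline{oc}$, $\overline{ob}$, $\overline{bc}$ then correspond respectively to the flop of $\bb P_{(\mathrm{I})}$, the contraction of $Q_{(\mathrm{II})}$, and the contraction of $\tilde E_1$.

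The hard part will be the new outer wall $\overline{bc}$, that is, proving that the divisor supporting the contraction of $\tilde E_1$ is nef and that $\overline{bc}$ is a genuine boundary wall of the movable cone. As in Lemma \ref{Kumcone}, any curve $\xi$ with negative intersection would be extremal and contractible by the cone theorem, and by Theorem 1 in \cite{Kaw} its exceptional locus would be covered by rational curves; the delicate point is to control the geometry of $\tilde E_1$ after the flop, whose $\bb P^1$-fibration over $C\times D$ degenerates precisely where $E_1$ meets the flopping locus, and to rule out any unexpected rational curve leaving the cone in this direction. Once $X_{\mathrm{I}}$ is settled, the symmetric statement for $X_{\mathrm{II}}$, giving the triangle with vertices $o$, $d$, $e$ and outer wall $\overline{de}$ contracting $\tilde E_2$, follows verbatim by interchanging $C$ and $D$.
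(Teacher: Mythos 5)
Your proposal reproduces the correct chamber structure (the triangle $obc$, the walls $\overline{oc}$, $\overline{ob}$, $\overline{bc}$, and the supporting classes $6\tilde H_1+6\tilde H_2-\tilde E$, the class at $b$ from Lemma \ref{F(Y)cone}, and $\tilde H_1$ at $c$), and the reduction by symmetry to $X_{\mathrm{I}}$ matches the paper. But the one step that actually carries the content of the lemma is exactly the step you defer as ``the hard part'': establishing the outer wall $\overline{bc}$. Everything else is inherited for free from the adjacent chambers --- the classes at $o$, $b$, $c$ are nef on $X_{\mathrm{I}}$ because they lie on the common walls with $X_0$, $\tilde X_{\bb Z_3}$ and $K^2(C\times D)$ respectively, so the triangle is automatically contained in the nef cone; what must be \emph{proved} is that $\overline{bc}$ is a genuine face, i.e.\ that there is a curve class orthogonal to both the vertex $b$ class and to $\tilde H_1$. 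The paper's entire proof consists of supplying that curve: it constructs the contraction $\psi_{\mathrm{I}}:X_{\mathrm{I}}\to K^2(C)$ from the commutative diagram through $Z_{\mathrm{I}}$, observes that the vertex $c$ is supported by $\psi_{\mathrm{I}}^*\mca O(1)=\tilde H_1$, and then shows that the ruling $\tilde x$ of $\tilde E_1$ (already killed at $b$, since $b$ contracts $E_1$) is also contracted by $\psi_{\mathrm{I}}$. This is done by an explicit computation of the proper transform $\xi\subset K^2(C\times D)$ of $\tilde x$, namely
\[
\xi=\bigl\{\,\{(p,q_1),(p,q_2),(-2p,q)\}\in K^2(C\times D)\ \big|\ q_1+q_2+q=0\,\bigr\},
\]
whose image under $\pi_1$ is the single point $\{p,p,-2p\}\in K^2(C)$; hence $(\tilde x,\tilde H_1)=0$ and $\overline{bc}$ is a wall contracting $\tilde E_1$.

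Your plan never performs this computation: you flag that one must ``control the geometry of $\tilde E_1$ after the flop'' and ``rule out any unexpected rational curve'' via the cone theorem and \cite{Kaw}, but that machinery is not what is needed here (nefness of the bounding classes is already known), and without the identification of $\xi$ you have not shown that the chamber of $X_{\mathrm{I}}$ closes up along $\overline{bc}$ rather than extending further. To complete the argument, replace the appeal to \cite{Kaw} by the explicit description of the ruling of $\tilde E_1$ in the coordinates of Lemma \ref{mainlem} and verify $(\tilde x,\tilde H_1)=0$ as above; the rest of your outline then goes through, and the case of $X_{\mathrm{II}}$ follows by the symmetry you invoke.
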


\begin{proof}
Throughout the proof,we use the same notations used in the proof of Lemma \ref{F(Y)cone}. 
We only prove that the ample cone of $X_{(\mathrm{I})}$ is a ``tetrahedron''. 
In $\tilde X_{\bb Z_3}$, the vertex $b$ corresponds to the 
contraction of $E_1 \cup Q_{(\mathrm{II})}$. 
We denote by $\tilde{ E_1}  \subset X_{\mathrm{I}}  $ the proper transform of $E_1$ 
and by $\tilde x \subset X_{\mathrm{I}} $ the proper transform of $x$. 
Let $K ^2(C \times D) \to Z_{\mathrm{I}}$ be a contraction of $\bb P_{(\mathrm{I})}$. 
By the construction of $X_{\mathrm{I}}$, 
there exists a birational contraction morphism $\psi_{\mathrm{I}} : X_{\mathrm{I}} \to K^2 (C) $ such that 
the following diagram commutes. 
\[
\xymatrix{
X_{\mathrm{I}} \ar[dr]\ar[ddr]_{\psi_{\mathrm{I}} } &     &\ar@{-->}[ll]_(0.55){{\rm Mukai\ flop\ on}\ \bb P_{(\rm{I})}} K^2(C \times D)\ar[dl]\ar[ddl]^{\pi_1} \\
               & Z_{\mathrm{I}} \ar[d]& \\
             & K ^2(C) &\\
}
\]

In $X_{\mathrm{I}}$, the vertex $c$ corresponds to the contraction morphism $\psi_{\mathrm{I}}$. 
Hence it is enough to prove that $\tilde x$ is contracted by $\psi_{\mathrm{I}}$. 
Assume that $Z \in \tilde X_{\bb Z_3}$ is in the curve $x $. 
For each $Z \in x$, the support of $\mca L_Z$ is the line $\< p,q \>$ passing through $p \in C$ ($3p\neq 0$) and $q \in D$ 
($3q \neq 0$). 
Let $\xi \subset K^2(C \times D)$ be the proper transform of $\tilde x$. 
Then the curve $\xi  \subset K^2(C \times D)$ is given by: 
\[
\xi = \bigl\{ \{  (p,q_1), (p,q_2), (-2p,q) \} \in K^2(C \times D)  | q_1 +q_2 +q=0   \bigr\}.
\]
So $\tilde x$ is contracted by $\psi_{\mathrm{I}}$. 
\end{proof}


\section{Second example}

Let $M$ be the finite subset of $\mathit{PGL}(5)$ defined by
\begin{multline*}
M:=\Bigl\{ 
A:=
\begin{pmatrix}
a_0& & \\
&\ddots & \\ 
&&  a_5\\
\end{pmatrix}
\in \mathit{PGL}(5)  \Big|  
  A\mathrm{\ is \ a\ diagonal\ matrix,\ }  \\
  \#\{ i | a_i=1\} =3, \#\{ j | a_j=\zeta \} =3 \ \cdots \ (*)
\Bigr\}
\end{multline*}
Then $\# M =10$. 
Let $G$ be the finite group generated by the set $M$. 
We define $\tau _i(i=1, \cdots 4) \in G $ respectively  by: \vspace{5pt} 
\begin{eqnarray*}
	\tau _1 &:=& \{ a_0=a_1=a_2=1, a_3=a_4=a_5 =\zeta  \} \\
	\tau _2 &:=& \{ a_0=a_1=a_5=1, a_2=a_3=a_4 = \zeta \} \\
	\tau _3 &:=& \{ a_0= a_1=a_2=a_3= 1, a_4=\zeta ,a_5= \zeta ^2 \} \\
	\tau _4 &:=& \{ a_0=a_3=a_4=a_5 =1, a_1= \zeta , a_2=\zeta ^2\} . 
\end{eqnarray*}
Then $\tau_1, \tau_2, \tau_3$ and $\tau_4$ are generators of $G$. 
So $G$ is isomorphic to $\bb Z_3^{\oplus 4}$. 
Let $Y$ be a smooth cubic 4-fold of Fermat type: 
\[
Y:= \{ (z_0: \cdots : z_5 ) \in \bb P^5 | z_0^3 + \cdots +z_5^3 =0\} .
\]
Let $C$ be the Fermat type elliptic curve:$\{ (x:y:z) \in \bb P^2 | x^3+y^3 +z^3 =0 \}$. 
We will consider the induced $G$-action on $F(Y)$.
\begin{lem}
The $G$-action preserves the symplectic form $\sigma _{F(Y)}$ and $X_G=F(Y)/G$ has a crepant resolution $\tilde X_{G}$.
\end{lem}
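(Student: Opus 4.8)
The plan is to treat the two assertions separately, as in Lemma~\ref{crepant}. For the symplectic form I would invoke Lemma~\ref{criterion}: it suffices to show that every element of $G$ fixes the class $\mathit{Res}\frac{\Omega}{f_Y^2}$. Write a diagonal element of $G$ as $g=\mathrm{diag}(\zeta^{e_0},\dots,\zeta^{e_5})$ with $e_i\in\bb Z_3$. The Fermat polynomial $f_Y=z_0^3+\cdots+z_5^3$ is literally invariant, since $(\zeta^{e_i}z_i)^3=z_i^3$, while the Euler form $\Omega$ transforms by the determinant $\prod_i\zeta^{e_i}=\zeta^{\sum_i e_i}$; hence $g^*\bigl(\mathit{Res}\frac{\Omega}{f_Y^2}\bigr)=\zeta^{\sum_i e_i}\,\mathit{Res}\frac{\Omega}{f_Y^2}$. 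Each element of $M$ has three entries equal to $1$ and three equal to $\zeta$, so $\sum_i e_i\equiv 0\pmod 3$; as $\sum_i e_i$ is additive and $G=\langle M\rangle$, this holds for all of $G$. Thus $G$ fixes $\mathit{Res}\frac{\Omega}{f_Y^2}$, and Lemma~\ref{criterion} yields that $G$ preserves $\sigma_{F(Y)}$.

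For the resolution I would study the germ of $X_G$ analytically-locally. Since $G$ preserves $\sigma_{F(Y)}$, every fixed locus $\mathrm{Fix}_g(F(Y))$ is smooth and symplectic, hence even-dimensional, and the germ of $X_G$ at the image of a line $l$ is $\bb C^4/G_l$, where the stabilizer $G_l$ acts symplectically on $T_lF(Y)\cong\bb C^4$ and faithfully (a tangentially trivial automorphism of finite order would be trivial on the irreducible $F(Y)$). Because a diagonalizable abelian subgroup of $\mathrm{Sp}(4)$ has rank at most two, $G_l$ is trivial, $\bb Z_3$, or $\bb Z_3^2$. A faithful symplectic $\bb Z_3$ acts with weights $\frac13(0,0,1,2)$ or $\frac13(1,1,2,2)$: the first is a symplectic reflection giving an $A_2$ surface singularity times a smooth surface, exactly as in Lemma~\ref{crepant}, while the second is a terminal singularity admitting no crepant resolution. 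A faithful symplectic $\bb Z_3^2$ is conjugate into the maximal torus of $\mathrm{Sp}(4)$ and therefore acts as $\frac13(a,-a,b,-b)$ with $(a,b)$ running over $\bb Z_3^2$, so the two symplectic planes split off two independent $A_2$ factors and $\bb C^4/G_l\cong A_2\times A_2$. Consequently the only possible obstruction is a line whose \emph{entire} stabilizer is a single $\bb Z_3$ acting as $\frac13(1,1,2,2)$, and the heart of the proof is to exclude this.

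To carry this out I would record each $g$ as a vector in $(\bb Z_3)^6/\langle(1,\dots,1)\rangle$ and classify the nontrivial elements by their eigenvalue multiplicities on $\bb P^5$. The elements of $M$ (three $1$'s and three $\zeta$'s) are exactly the symplectic reflections; their fixed loci in $F(Y)$ are products of two Fermat elliptic curves and contribute only $A_2$ singularities, as in Lemma~\ref{crepant}. The remaining types, typified by $\tau_3=\mathrm{diag}(1,1,1,1,\zeta,\zeta^2)$, act with weights $\frac13(1,1,2,2)$ at isolated fixed lines, namely the lines of a Fermat cubic surface $Y\cap\bb P^3$; one then verifies directly that the stabilizer of such a line is always a rank-two group generated by reflections. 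For instance, the stabilizer of a line of $\{z_0+z_1=z_2+z_3=0\}$ is $\langle\tau_2,\tau_3\rangle\cong\bb Z_3^2$, generated by the reflections $\tau_2$ and $\tau_2\tau_3^2$, so the germ there is $A_2\times A_2$ rather than the forbidden $\frac13(1,1,2,2)$. The main obstacle is precisely this finite but delicate case check: one must confirm for every $l\in F(Y)$ that $G_l$ is generated by its symplectic reflections, so that no isolated $\frac13(1,1,2,2)$ point survives. Granting this, each local germ is $A_2\times\bb C^2$ or $A_2\times A_2$, whose relative minimal (hence crepant) resolutions are canonical and glue to a crepant resolution $\tilde X_G\to X_G$; alternatively one realizes it uniformly as the $G$-Hilbert scheme $G\mbox{-}\Hilb(F(Y))$, as in Lemma~\ref{mainlem}.
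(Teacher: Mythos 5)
Your verification of the first assertion is correct and is exactly what the paper leaves implicit behind its citation of Lemma \ref{criterion}: the Fermat polynomial is literally invariant and $\Omega$ scales by $\zeta^{\sum_i e_i}$, which is well defined modulo the overall scalar and vanishes mod $3$ on all of $G=\langle M\rangle$. For the second assertion your route is broadly the paper's (local symplectic linearization of the stabilizer, $A_2\times\bb C^2$ along the fixed surfaces, $A_2\times A_2$ at their intersections), but you are substantially more careful. The paper's own proof looks only at the ten elements $\alpha_i\in M$, notes $\mathrm{Fix}(\alpha_i)\cong C\times C$ and $\#\bigl(\mathrm{Fix}(\alpha_i)\cap\mathrm{Fix}(\alpha_j)\bigr)=9$, and stops; it never mentions the remaining $60$ nontrivial elements of $G$ (eigenvalue patterns $(4,1,1)$ and $(2,2,2)$), whose fixed loci on $F(Y)$ are isolated lines with local weights $\frac13(1,1,2,2)$. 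You correctly identify that if any such line had stabilizer exactly one $\bb Z_3$, the image point would be terminal and the lemma would fail, and your reduction (a faithful symplectic $\bb Z_3^2$ on $\bb C^4$ is automatically toral, hence $A_2\times A_2$, so only a lone $\frac13(1,1,2,2)$ cyclic stabilizer can obstruct) is sound. This is the real content of the second assertion and it is missing from the paper.

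The one genuine gap in your write-up is that you defer the resulting verification as a ``finite but delicate case check'' and only do one instance. In fact it closes uniformly. Every line fixed by a non-reflection element of $G$ lies in a coordinate $\bb P^3$ and is one of the $27$ lines of the Fermat cubic surface there, i.e.\ has the form $\{z_i+\omega z_j=z_k+\omega' z_m=z_a=z_b=0\}$ with $\omega^3=\omega'^3=1$ for some partition $\{i,j\},\{k,m\},\{a,b\}$ of the six indices. Its stabilizer is $\{g\in G:\ e_i=e_j,\ e_k=e_m\}\cong\bb Z_3^2$, which visibly contains the two independent reflections of $M$ whose $\zeta$-eigenspaces are the coordinates $\{k,m,a\}$ and $\{k,m,b\}$. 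Hence every such line is one of the $9$ points of some $\mathrm{Fix}(\alpha_i)\cap\mathrm{Fix}(\alpha_j)$, no isolated $\frac13(1,1,2,2)$ point survives, and the local germs are $A_2\times\bb C^2$ or $A_2\times A_2$ as you claim. With that paragraph supplied, your proof is complete, and it is more rigorous than the one printed in the paper.
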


\begin{proof}
By Lemma \ref{criterion}, $G$ preserves the symplectic form $\sigma _{F(Y)}$. 
We put $$\alpha _i = \mathrm{diag}(a_{0}^{(i)}, \cdots , a_{5}^{(i)}) \in M \ (i=1, \cdots ,10).$$ 
We may assume that $\alpha _1$ is $\tau_1$ and $\alpha _2$ is $\tau _2$. 
Each $\alpha _i$ generates the cyclic group of order $3$. 
Let $\mathrm{Fix(\alpha _i)}(i=1, \cdots ,10)$ be 
the fixed locus of the action $\langle \alpha _i  \rangle {}^{\curvearrowright }F(Y)$. 
Then $\mathrm{Fix}(\alpha _i) $ is isomorphic to $C \times C$
\begin{center}
\includegraphics[height=50mm,clip]{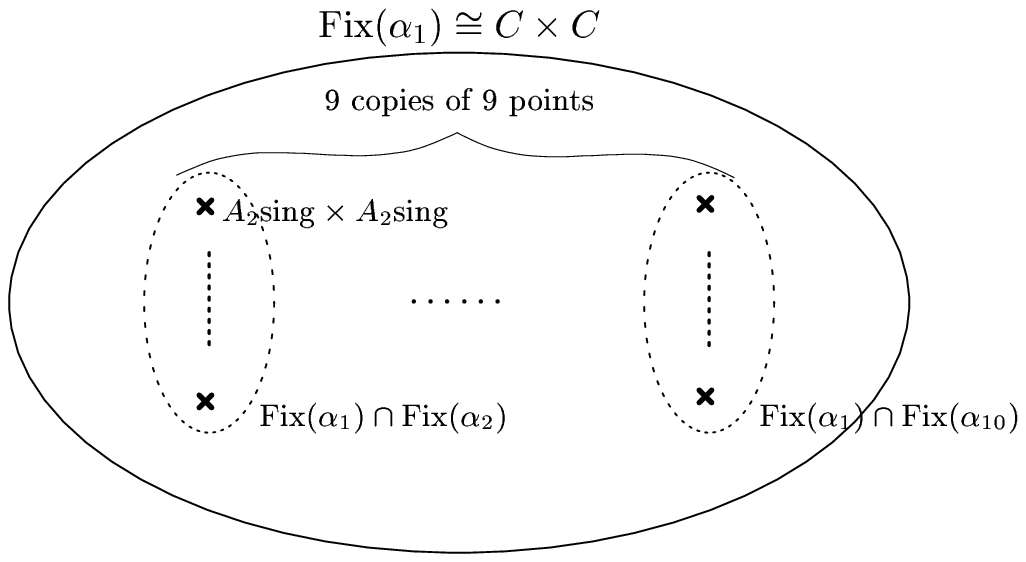}
\end{center}
Each intersection $\mathrm{Fix}(\alpha _i) \cap \mathrm{Fix } (\alpha _j)$($i \neq j$) consists of 9 points. 
Since the $G$-action preserves the symplectic form, singularities of $X_{G}$ is the product of two $A_2$ singularities 
at each point of $\mathrm{Fix}(\alpha _i) \cap \mathrm{Fix } (\alpha _j)$. 
So $X_{G}$ has a crepant resolution $\tilde{X_{G}}$. 
\end{proof}

Since $C$ has a complex multiplication, $\bb Z_ 3$ acts on $C$ in the following way: 
$$
\bb Z_3 \curvearrowright C  ,\  x \mapsto \zeta x .	
$$
We consider the following $\bb Z_3$ action on $C \times C$:
\[
\bb Z_3 {}^\curvearrowright C \times C ,\ (x,y ) \mapsto (\zeta x, \zeta ^2 y).
\]
Let $S$ be the minimal resolution of $C \times C / \bb Z_3$. 
Notice that $S$ is a K3 surface. 

\begin{thm}\label{mainthm3}
Notations being as above, 
$\tilde{X_{G}}$ is birational to $\Hilb ^2 (S)$
\end{thm}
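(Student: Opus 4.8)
The plan is to reduce the theorem to the first example and then to analyse the residual group action on a generalized Kummer variety. Write the Fermat polynomial as $f(z_0,z_1,z_2)+g(z_3,z_4,z_5)$ with $f=z_0^3+z_1^3+z_2^3$ and $g=z_3^3+z_4^3+z_5^3$, so that the two elliptic curves of Section 3 are both the Fermat curve $C$ and the element $\tau_1=\mathrm{diag}(1,1,1,\zeta,\zeta,\zeta)\in G$ is exactly the generator of the $\bb Z_3$ studied there. By Proposition \ref{mainprop} the crepant resolution of $F(Y)/\langle\tau_1\rangle$ is birational to $K^2(C\times C)$. Since $G\cong\bb Z_3^{\oplus4}$ is abelian, $X_G=F(Y)/G=(F(Y)/\langle\tau_1\rangle)/H$ with $H:=G/\langle\tau_1\rangle\cong\bb Z_3^{\oplus3}$ acting biregularly, and I would first reduce the theorem to
\[
\tilde X_G \ \sim_{\mathrm{bir}}\ \widetilde{K^2(C\times C)/H}.
\]

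The essential new point is to read off the $H$-action. By the construction in Proposition \ref{mainprop} an element $g\in G$ sends a triple $\{(p_i,q_i)\}$ to $\{(g|_{P_C}p_i,\,g|_{P_D}q_i)\}$, so I only need the restrictions of $\tau_2,\tau_3,\tau_4$ to $P_C$ and $P_D$. The maps $\tau_3|_{P_D}=\mathrm{diag}(1,\zeta,\zeta^2)$ and $\tau_4|_{P_C}=\mathrm{diag}(1,\zeta,\zeta^2)$ have distinct eigenvalues, hence act freely on the Fermat curve and are translations by $3$-torsion points; but $\tau_2|_{P_C}=\mathrm{diag}(1,1,\zeta)$ and $\tau_2|_{P_D}=\mathrm{diag}(1,1,\zeta^{-1})$ have a repeated eigenvalue and therefore fix the three points of $C$ lying on the corresponding coordinate line. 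Thus $\tau_2$ acts on $C\times C$ as the order-$3$ symplectic automorphism $\sigma:(x,y)\mapsto(\zeta x,\zeta^{2}y)$ --- precisely the automorphism whose resolved quotient is the K3 surface $S$. Since all $\tau_i$ commute (they are diagonal), the translations are by $\sigma$-fixed $3$-torsion, and $H\cong\langle\sigma\rangle\times T$ with $T\cong\bb Z_3^{\oplus2}$ a group of such translations. This is the geometric origin of $S$; note moreover that, because $1+\zeta+\zeta^2=0$, every orbit $\{b,\sigma b,\sigma^2 b\}$ has vanishing sum, so the fixed locus of $\sigma$ on $K^2(C\times C)$ contains a surface isomorphic to $(C\times C)/\sigma$, hence birational to $S$.

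It then remains to prove $\widetilde{K^2(C\times C)/(\langle\sigma\rangle\times T)}\sim_{\mathrm{bir}}\Hilb^2(S)$. Observe that $F(Y)$ is of K3-type while $K^2(C\times C)$ is of Kummer-type; exactly as the quotient in Section 3 turned K3-type into Kummer-type, here the classes created by resolving the fixed loci of $\sigma$ and $T$ are what restore the K3-type, and the fixed surface identified above shows where the K3 reappears. I would factor the quotient as $(K^2(C\times C)/T)/\langle\sigma\rangle$, identify the translation quotient $K^2(C\times C)/T$ with a Kummer-type model on the isogenous surface, and then match the resolved $\sigma$-quotient with $\Hilb^2(S)$, using that $\sigma$ is the defining CM automorphism of $S$ and that $T$ consists of $\sigma$-fixed $3$-torsion. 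The main obstacle is precisely this last identification: controlling the fixed loci of $\langle\sigma\rangle\times T$ on $K^2(C\times C)$, handling the combinatorics of their crepant resolution, and verifying that the outcome is $\Hilb^2(S)$ rather than $\Hilb^2$ of some isogenous K3. If a direct birational correspondence proves too delicate, I would instead show that $\tilde X_G$ is of K3-type, compute its Beauville--Bogomolov lattice and Hodge structure from the $G$-action through the Abel--Jacobi identification of Lemma \ref{Abel-Jacobi}, and conclude birationality with $\Hilb^2(S)$ by the Torelli theorem for irreducible symplectic manifolds.
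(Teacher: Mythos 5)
Your reduction to the first example and your analysis of the residual group $H=G/\langle\tau_1\rangle$ are correct and agree with the paper: the paper likewise passes to $\bar K^2(C\times D)/\langle\tau_2,\tau_3,\tau_4\rangle$ via Theorem \ref{mainthm}, and your identification of $\tau_2$ as the CM automorphism $(x,y)\mapsto(\zeta x,\zeta^2 y)$ and of $\tau_3,\tau_4$ as translations by $3$-torsion is right. The problem is that everything after that is a plan rather than a proof, and the step you yourself flag as ``the main obstacle'' is exactly the content of the theorem. Your proposed factorization $(K^2(C\times C)/T)/\langle\sigma\rangle$ does not obviously produce $\Hilb^2(S)$: the symmetric-square structure of the answer does not come from the fixed locus of $\sigma$ on the generalized Kummer (your observation that $\{b,\sigma b,\sigma^2 b\}$ gives a $\sigma$-fixed surface is true but is a side remark, not the mechanism), and ``match the resolved $\sigma$-quotient with $\Hilb^2(S)$'' is precisely what has to be demonstrated. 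The Torelli fallback would require computing the Beauville--Bogomolov lattice and Hodge structure of $\tilde X_G$, which you do not carry out and which is not easier than a direct argument.

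The paper closes this gap by an explicit, elementary manipulation that your proposal is missing. Work with the singular model $\bar K^2(C\times D)\cong (C\times C\times D\times D)/\mf S_3$, so that the symmetrization group $\mf S_3=\langle\sigma,\tau\rangle$ (with $\sigma:(x_1,x_2)\mapsto(x_2,-x_1-x_2)$ a $3$-cycle) is part of the group being quotiented by. One diagonalizes $\sigma$ by the isogeny $f=\bigl(\begin{smallmatrix}\zeta^2 & -1\\ -\zeta & 1\end{smallmatrix}\bigr)$ to get $\tilde\sigma=\mathrm{diag}(\zeta,\zeta^2)$, absorbs the translations $\tau_3,\tau_4$ into the isogeny, and then changes generators of $\langle\tilde\sigma,\tau_2\rangle$ to $\tau_2\circ\tilde\sigma$ and $\tau_2\circ\tilde\sigma^2$, which act as $(x_1,x_2,y_1,y_2)\mapsto(\zeta^2x_1,x_2,y_1,\zeta y_2)$ and $(x_1,\zeta x_2,\zeta^2 y_1,y_2)$. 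After regrouping the coordinates as $\bigl((x_1,y_2),(x_2,y_1)\bigr)$, these two generators act by the CM automorphism separately on the two copies of $C\times D$, while the transposition $\tilde\tau$ swaps the copies; the quotient is therefore $\mathrm{Sym}^2\bigl((C\times D)/\bb Z_3\bigr)$, which is birational to $\Hilb^2(S)$. The essential idea you are missing is that the $\mf S_3$ coming from the symmetric-product structure must be diagonalized \emph{together with} $\tau_2$ and then re-paired with the two $C\times D$ factors --- the quotient by $H$ alone, taken after resolving to $K^2(C\times C)$, hides this structure.
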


\begin{proof}
To distinguish two elliptic curves $\{(z_0:z_1:z_2) \in \bb P^2 |  z_0^3+ z_1^3+z_2^3=0\}$ and 
$\{ (z_3:z_4:z_5) \in \bb P^2 | z_3^3+z_4^3+z_5^3=0\}$, 
we call them $C$ and $D$ respectively. 
Let $\mf S_3$ be the symmetric group with degree 3. 
By Theorem \ref{mainthm} we have the following birational map: 
\begin{eqnarray*}
\tilde X_{G} \stackrel{\mathrm{birat.}}{\sim} F(Y)/\langle\tau_1, \tau_2,\tau_3 ,\tau_4 \rangle 
&\stackrel{\mathrm{birat.}}{\sim}& \bar K^2(C \times D) / \langle \tau_2,\tau_3 ,\tau_4\rangle \\
&=& \Big( (C \times C \times D \times D )/ \mf S_3 \Big) \Big/\langle\tau_2, \tau_3, \tau_4 \rangle .
\end{eqnarray*}
We will show that $\Big((C \times D)^2 / \mf S_3 \Big) \Big/\langle\tau_2, \tau_3, \tau_4 \rangle$ is birational to 
$\Hilb ^2(S)$. 
Let $x_0$(resp. $y_0$) be a 3-torsion point of $C$(resp. $D$) such that $\zeta x_0 =x_0$. 
The induced actions of $\tau_2, \tau_3$ and $\tau_4$ on $(C \times D)^2$ are give by 
\begin{eqnarray*}
\tau_2 {}^{\curvearrowright} C \times C \times D \times D \ni (x_1,x_2,y_1,y_2) &\mapsto & (\zeta x_1,\zeta x_2,\zeta ^2y_1,\zeta ^2y_2)  \\
\tau_3 {}^{\curvearrowright} C \times C \times D \times D \ni (x_1,x_2,y_1,y_2) &\mapsto & (x_1,x_2,y_1+y_0,y_2+y_0) \\
\tau_4 {}^{\curvearrowright} C \times C \times D \times D \ni (x_1,x_2,y_1,y_2) &\mapsto & (x_1+x_0,x_2+x_0,y_1,y_2).	 
\end{eqnarray*}
Here $\mf S_3$ is generated by  
\[
\mf S_3 = \langle \sigma ,\tau |  \sigma ^3=1, \tau ^2=1, \sigma \tau = \tau \sigma ^2 \rangle ,
\]
and the action $\mf S_3$ on $C \times C \times D \times D$ is given by 
\begin{eqnarray*}
\sigma {}^{\curvearrowright} C \times C \times D \times D \ni (x_1,x_2,y_1,y_2) &\mapsto & (x_2,-x_1-x_2,y_2,-y_1-y_2) \\
\tau    {}^{\curvearrowright} C \times C \times D \times D \ni (x_1,x_2,y_1,y_2) &\mapsto & (x_2,x_1, y_2, y_1). 
\end{eqnarray*}

Let us diagonalize the actions $\sigma {}^{\curvearrowright} C \times C $. 
We have the following diagram:
\[
\begin{CD}
C \times C @>f>> C \times C \\ 
@V\sigma VV @VV\tilde \sigma V \\
C \times C @>f>> C \times C, 
\end{CD}
\]
where the matrix representations of $\sigma ,f $ and $\tilde \sigma $ are respectively 
\begin{eqnarray*} 
\sigma = \begin{pmatrix} 0&1\\ -1 & -1 \end{pmatrix}, \ 
f =\begin{pmatrix} \zeta ^2 & -1 \\ -\zeta &1  \end{pmatrix},\ 
\tilde \sigma = \begin{pmatrix} \zeta  & 0  \\ 0 & \zeta ^2 \end{pmatrix}.
\end{eqnarray*}
For $\tau$, we have the following diagram:
\[
\begin{CD}
C \times C @>f>> C \times C \\ 
@V\tau VV @VV\tilde \tau V \\
C \times C @>f>> C \times C, 
\end{CD}
\]
where the matrix representations of $\tau $ and $\tilde \tau $ are respectively
\[
\tau = \begin{pmatrix} 0&1\\ 1& 0 \end{pmatrix},\ \tilde \tau = \begin{pmatrix} 0 & \zeta \\ \zeta ^2 &0 \end{pmatrix}.
\]

We remark that $(x_1,x_2) \in C \times C$ is in the kernel of $f$ if and only if $\zeta ^2 x_1=x_2, \zeta x_1=x_2$.
So $\Ker f$ is $\{ (0,0), (x_0,x_0), (2x_0,2x_0) \}$. 
In same way for $D$, the action $\sigma {}^{\curvearrowright} D \times D$ can be diagonalized. 
So we have 
\[
\Big( ( C \times C \times D \times D )/ \< \sigma ,\tau  \> \Big) / \< \tau_2, \tau_3,\tau_4 \>
= \Big( (C \times C \times D \times D) / \< \tau_3,\tau_4 \> \Big) / \< \tau_2, \tilde \sigma , \tilde \tau \>.
\]
Hence 
\[
\Big( (C ^2\times D^2)/\mf S_3\Big)\Big/\langle \tau_2,\tau_3,\tau_4 \rangle
  \cong
 (C \times C \times D \times D )/\langle \tilde \sigma, \tilde \tau ,\tau_2\rangle  .
\]
Since  $\langle \tilde \sigma ,\tau_2 \rangle $ is a normal subgroup of $\langle \tilde \sigma, \tilde \tau ,\tau_2\rangle $, we have 
\[
(C \times C \times D \times D )/\langle \tilde \sigma, \tilde \tau ,\tau_2\rangle 
\cong \Big( (C \times C \times D \times D )/\langle \tilde \sigma ,\tau_2 \rangle  \Big) \Big/ \langle \tilde \tau \rangle . 
\]
We can change generators of $\langle \tilde \sigma ,\tau_2 \rangle $ from $\{ \tilde \sigma , \tau_2\}$ to 
$\{ \tau_2 \circ \tilde \sigma , \tau_2 \circ \tilde \sigma ^2  \}$. Each action of $ \tau_2 \circ \tilde \sigma$ and $\tau_2 \circ \tilde \sigma ^2$
are respectively given by 
\begin{eqnarray*}
\tau_2 \circ \tilde \sigma {}^{\curvearrowright } C \times C \times D \times D \ni (x_1,x_2,y_1,y_2) & \mapsto & (\zeta ^2 x_1,  x_2,  y_1, \zeta  y_2) \\
\tau_2 \circ \tilde \sigma {}^{\curvearrowright } C \times C \times D \times D \ni (x_1,x_2,y_1,y_2) & \mapsto & ( x_1, \zeta  x_2, \zeta ^2 y_1, y_2) .
\end{eqnarray*}
We identify $C \times C \times D \times D $ with $(C \times D) \times (C \times D)$ in the following way;
\begin{eqnarray*}
C \times C \times D \times D &\leftrightarrow &  (C \times D) \times (C \times D)\\
 (x_1,x_2,y_1,y_2)                  &\leftrightarrow & \big((x_1,y_2),(x_2,y_1)\big) .
\end{eqnarray*}
The action of $\tilde \tau $ on $(C \times D) \times (C \times D)$ is given by 
\[
\tilde \tau {}^{\curvearrowright} (C \times D) \times (C \times D) \ni (x_1,y_2,x_2,y_1) \mapsto (\zeta ^2x_2 , \zeta y_1, \zeta x_1, \zeta ^2 y_1) .
\]
So we have 
\begin{eqnarray*}
\Big( (C \times C \times D \times D )/\langle \tilde \sigma ,\tau_2 \rangle  \Big) \Big/ \langle \tilde \tau \rangle 
& \cong &  \mathrm{Sym}^2\Big( (C \times D) / \bb Z_3\Big).
\end{eqnarray*}
Here the $\bb Z_3 $ action is the same one as in the first part of this section. So we get the conclusion. 
\end{proof}

\end{document}